\documentclass[pdflatex,sn-mathphys-num]{sn-jnl}

\usepackage{graphicx}%
\usepackage{multirow}%
\usepackage{amsmath,amssymb,amsfonts}%
\usepackage{amsthm}%
\usepackage{mathrsfs}%
\usepackage[title]{appendix}%
\usepackage{xcolor}%
\usepackage{textcomp}%
\usepackage{manyfoot}%
\usepackage{booktabs}%
\usepackage{algorithm}%
\usepackage{algorithmicx}%
\usepackage{algpseudocode}%
\usepackage{listings}%
\usepackage[style=numeric]{biblatex}
\newtheorem{theorem}{Theorem}[section]
\newtheorem{definition}{Definition}[section]
\newtheorem{lemma}[theorem]{Lemma}
\newtheorem{corollary}[theorem]{Corollary}
\newtheorem{Proposition}[theorem]{Proposition}
\newtheorem{remark}[theorem]{Remark}

\begin{document}

\title[Dynamical stability of various convex graphical translators]{Dynamical stability of various convex graphical translators}


\author*[1]{\fnm{Junyoung} \sur{Park}}\email{jp2453@math.rutgers.edu}

\affil*[1]{\orgdiv{Department of Mathematics}, \orgname{Rutgers University}, \orgaddress{\street{110 Frelinghuysen
Road}, \city{Piscataway}, \postcode{08854-8019}, \state{NJ}, \country{USA}}}


\abstract{In the first part of the paper, we prove the existence of longtime solution to mean curvature flow starting from a graph of a continuous function defined over a slab. Then, we establish dynamical stability results for various types of graphical translators to mean curvature flow, namely the grim reaper, two dimensional graphical translators, and asymptotically cylindrical translators. }

\keywords{Mean curvature flow, graphcial translators, dynamical stability}



\maketitle
\tableofcontents
\section{\centering {Introduction}}
A smooth one parameter family of hypersurfaces $\{M_t\}_{t \in I}$ evolves by mean curvature flow if the normal velocity is equal to the mean curvature vector, i.e
\begin{equation}
    \langle \frac{\partial X}{\partial t}, \nu \rangle = \langle \Vec{H}, \nu \rangle,
\end{equation}
where $\Vec{H}$ is the mean curvature vector, and $\nu$ is the unit normal vector. Since the seminal work of Brakke \cite{brakke}, mean curvature flow has been extensively studied during the past several decades. One way to understand how mean curvature flow deforms hypersurfaces is to look at some special type of hypersurfaces which reduces the analysis. For example, one can consider graphical hypersurfaces over a given domain $\Omega \subset \mathbf{R}^n$. \\

Ecker and Huisken \cite{eckerhuisken}, \cite{Ecker1991InteriorEF} showed by maximum principle arguments that when the initial hypersurface $M_0$ is an entire graph of a locally Lipchitz function, then there exists a longtime smooth solution to mean curvature flow starting from $M_0$ which its time slices are also entire graphs. Furthermore, they showed that if $M_0$ is `straight at infinity', then after suitable rescaling, the hypersurfaces converge to a self expander, a special type of hypersurfaces which move by expansion under mean curvature flow. Later, Clutterbuck \cite{Clutterbuck2005ParabolicEW} (see also \cite{bowlstability}) relaxed local Lipchitz continuity to just continuity. Uniqueness of entire graph solutions starting from locally Lipchitz, proper graph was proved by Daskalopolous and S\'aez \cite{Daskalopoulos2021UniquenessOE}. \\

In the case when $\Omega \subset \mathbf{R}^n$ is a general domain, S\'aez and Schn\" urer \cite{graphicalmcfoverdomain} showed the existence of solutions to mean curvature flow when the initial data is a graph of a locally Lipchitz, proper function over $\Omega$. They further showed that such solutions remain graphical over evolving domain. \\

Our first result is the longtime existence of a graphical solution to mean curvature flow starting from a graph of a continuous function defined over $\Omega^n_b = \mathbf{R}^{n-1}\times (-b,b)$ for some $b > 0$, analogous to results in \cite{eckerhuisken}, \cite{Clutterbuck2005ParabolicEW}.  
\begin{theorem}\label{existence of graphical solution rough statement intro}
   For each $b > 0$, $n \geq 1$, let $\Omega^n_b = \mathbf{R}^{n-1} \times (-b,b)$. For given $u_0 \in C^0(\Omega^n_b)$, there exists $u \in C^{\infty}(\Omega^n_b \times (0, \infty)) \cap C^0(\Omega^n_b \times [0, \infty))$ so that $u$ solves the initial value problem
    \begin{equation}
        \begin{cases}
         \frac{\partial u}{\partial t} = \sqrt{1 + |Du|^2}\textit{\emph{div}}(\frac{Du}{\sqrt{1 + |Du|^2}}) \text{ in }\Omega^n_b \times (0,\infty) \\
         u(x, 0) = u_0 \text{ in }\Omega^n_b
        \end{cases}
    \end{equation}
    If we further assume that for each $J > 0$, one can find $\delta > 0$ so that $|u_0(x)| > J$ for all $x \in \Omega^n_b$ with $b - \delta < |x_n| < b$, then we can find a solution $u$ so that each time slice also has the same property. In particular, the graphs of $u(\cdot, t)$ are complete, graphical hypersurfaces in $\mathbf{R}^{n+1}$ moving by mean curvature.
\end{theorem}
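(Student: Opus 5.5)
The plan is to construct $u$ as a limit of solutions of Dirichlet problems on an exhaustion of $\Omega^n_b$, following the Ecker--Huisken/Clutterbuck strategy.

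\textbf{Approximation.} Take bounded smooth convex domains $\Omega_k \nearrow \Omega^n_b$, for instance $\Omega_k = B_k^{n-1}\times(-b+1/k,\,b-1/k)$ with the corners smoothed. Take smooth functions $u_{0,k}$ on $\overline{\Omega_k}$ that converge to $u_0$ locally uniformly. On $\Omega_k\times(0,\infty)$, solve the graphical mean curvature flow equation with initial data $u_{0,k}$ and boundary data $u_{0,k}|_{\partial\Omega_k}$; this boundary data is constant in time, possibly after a cutoff modification so that compatibility holds. Longtime solvability follows from the standard theory of Huisken and Lieberman for mean convex boundaries: the maximum principle bounds $u_k$, barriers control the boundary gradient, and the interior gradient bound comes from the evolution of $v=\sqrt{1+|Du|^2}$.

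\textbf{Uniform local estimates.} Fix a compact set $K\times[0,T]\subset \Omega^n_b\times[0,\infty)$. I would obtain three bounds, uniform in $k$.
\begin{enumerate}
\item A local height bound, by comparison with shrinking spheres. Spheres lying above the graph of $u_0$ near a point remain above for a definite time, and this also gives a modulus of continuity at $t=0$, as in Clutterbuck.
\item Ecker--Huisken interior gradient estimates, which depend only on the local height oscillation on a slightly larger cylinder. Here the time weight $t$ in the estimate handles the merely continuous initial data.
\item Interior curvature estimates and their derivatives, which follow from (2).
\end{enumerate}
Arzelà--Ascoli and a diagonal argument then give $u_k\to u$ in $C^\infty_{loc}(\Omega^n_b\times(0,\infty))$. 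Continuity up to $t=0$ comes from the uniform modulus of continuity supplied by the sphere barriers.

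\textbf{Boundary blow-up.} Now assume $|u_0|>J$ whenever $b-\delta<|x_n|<b$. Since $u_0$ is continuous on the connected strips near each boundary component $\{x_n=\pm b\}$, it tends to $+\infty$ or $-\infty$ there, with a fixed sign on each strip. Consider the $+\infty$ case near $x_n=b$. I need a lower barrier showing that $u(\cdot,t)\to+\infty$ as $x_n\to b$, locally uniformly in $t$.

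My first choice of barrier is a translated grim reaper cylinder. Take
\[
w(x,t)=\frac{c}{\lambda}\,\text{or a scaled form of } -\lambda^{-1}\log\cos\bigl(\lambda(x_n-x^*)\bigr)+\lambda t + c,
\]
that is, a grim reaper of width $\pi/\lambda<\delta$ centred inside the strip $b-\delta<x_n<b$. Such a barrier blows up at the edges of its own slab and moves upward. Since it translates upward, it should be placed with a very negative additive constant; for fixed $t\le T$ it is a subsolution. On its slab it lies below $u_0$ once $J$ is large, and it sits below $u_k$ on $\partial\Omega_k$ for large $k$. Comparison then gives $u\ge w$.

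This barrier alone only bounds $u$ from below by about $J-C(T)$ on a sub-strip. It does not yet give blow-up exactly at $x_n=b$. To get that, I would use the family of strips $b-\delta(J)<x_n<b$ with $J\to\infty$ and $\delta(J)\to0$, applying the comparison on each. Because the grim reaper's downward drift is bounded by $\lambda T$ while $J\to\infty$, the resulting bound is $u(x,t)\ge J-C(\delta(J),T)$. The catch is that $\lambda\sim 1/\delta$, so the drift is not uniformly controlled. I would therefore fix $\lambda$ and use instead the $\mathbf{R}^{n-1}$-invariant comparison with the one-dimensional flow of the profile, or with tilted grim reapers, to guarantee that $J-C\to\infty$.

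\textbf{Main obstacle.} I expect this step to be the main obstacle. The boundary blow-up must be preserved for all times, with barriers compatible with the approximating Dirichlet problems; the noncompactness in the $\mathbf{R}^{n-1}$ directions must be handled; and the barriers must be chosen with only continuity of $u_0$ available. Completeness of the graphs then follows, because $|u|\to\infty$ at $\partial\Omega^n_b$ makes each time slice a properly embedded graph.
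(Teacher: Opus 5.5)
Your proposal has two genuine gaps: the local height bound only holds for short time, and the boundary barrier you propose cannot lie below $u_0$.

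\textbf{The height bound in step (1).} Shrinking spheres cannot give a height bound on $K\times[0,T]$ for arbitrary $T$. A sphere used as a barrier must project into the slab, so its radius satisfies $\rho<b$, and it becomes extinct by time $\rho^2/(2n)$. Restarting spheres does not help. Each restart needs the height bound on a whole ball of radius $\rho$ around the new centre, so the region where you control $u$ shrinks by $\rho$ per step. Reaching time $T$ takes at least $2nT/\rho^2$ steps, a total loss of at least $2nT/\rho>2nT/b$ in the $x_n$-direction. That exceeds $b$ once $T\ge b^2/(2n)$.

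The maximum principle gives nothing uniform in $k$, because $u_0$ may be unbounded near $\partial\Omega^n_b$. Your steps (2) and (3) consume height bounds on space-time cylinders, so as written you get uniform estimates, and hence a limit, only for short time.

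The missing idea is a compact barrier that fits inside a slab of prescribed width and survives for arbitrarily long time. The paper uses the rescaled ancient pancakes $\Sigma^{n,\lambda}_t$ of Bourni--Langford--Tinaglia. These lie in $\{|x_n|<\lambda\}$ and have length $r_l(t)\approx-\pi t/(2\lambda)$. One such pancake is started at time $-T_0-2T$, centred over $x_0\in K_{r,2\lambda}$, and shifted by $\pm\bigl(\|u_0\|_{C^0(K_{r+R,\lambda})}+r_l(-T_0-2T,\lambda)+\tfrac12\bigr)e_{n+1}$. It stays disjoint from the graph up to time $T$, which gives
\[
\|u\|_{C^0(K_{r,2\lambda}\times[0,T])}\le\|u_0\|_{C^0(K_{r+R,\lambda})}+Q(n,\lambda,T).
\]
With this bound in hand, the paper uses a gradient bound that needs only the height at an earlier time (Colding--Minicozzi's), then Ecker--Huisken's higher-order estimates. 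Your Dirichlet exhaustion is an acceptable substitute for the paper's entire-graph approximations $\eta_j*(u_0\phi_j)$ restricted to the slab. Your sphere argument for continuity at $t=0$ matches the paper.

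\textbf{The boundary barrier.} An upward-translating grim reaper whose strip of width $\pi/\lambda$ lies inside $\{b-\delta<x_n<b\}$ tends to $+\infty$ at the edges of its own strip. There $u_0$ is finite, so this function can never lie below $u_0$, however negative the additive constant. Your fallback suggestions are not worked out.

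The barrier that works is a downward-moving concave grim reaper cylinder centred outside the slab:
\[
w(x,t)=\ln\cos\bigl(x_n-(b+\tfrac{\pi}{2}-\delta)\bigr)-\ln\cos(\tfrac{\pi}{2}-\delta)+2J+T-t,\qquad b-\delta<x_n<b+\pi-\delta .
\]
It has three useful properties:
\begin{enumerate}
\item it is at most $2J+T$ on $b-\delta<x_n\le b$;
\item it is bounded above everywhere by $-\ln\cos(\tfrac{\pi}{2}-\delta)+2J+T$;
\item at $x_n=b$ it equals $2J+T-t\ge 2J$ for $t\le T$.
\end{enumerate}
In the paper it lies below the entire-graph solutions with data $\min\{u_0,j\}*\eta_j$, which are huge beyond the slab. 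In your setting it lies below $u_k$ on the parabolic boundary of $\Omega_k\cap\{x_n>b-\delta\}$, provided your $u_{0,k}$ inherit the lower bound $u_0>2J+T$ near $x_n=b$; the time-independent Dirichlet data then handle the lateral boundary. Continuity of $w$ at $x_n=b$ then gives $u\ge J$ on $\{b-|x_n|<\tilde\delta\}\times[0,T]$, with $\tilde\delta$ independent of $k$, and this passes to the limit.
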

\begin{remark}
    Compared to results in \cite{graphicalmcfoverdomain}, although we are only working with $\mathbf{R}^{n-1}\times (-b,b)$ instead of general domains, our result relaxes the local Lipchitz continuity to just continuity. We are also assuming a slightly weaker condition on $u_0$ than it being a proper function in view of the case when the graph of $u_0$ is a grim reaper plane.
\end{remark}

If we move away from the graphical case,  the flow in general develops singularity in finite time, and cannot be continued within the smooth category. \\

To understand what goes wrong, one has to analyze how singularities are formed. A typical approach is via a blowup analysis. As one approaches singular time, one `magnifies' the flow near the singularity to obtain an approximate picture which models the singularity formation. These approximate pictures are called blowup limits. \\ 

It turns out that by choosing the blowup sequence in a particular way, these limit flows are self similar. For example, Huisken \cite{Huisken1990AsymptoticbehaviorFS} showed that type I singularity formation can be modeled on self shrinking solutions. On the other hand, by following Hamilton's blowup procedure in \cite{hamilton1995formation}, Huisken and Sinestrari \cite{huisken1999mean}, \cite{huisken1999convexity} showed that type II singularity formation of mean convex mean curvature flow can be modeled on translating solutions.\\

In this paper, we will be interested in graphical translators, namely hypersurfaces $M \subset \mathbf{R}^{n+1}$ that are graphs of some function over some domain $\Omega \subset \mathbf{R}^n$, and satisfies
\begin{equation}\label{translator equation}
    \langle \Vec{H}, \nu\rangle  = \langle \nu, e_{n+1}\rangle 
\end{equation}
where $\Vec{H}$ is the mean curvature vector, $\nu$ is the unit normal vector, and $e_{n+1} = (0,0,..,1) \in \mathbf{R}^{n+1}$ is the standard $n+1^{th}$ unit vector in $\mathbf{R}^{n+1}$. These are precisely the hypersurfaces which generate translating solutions to mean curvature flow by the formula 
\begin{equation}
    \mathbf{R}\ni t \to M + te_{n+1}.
\end{equation}
 Thus, in view of previous discussions, translators model type II singularity formation, and hence have been one of the central objects in the study of mean curvature flow. \\

In this paper, we focus on the dynamical stability of various graphical translators. Stability of solitons is a widely studied subject, and tell us how `rigid' the soliton is under perturbations. Stability is usually studied in two different contexts, linear stability and dynamical stability. In the setting of translators, Ilmanen \cite{ilmanen1994elliptic} showed that translators are minimal hypersurfaces in Euclidean space under the conformal metric
\begin{equation}
    g_{ij} = e^{\frac{2}{n}x_{n+1}}\delta_{ij}.
\end{equation}
Therefore, linear stability of translators refers to the non-negativity of the Jacobi operator
\begin{equation}
    Lf = -(\Delta f + \langle e_{n+1}, \nabla f\rangle  + |A|^2).
\end{equation}
In \cite{Xin2015-ei}, Xin showed that all graphical translators are area minimizing, hence linearly stable.\\

Dynamical stability addresses the following question.
\begin{itemize}
    \item Given a graphical translator, will a `small perturbation' of that translator converge back to the initial translator under mean curvature flow?
\end{itemize}
In this paper, we prove dynamical stability results for the grim reaper, two dimensional graphical translators, and asymptotically cylindrical translators.\\

We first look at the one dimensional mean curvature flow, also called curve shortening flow. Mullins \cite{mullins1956two} introduced a graphical translator in $\mathbf{R}^2$, which is now called the grim reaper. It turns out that this is the unique one dimensional translator up to rigid motion (see \cite{Halldorsson2010SelfsimilarST}). Wang and Wo \cite{WANG_WO_2011} proved a dynamical stability result for grim reaper.\\

We generalize the result of Wang and Wo \cite{WANG_WO_2011} by weakening the assumptions on the perturbation.
\begin{theorem}\label{dynamical stability of grim reaper intro}
Let $\overline{M} = \text{Graph}(\overline{u})$ be the grim reaper. If $u_0 : (-\frac{\pi}{2}, \frac{\pi}{2}) \to \mathbf{R}$ satisfies
\begin{equation}
   \|u_0(x) - \overline{u}(x)\|_{C^0(-\frac{\pi}{2}, \frac{\pi}{2})} < \infty,
\end{equation}
and the graph of $u_0$ has bounded curvature and finite total curvature, then there exists a longtime solution $u$ to graphical curve shortening flow starting from $u_0$, so that
\begin{equation}
    \lim_{t \to \infty}u(x,t) - t = \overline{u}(x) + c_0 \text{ in }C^{\infty}_{loc}(-\frac{\pi}{2}, \frac{\pi}{2}),
\end{equation}
where 
\begin{equation}
    c_0 = \frac{1}{\pi}\int_{-\frac{\pi}{2}}^{\frac{\pi}{2}}u_0(x) - \overline{u}(x)dx.
\end{equation}
\end{theorem}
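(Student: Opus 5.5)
We construct the flow with Theorem \ref{existence of graphical solution rough statement intro} (case $n=1$, $b=\pi/2$). The grim reaper here is $\overline{u}(x) = -\log\cos x$, which blows up at $x=\pm\pi/2$. Since $u_0-\overline{u}$ is bounded, $u_0$ also tends to $+\infty$ at the endpoints, so the properness hypothesis of that theorem holds. We therefore get a smooth longtime graphical solution $u$ whose time slices are complete graphs over $(-\frac{\pi}{2},\frac{\pi}{2})$.

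The first step is a $C^0$ bound. Set $K=\|u_0-\overline{u}\|_{C^0}$. The functions $\overline{u}(x)+t\pm K$ are exact solutions (translating grim reapers) lying above and below $u_0$ at $t=0$. The comparison principle for complete graphs over the slab then gives $|u(x,t)-t-\overline{u}(x)|\le K$ for all $t$. This comparison should be available from the construction in Theorem \ref{existence of graphical solution rough statement intro}: obtain $u$ as a limit of solutions on exhausting subdomains with large boundary data, and compare at that level.

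The second step gives regularity and a limit. Write $w=u-t-\overline{u}$, so $w$ is bounded. Interior gradient and curvature estimates of Ecker--Huisken type then give local $C^\infty$ bounds on $w$ that are uniform in $t$. The bounded and finite total curvature hypotheses are used to control the ends: curvature bounds propagate in time, and total curvature $\int|k|\,ds$ is nonincreasing along curve shortening flow. Together these keep the curves uniformly close to grim reapers near $\pm\pi/2$, and they justify integrating by parts without boundary contributions.

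The third step identifies the constant. Define the conserved quantity $I(t)=\int_{-\pi/2}^{\pi/2} w(x,t)\,dx$. By the graph equation, $u_t=u_{xx}/(1+u_x^2)=\partial_x\arctan u_x$. Hence
\[
\frac{d}{dt}\int (u-t)\,dx=\Big[\arctan u_x\Big]_{-\pi/2}^{\pi/2}-\pi .
\]
This is zero provided $u_x\to\pm\infty$ at the ends, which the asymptotic control gives. Since $\int\overline{u}$ is constant, we get $I(t)=\pi c_0$. This is a genuine conservation law: the area between the graph and the translating grim reaper is preserved.

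The fourth step, convergence, is the main obstacle. I would use a monotone Lyapunov functional. In the angle variable $\theta=\arctan u_x$, the function $v=\theta-\arctan\overline{u}_x$ satisfies a linear-type parabolic equation. Alternatively, one can use that $\int|w_x|^2$-type energies decay. The most robust route is to use the oscillation $\operatorname{osc}\,w(\cdot,t)$ instead. Both $w$ and $0$, seen as solutions of the translating equation, are comparable. The strong maximum principle together with compactness (translate in time and extract subsequential limits $w_\infty$) shows that any $\omega$-limit is an eternal solution with constant oscillation, hence a constant. The delicate point is preventing oscillation from escaping to the ends, since the domain degenerates there. I would handle it with barriers built from shifted grim reapers of slightly different widths near $\pm\pi/2$, exploiting the $C^0$ bound. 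Once every subsequential limit is a constant $c$, the conservation from the third step passes to the limit by dominated convergence ($|w|\le K$), which forces $c=c_0$. This yields the claimed $C^\infty_{loc}$ convergence.
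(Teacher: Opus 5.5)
Your existence step, the bound $|u-t-\overline{u}|\le K$ from translating grim reapers, the uniform local estimates, and the conservation of $\int(u-t-\overline{u})\,dx$ via $u_t=\partial_x\arctan u_x$ all match the paper. So does the final identification $c=c_0$ by dominated convergence.

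\textbf{The gap is in your step four.} The oscillation mechanism cannot work. It is true that $\operatorname{osc}w(\cdot,t)$ over $(-\frac{\pi}{2},\frac{\pi}{2})$ is nonincreasing. But it is not continuous under $C^\infty_{loc}$ convergence, so an $\omega$-limit need not inherit $\lim_{t\to\infty}\operatorname{osc}w(\cdot,t)$, and in general it does not.

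To see why, linearize at the grim reaper: $w_t=(\cos^2x\,w_x)_x$. In the height variable $y=-\ln\cos x$ near an end this becomes $w_t=w_{yy}-w_y$, which is diffusion with drift toward the ends. Suppose $u_0-\overline{u}\to a$ at $x=\frac{\pi}{2}$ and $u_0-\overline{u}\to b$ at $x=-\frac{\pi}{2}$, with $a\neq b$. These far-field values are carried outward but persist, so $\operatorname{osc}w(\cdot,t)\ge|a-b|$ for every $t$, while the local limit is the constant $c_0$. The oscillation really does escape to the ends. No choice of shifted or rescaled grim-reaper barriers can prevent this, and the limit never attains its supremum at an interior point, so the strong maximum principle has nothing to act on.

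\textbf{What is missing is a rigidity statement for the eternal limit flow.} The limit $u^\infty$ satisfies $|u^\infty-t-\overline{u}|\le K$, and you need to show it is a translator. The paper gets this from the total curvature, which you only use to control the ends. The argument runs as follows.
\begin{enumerate}
\item $I(t)=\int_{M_t}|\kappa|\,ds$ is nonincreasing, so it converges to some $I_0$.
\item A nondegenerate inflection point of $M^\infty_{t_0}$ would force $I$ to drop by a definite amount on $[t_j+t_0-\delta,\,t_j+t_0+\delta]$ for all large $j$. This contradicts $I(t)\to I_0$.
\item Hence every inflection point of the limit is degenerate, and the zero-set theory for the curvature makes $M^\infty_t$ convex; the strong maximum principle then makes it strictly convex.
\item Hamilton's Harnack inequality, applied with $V=Hv\,e_2^{tan}$, gives $\partial_t^2u^\infty\ge 0$.
\item Together with the two-sided bound $|u^\infty-t-\overline{u}|\le K$, this forces $\partial_tu^\infty\equiv 1$.
\item Uniqueness of one-dimensional translators then gives $u^\infty=\overline{u}+t+c$.
\end{enumerate}
Without convexity plus Harnack, or some other Liouville theorem for bounded eternal solutions trapped between two translating grim reapers, your step four does not go through.
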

\begin{remark}\label{usual c2 closeness grim reaper}
Wang and Wo \cite{WANG_WO_2011} assumed that $\|u_0 - \overline{u}\|_{C^{2 + \alpha}(-\frac{\pi}{2}, \frac{\pi}{2})} < \infty$, and $M_0$ has finitely many inflection points. On the other hand, in theorem \ref{dynamical stability of grim reaper intro}, we do not impose any form of convexity assumption. Also, one can easily see that finite curvature condition follows from 
    \begin{equation}
        \|u_0 - \overline{u}\|_{C^{2}(-\frac{\pi}{2}, \frac{\pi}{2})} < \infty.
    \end{equation}
    Thus our assumptions are indeed weaker than the $C^{2 + \alpha}$ closeness condition in \cite{WANG_WO_2011}.
\end{remark}
\begin{remark}\label{no improvement in grim reaper case}
    Theorem \ref{dynamical stability of grim reaper intro} implies that one cannot obtain an analogous result to theorem 1.1 in \cite{bowlstability} for grim reapers. Even if $u_0 \in C^{\infty}(-\frac{\pi}{2}, \frac{\pi}{2})$ is \textbf{identical} to $\overline{u}$ for $x \notin (-\epsilon, \epsilon)$ for any small $\epsilon > 0$, if $u_0 > \overline{u}$ in $(-\epsilon, \epsilon)$, then $c_0 > 0$ in theorem \ref{dynamical stability of grim reaper intro}, hence we do not have
    convergence to $\overline{M} + te_{n+1} = \text{Graph}(\overline{u} + t)$, but instead to its vertical translate. 
\end{remark} 

We now consider surfaces in $\mathbf{R}^3$. Altschuler and Wu \cite{altschuler1994translating} showed the existence of bowl soliton in $\mathbf{R}^3$, which is a rotationally symmetric, graphical translator. Hoffman et.al \cite{Hoffman2018GraphicalTF} constructed a one parameter family of strictly convex, graphical translators defined over a slab. In the same paper, by building on previous work of Spruck and Xiao \cite{spruck2020complete}, the authors also showed that in $\mathbf{R}^3$, the only graphical translators are the (tilted) grim reaper plane, 2d bowl soliton, and the one parameter family of translators they constructed, which are called $\Delta$-wings.\\ 

We prove dynamical stability of two dimensional graphical translators which are defined over a slab. The case of bowl soliton is dealt with in theorem \ref{dynamical stability of cylindrical translators intro}.  
\begin{theorem}\label{dynamical stability for 2d translators intro}
    Let $\overline{M} = \text{Graph}(\overline{u})$ be a graphical translator defined over $\Omega^2_b = \mathbf{R}\times (-b, b)$, and not contained in $\Omega^2_{b'}\times \mathbf{R}$ for any $b' < b$. Let $u_0 \in C^0(\Omega^2_b)$ be a convex function in the sense that the graph of $u_0$ encloses a convex region in $\mathbf{R}^3$, and
\begin{equation}
   \|u_0 - \overline{u}\|_{C^0(\Omega_b^2)}\leq C_0 < \infty.
\end{equation}
Then, there exists a longtime solution $u$ to the graphical mean curvature flow starting from $u_0$, so that for each $t_j \to \infty$, by possibly passing through subsequence, we have
$$\lim_{j \to + \infty}u(x_1,x_2,t + {t}_j) -t_j  = \overline{u}(x_1 + c_1, x_2) + t+c_0 \text{ in }C^{\infty}_{loc}(\Omega^2_b \times \mathbf{R})$$
for some $c_0, c_1 \in \mathbf{R}$. In other words, the perturbed flow converges to the translating solution $\overline{M} + te_3$ modulo translation in $x_1x_3$-direction. If $\overline{M}$ is a (tilted) grim reaper plane, then $c_1 = 0$, $c_0 \in [-C_0, C_0]$. If $\overline{M}$ is a $\Delta$-wing (hence $b > \frac{\pi}{2})$, then $|c_1|\tan\theta + |c_0| \leq C_0$ where $\theta = \arccos\frac{\pi}{2b}$. 
\end{theorem}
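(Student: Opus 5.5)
Existence comes straight from Theorem \ref{existence of graphical solution rough statement intro}. Since $\overline{u}$ is a complete graph over $\Omega^2_b$, we have $\overline{u}\to+\infty$ as $|x_2|\to b$, so $u_0=\overline{u}+O(1)$ satisfies the blow-up hypothesis near $\partial\Omega^2_b$. We will therefore take the solution $u$ whose time slices are complete graphs. Next we compare $u$ with the translating solutions $\overline{u}(x)+t\pm C_0$, which are barriers. Since both graphs are complete over the same slab, the avoidance and maximum principle for graphs over $\Omega^2_b$ should give
\[
\overline{u}(x)+t-C_0\le u(x,t)\le \overline{u}(x)+t+C_0 \quad\text{for all } t\geq 0.
\]
To justify this we work on the approximating problems used in the construction of $u$ and pass to the limit. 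Convexity of the enclosed region is preserved under the flow (Huisken-type argument for convex graphs, or approximation by convex compact pieces), so each time slice of $u$ is convex.

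\textbf{Local estimates and extraction of limits.} Set $u_j(x,t)=u(x,t+t_j)-t_j$. By the two-sided bound, $|u_j-\overline{u}-t|\le C_0$, so the $u_j$ are locally uniformly bounded on $\Omega^2_b\times\mathbf{R}$ for $j$ large. The Ecker--Huisken interior gradient estimate, applied over compact subsets of the slab, gives local gradient bounds; curvature estimates then follow, and higher derivative bounds give $C^\infty_{loc}$ subconvergence to an eternal convex graphical flow $u_\infty$ satisfying $|u_\infty-\overline{u}-t|\le C_0$. A nicer alternative is to use convexity: a convex function squeezed between two translates is locally Lipschitz with a uniform constant.

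\textbf{Identifying the limit.} We must show $u_\infty$ is a translator, and this is the main obstacle. My first attempt uses a Hamilton-type Harnack/rigidity argument. The flow $u_\infty$ is an eternal, convex (so mean convex) flow. Moreover $H=\partial_t u/\sqrt{1+|Du|^2}$, and the speed is sandwiched: $\partial_t u$ averaged over time equals $1$, since $u-t$ is bounded. Let $\nu$ denote the unit normal. Hamilton's differential Harnack for convex flows then says that if $\sup H$ is attained, the flow is a translator; here the sup is attained by arranging it through the choice of the $t_j$. The alternative is a monotone quantity: the Ilmanen weighted area of $u-t$ in the region between the barriers. Decreasing Lyapunov functional arguments (area in the conformal metric $e^{x_3}\delta$ localized to the slab) force the $\omega$-limit to be stationary for that functional, i.e.\ a translator in direction $e_3$. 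I would pursue the Lyapunov approach, using the bounded difference to control boundary terms near $\partial\Omega^2_b$ (possibly hard: the region is noncompact, so cutoffs are needed). Once the limit is a complete graphical translator over $\Omega^2_b$ at bounded distance from $\overline{u}$, the classification of Hoffman et al.\ says it is a grim reaper plane, a $\Delta$-wing, or a bowl. The bowl is excluded by the slab. Width matching and the bounded distance fix the same tilt and width parameter, leaving only translations $\overline{u}(x_1+c_1,x_2)+c_0$. Vertical translation in $x_2$ is forbidden because the slab is fixed symmetric.

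\textbf{Constants.} For the grim reaper plane, $\overline{u}$ is independent of $x_1$ up to tilt, and translation in $x_1$ can be absorbed into $c_0$. This gives $c_1=0$, and the barrier bound gives $|c_0|\le C_0$. For the $\Delta$-wing, the asymptotics say that as $x_1\to\pm\infty$ the wing approaches tilted grim reaper planes with slope $\pm\tan\theta$, where $\theta=\arccos\frac{\pi}{2b}$. We then require $|\overline{u}(x_1+c_1,x_2)+c_0-\overline{u}(x_1,x_2)|\le C_0$ as $x_1\to\pm\infty$. This gives $|c_0\pm c_1\tan\theta|\le C_0$ for both signs, and hence $|c_1|\tan\theta+|c_0|\le C_0$.
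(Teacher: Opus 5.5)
You have the same framework as the paper: barriers $\overline{u}+t\pm C_0$, local estimates, an eternal subsequential limit, the Hoffman et al.\ classification, and the bookkeeping for $c_0,c_1$. Your derivation of $|c_0|+|c_1|\tan\theta\le C_0$ from $|c_0\pm c_1\tan\theta|\le C_0$ is correct. The genuine gap is the central step, showing that the limit $u_\infty$ is a translator; neither route you offer gives an argument.

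\emph{Hamilton rigidity.} Hamilton's rigidity theorem needs the spacetime supremum of $H$ on the eternal flow to be attained. Choosing $t_j$ controls only time. Nothing prevents a maximizing sequence from escaping to $|x_1|\to\infty$ or toward $\partial\Omega^2_b$, and you give no mechanism for attainment.

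\emph{Lyapunov functional.} This fails at the outset. The weighted area $\int e^{x_3}\,d\mathcal{H}^2$ is infinite on every surface in question, since $x_3\to+\infty$ near $\partial\Omega^2_b$. Already for the grim reaper, $\int e^{\overline{u}}\,ds=\int_{-\pi/2}^{\pi/2}\cos^{-2}x\,dx=\infty$. You give no renormalization and no control of fluxes at infinity.

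The missing idea is a specific application of the Harnack inequality. At fixed $x$ the vertical speed is $\partial_t u=Hv$, and the tangential part of the motion $\partial_t u\,e_3$ is $V=Hv\,e_3^{\tan}$. A direct computation gives $\partial_t^2u_\infty=v\,Z(V)\ge0$, with $Z$ the ancient Harnack expression, so $t\mapsto\partial_t u_\infty(x,t)$ is nondecreasing. Your own observation that $|u_\infty-\overline{u}-t|\le C_0$ forces average speed $1$ then finishes the argument:
\begin{itemize}
\item If $\partial_t u_\infty(x_0,t_0)=1+\delta$, monotonicity forward in time contradicts $u_\infty(x_0,t)-u_\infty(x_0,s)\le t-s+2C_0$.
\item If it equals $1-\delta$, monotonicity backward in time (the limit is eternal) contradicts the matching lower bound.
\end{itemize}
Hence $\partial_t u_\infty\equiv1$.

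This in turn needs convexity and the Harnack inequality on $u_\infty$, which the solution from the general existence theorem does not obviously supply. Its approximating data $\min\{u_0,j\}*\eta_j$ are not convex. Preservation of convexity for noncompact flows from merely continuous data is not something you can quote; the paper explicitly notes this is unclear. Hamilton's inequality for complete flows also requires bounded curvature.

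The paper therefore constructs $u$ as a limit of the compact convex flows starting from $\partial\{(x,x_3): u_0(x)\le x_3\le 2i-u_0(x),\ u_0(x)\le i\}$, so that convexity and the Harnack inequality pass to the limit. This construction needs $u_0$ to be proper, which fails for the tilted grim reaper plane. In that case one first uses convexity and the $C^0$ bound to show $u_0(x_1,x_2)=u_0(0,x_2)+x_1\tan\theta$, and then reduces to a one-dimensional problem.
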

\begin{remark}
    In view of theorem 2.14 in \cite{dynamicsconvex}, there is a unique convex mean curvature flow starting from graph of $u_0$. In particular, theorem \ref{dynamical stability for 2d translators intro} implies that the convex mean curvature flow starting from graph of $u_0$ has to be graphical over $\Omega^2_b$, and converge locally smoothly to $\overline{M}$ modulo space translation. On the other hand, all solutions we are considering are noncompact, thus it is unclear if convexity is preserved for all possible evolution. 
\end{remark}
\begin{remark}
    Unlike theorem \ref{dynamical stability of grim reaper intro}, we have to allow the possibility of an extra translation in $x_1$-variable in addition to the vertical ($x_3$-direction) translation. This is because an $x_1$-direction translate of a $\Delta$-wing can be bounded by two vertical translates of the same $\Delta$-wing (see remark \ref{extra x1 translation} for details). The extra $x_1$-direction translation is superfluous for (tilted) grim reaper planes. 
\end{remark}

We now consider higher dimensional translators. Clutterbuck et.al \cite{bowlstability} proved the existence, uniqueness, and dynamical stability result for the bowl soliton in all dimensions. Later, Haslhofer \cite{Haslhofer2014UniquenessOT} proved a uniqueness result of the bowl soliton among uniformly two convex, non-collapsed translators. \\

In general, there is a whole family of graphical translators. Hoffman et.al \cite{Hoffman2018GraphicalTF} constructed $n-1$-parameter family of graphical translators in $\mathbf{R}^{n+1}$ which are not in general rotationally symmetric. In $\mathbf{R}^4$, Choi, Haslhofer, and Hershcovits \cite{Choi2021ClassificationON} obtained a classification of non-collapsed translators, which are either the 3d bowl soliton, 2d bowl soliton$\times \mathbf{R}$, or a one parameter family of graphical translators constructed in \cite{Hoffman2018GraphicalTF}, which the authors call HIMW class. Very recently, Bamler and Lai \cite{Bamler2025-thclassification}, \cite{Bamler2025-pdeodi} fully classified all asymptotically cylindrical flows, in particular all asymptotically cylindrical translators, which turns out to be the bowl soliton and non-collapsed entire graph translators constructed in \cite{Hoffman2018GraphicalTF} with possible extra Euclidean factors. In particular, they showed that the asymptotically cylindrical translators are precisely the non-collapsed translators in any dimensions.\\

By closely following the terminologies in \cite{Bamler2025-pdeodi}, \cite{Bamler2025-thclassification}, we first define the notion of asymptotically $(n,k)$-cylindrical translators.
\begin{definition}\label{def of asymptotically cylindrical translators.}
Let $n \geq 2$, $1 \leq k \leq n-1$. A translator $M$ is asymptotically $(n,k)$- cylindrical if for any $\lambda_j \to 0$, the parabolically rescaled sequence of solutions 
\begin{equation}
   M^{j}_t = \lambda_j (M + \lambda_j^{-2}te_{n+1}) \ \ t\in (-\infty, 0) 
\end{equation}
converges locally smoothly to a multiplicity one shrinking cylinder solution
\begin{equation}
    M^{n,k}_{cyl} = \{\mathbf{S}^{n-k}(\sqrt{-2(n-k)t}) \times \mathbf{R}^k\}_{t\in (-\infty, 0)}
\end{equation}
as $j \to \infty$.
\end{definition}
The following theorem concerns the dynamical stability of asymptotically $(n,k)$-cylindrical translators. 
\begin{theorem}\label{dynamical stability of cylindrical translators intro}
Let $n \geq 2$, and $1 \leq k \leq n-1$. Let $\overline{M} = \text{Graph}(\overline{u})$ be an asymptotically $(n,k)$-cylindrical translator (see definition \ref{def of asymptotically cylindrical translators.}). Let $u_0 : \mathbf{R}^n \to \mathbf{R}$ so that
\begin{equation}
    (i)\  \|u_0 - \overline{u}\|_{C^0(\mathbf{R}^n)} \leq C_0 < \infty, \ \ \ (ii)\  M_0 = \text{Graph}(u_0) \text{ is mean convex}.
\end{equation}
Then there exists a longtime solution $u$ to graphical mean curvature flow starting from $u_0$, so that for each $t_j \to \infty$, by possibly passing through subsequence, we have
\begin{equation}
    \lim_{j \to \infty}u(x, t +t_j) - t_j = \overline{u}(x + p_1)+t + p_2 \text{ in }C^{\infty}_{loc}(\mathbf{R}^n\times \mathbf{R}),
\end{equation}
where $(p_1, p_2) \in \mathbf{R}^n \times \mathbf{R} = \mathbf{R}^{n+1}$. In other words, the perturbed solution converges locally smoothly to the translating solution $\overline{M} + te_{n+1}$ modulo space translation. The amount of translation $(p_1, p_2)$ has norm comparable to $C_0$.
\end{theorem}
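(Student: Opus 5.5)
We argue by barriers and compactness, in the spirit of the bowl soliton stability result of \cite{bowlstability}. Since $\overline{u}$ is an entire graph and $\|u_0-\overline{u}\|_{C^0}\le C_0$, the function $u_0$ is continuous on $\mathbf{R}^n$. The existence theory for entire graphs (Ecker--Huisken, Clutterbuck) therefore gives a longtime smooth graphical solution $u$ with $u(\cdot,0)=u_0$. The translating solutions $\overline{u}(x)+t\pm C_0$ lie above and below the initial data. A comparison principle for entire graphs, available because the difference is bounded, then yields
\begin{equation*}
  \overline{u}(x)+t-C_0\le u(x,t)\le \overline{u}(x)+t+C_0
\end{equation*}
for all $t\ge 0$. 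The interior gradient and curvature estimates of Ecker--Huisken apply on compact sets, since $u-t$ is locally bounded and $\overline{u}$ has locally bounded gradient. Hence for any $t_j\to\infty$ the shifted flows $u_j(x,t)=u(x,t+t_j)-t_j$ subconverge in $C^\infty_{loc}(\mathbf{R}^n\times\mathbf{R})$ to an eternal graphical flow $u_\infty$ satisfying $|u_\infty-\overline{u}-t|\le C_0$. Mean convexity of $M_0$ is preserved, so $u_\infty$ is mean convex.

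The remaining task is to show that $u_\infty$ is a translating solution, and this is the main obstacle. I would first try a Lyapunov-type argument. The quantity $H-\langle \nu,e_{n+1}\rangle$, which measures the failure of the translator equation, satisfies a linear parabolic equation. Alternatively, by Ilmanen's observation translators are minimal for the conformal metric $e^{\frac{2}{n}x_{n+1}}\delta$, so a localized Huisken-type monotone quantity (weighted area relative to $\overline{M}+te_{n+1}$) should be nonincreasing and bounded below along the flow. Its time derivative then integrates to a finite quantity, and the limit along $t_j$ has vanishing dissipation, i.e.\ satisfies $H=\langle\nu,e_{n+1}\rangle$.

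If localizing these weights fails because the region is noncompact, I would use the following fallback. Eternal mean convex flows trapped between two translates of $\overline{M}$ are noncollapsed, by Andrews-type noncollapsing inherited from $\overline{M}$. Their blowdowns match those of $\overline{M}$, hence are asymptotically $(n,k)$-cylindrical. The classification of Bamler--Lai \cite{Bamler2025-thclassification}, \cite{Bamler2025-pdeodi} of ancient asymptotically cylindrical flows then forces $u_\infty$ to be a translator. In the fallback the eternal flow is only ancient-in-disguise, so I would apply the classification to the ancient portion $t\le 0$, which determines $u_\infty$ by uniqueness forward.

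Once $u_\infty$ is a translating graph lying within bounded distance of $\overline{u}+t$, it has the same blowdown and the same speed. By the classification it equals a translate $\overline{u}(x+p_1)+p_2$ of $\overline{M}$. The alternative elements of the classification, such as differently parametrized HIMW translators or ones with different Euclidean factors, are excluded because they are not at bounded $C^0$ distance from $\overline{u}$. Finally, the bound on $(p_1,p_2)$ follows from the inequality $|\overline{u}(x+p_1)+p_2-\overline{u}(x)|\le C_0$ evaluated along suitable rays, using the asymptotic cylindrical (and, in the $\mathbf{R}^k$ directions, grim reaper/bowl-like) growth of $\overline{u}$. This shows that $|p_1|+|p_2|\lesssim C_0$.
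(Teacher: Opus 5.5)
Your architecture matches the paper's: barriers $\overline{u}+t\pm C_0$, forward limits, showing the limit is asymptotically cylindrical, invoking Bamler--Lai, then identifying with $\overline{M}$. However, the two steps that carry the real content are asserted rather than proved.

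\textbf{Gap 1: multiplicity one of the blowdown.} You claim the eternal limit $M^\infty_t$ is noncollapsed ``by Andrews-type noncollapsing inherited from $\overline{M}$.'' There is no such mechanism. Noncollapsing propagates forward from the initial surface; it is not inherited from barriers. Here $M_0$ is noncompact and only mean convex, not $\alpha$-noncollapsed for any $\alpha$.

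Trapping between two translates of $\overline{M}$ only shows that a blowdown limit $\mu_t$ of $N^\lambda_t=\lambda M^\infty_{\lambda^{-2}t}$ is supported on the cylinder $M^{n,k}_t$. It says nothing about multiplicity. Without multiplicity one, $M^\infty$ is not known to be asymptotically $(n,k)$-cylindrical, so the Bamler--Lai classification cannot be applied. Note that your fallback never uses hypothesis (ii) in an essential way; that is a sign this step is missing.

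The paper uses (ii) exactly here. Because $N^\lambda_t$ is an entire graph, its vertical translates foliate $\mathbf{R}^{n+1}$. This gives area bounds and hence a Brakke blowdown. Because $N^\lambda_t$ is also mean convex, the foliation calibrates. Comparing with $\lambda\overline{M}_{\lambda^{-2}t}-\lambda C_0e_{n+1}$ in a ball then gives $\mu_t(B(x_0,r))\le\mathcal{H}^n(M^{n,k}_t\cap B(x_0,r))$, i.e.\ density at most one. White's local regularity then upgrades the convergence to smooth.

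Your Lyapunov alternative does not fill this gap. The Ilmanen-weighted area $\int e^{x_{n+1}}$ is infinite on these graphs, and you exhibit no finite, bounded-below relative version.

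\textbf{Gap 2: identifying the limit with $\overline{M}$.} You dismiss the other translators in the classification because they ``are not at bounded $C^0$ distance from $\overline{u}$.'' That is precisely the rigidity statement that needs proof. All flying wings with velocity $e_{n+1}$ have the same blowdown cylinder and the same $b=e_{n+1}$; they differ only in the parameter $Q$. Nothing in your proposal shows that bounded distance forces equal $Q$.

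The paper argues as follows.
\begin{enumerate}
\item Translate $M^\infty$ into $\textup{\textbf{MCF}}^{n,k}_{\textup{soliton}}$. After type I rescaling, the bounded vertical offset becomes $O(e^{\tau/2})$, so the two graph functions over the cylinder agree to $O(e^{\tau/2})$ on compact sets.
\item Compare with the Bamler--Lai expansions $U^+$ (error $O(|\tau|^{-11})$). Orthogonality of the Hermite modes gives $\|\hat U_0-\overline U_0\|_{L^2_w}\le C|\tau|^{-11}$.
\item Hence $\|\hat V-\overline V\|\le C|\tau|^{-3}$. The ODE uniqueness lemma (Lemma 7.32 of Bamler--Lai) forces $\hat V=\overline V$, so $Q$ agrees.
\item Theorem 1.6 of Bamler--Lai then gives $M^\infty=\overline{M}+p_0$.
\end{enumerate}
Two further points are needed. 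When $\overline{M}$ is a wing, you must rule out a bowl$\times\mathbf{R}^{k-1}$ limit; the paper does this by reversing the roles of the two surfaces. In the bowl case, you must show the Euclidean factor splits off (using convexity and bounded distance) before applying Haslhofer's uniqueness.

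\textbf{Minor.} Continuity of $u_0$ does not follow from $C^0$ closeness to $\overline{u}$. It comes from hypothesis (ii), which presupposes $C^2$ regularity.
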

\begin{remark}
   The modulo space translation statement is necessary due to examples such as $u_0 = \overline{u} + c$ for some $c \neq 0$.
\end{remark}
\begin{remark}
One can see from the proof of theorem \ref{dynamical stability of cylindrical translators intro} that the mean convexity condition in theorem \ref{dynamical stability of cylindrical translators intro} is only used to exclude potential high multiplicity tangent flows. In particular, one can impose different assumption in place of (ii), and still get the same result. In particular, we have the following corollary of the proof of theorem \ref{dynamical stability of cylindrical translators intro}.    
\end{remark}
\begin{corollary}\label{alternative dynamical stability}
    Following the notations in theorem \ref{dynamical stability of cylindrical translators intro}, suppose
    \begin{equation}\label{alternative assumption on initial data cylindrical flows}
        (i) \ \|u_0 - \overline{u}\|_{C^0(\mathbf{R}^n)} \leq C_0 < \infty, \ \ \ (ii) \ \lambda(M_0) < 2\lambda(\overline{M}),
    \end{equation}
    where $\lambda$ denotes the Colding-Minicozzi entropy (see definition \ref{colding minicozzi definition}). Then the same conclusion as theorem \ref{dynamical stability of cylindrical translators intro} holds.
\end{corollary}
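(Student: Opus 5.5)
The plan is to rerun the proof of Theorem \ref{dynamical stability of cylindrical translators intro}, changing only the step where mean convexity is used, namely the exclusion of higher multiplicity tangent flows and blowup limits. The existence of a longtime graphical solution $u$ starting from $u_0$ comes from the entire graph theory (Ecker--Huisken, Clutterbuck), and it does not use (ii). By the comparison principle applied to $\overline{u}+t\pm C_0$, we get
$$|u(x,t)-t-\overline{u}(x)|\le C_0 \quad \text{for all } t.$$
So the time-translated flows $M^j_t=\mathrm{Graph}(u(\cdot,t+t_j)-t_j)$ are trapped between $\overline{M}+te_{n+1}\pm C_0e_{n+1}$. Interior gradient estimates for graphs, via the Ecker--Huisken height/gradient estimates using the barrier, give local bounds on $|Du|$. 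These yield local curvature estimates and a subsequential limit in $C^\infty_{loc}$, which is an eternal graphical flow $N_t$ lying within vertical distance $C_0$ of $\overline{M}+te_{n+1}$.

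Next we show the limit $N_t$ is asymptotically cylindrical. Because it is trapped near $\overline{M}+te_{n+1}$, every blow-down sequence $\lambda_i N_{\lambda_i^{-2}t}$ with $\lambda_i\to0$ is trapped in an $O(\lambda_i C_0)$ neighbourhood of the corresponding blow-down of $\overline{M}$. Definition \ref{def of asymptotically cylindrical translators.} says the latter converges to $M^{n,k}_{cyl}$, so as Brakke flows the blow-downs of $N_t$ converge to $M^{n,k}_{cyl}$ with some integer multiplicity $m\ge1$. This is where (ii) replaces mean convexity. Huisken monotonicity makes Colding--Minicozzi entropy nonincreasing along the flow and lower semicontinuous under Brakke flow convergence, so
$$\lambda(N_t)\le\lambda(M_0)<2\lambda(\overline{M}).$$
The entropy of $\overline{M}$ equals that of its asymptotic cylinder, $\lambda(\overline{M})=\lambda(M^{n,k}_{cyl})$, since the blow-down is the limit and entropy is monotone in the translator's time. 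The limit multiplicity $m$ satisfies $m\,\lambda(M^{n,k}_{cyl})\le\lambda(N)<2\lambda(M^{n,k}_{cyl})$, which forces $m=1$. By Brakke--White regularity the convergence is then smooth, so $N_t$ is an asymptotically $(n,k)$-cylindrical, noncollapsed eternal flow.

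The remaining steps are identical to the proof of Theorem \ref{dynamical stability of cylindrical translators intro}. Using the Bamler--Lai classification of asymptotically cylindrical ancient flows \cite{Bamler2025-thclassification}, \cite{Bamler2025-pdeodi}, together with the bounded-distance trapping (which rules out shrinking cylinders, ovals and non-translating members, and pins down the same translator type and parameters as $\overline{M}$), we identify $N_t=\overline{M}+te_{n+1}+(p_1,p_2)$. The bound $|p|\lesssim C_0$ follows from the trapping. The main obstacle is the entropy identity $\lambda(\overline{M})=\lambda(M^{n,k}_{cyl})$ and the semicontinuity needed to pass the entropy bound through both the $t_j\to\infty$ limit and the blow-down. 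I would handle this with Huisken's monotonicity at all scales and centers, using the Gaussian density bound at infinity of the blow-down.
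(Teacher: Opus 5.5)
Your proposal is correct and follows essentially the same route as the paper. The entropy bound $\lambda(M^{\infty}_t)\le\lambda(M_0)<2\lambda(\overline{M})$ replaces mean convexity both for the mass bounds needed to extract the tangent flow at infinity and for ruling out multiplicity $m\ge 2$, after which White's local regularity and the rest of the proof of the mean convex theorem go through unchanged. The paper gets the constant integer multiplicity by first showing $\mathrm{supp}(\mu_t)=M^{n,k}_t$ (barrier plus calibration, which does not use mean convexity) and then applying the constancy theorem for $F$-stationary integral varifolds; you assert this step without argument. In turn, you make explicit the identity $\lambda(\overline{M})=\lambda(M^{n,k}_{cyl})$ and the lower semicontinuity, which the paper uses tacitly when it says the entropy bound forces $m=1$.
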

\begin{remark}
    Comparing with the dynamical stability result of Clutterbuck et.al \cite{bowlstability}, our theorem considers all possible non-collapsed translators. Moreover, we still obtain a stability result without a strong assumption on the asymptotic behavior of the initial graph at farfield. On the other hand, we require mean convexity, which can be viewed as a $C^2$ closeness condition. Indeed, by the translator equation \eqref{translator equation} written using $\overline{u}$, we have
    \begin{equation}
        Q(D\overline{u}, D^2\overline{u}) = (\delta_{ij} - \frac{D_i\overline{u}D_j\overline{u}}{1 + |D\overline{u}|^2})D^2_{ij}\overline{u} = 1.
    \end{equation}
    Since $Q$ is smooth with respect to its arguments, we see that even after perturbing $D\overline{u}, D^2\overline{u}$ by a small amount, $Q$ will still be positive, meaning the perturbed surface is still mean convex.
\end{remark}
In the case the initially given translator is a bowl soliton with a Euclidean factor, by additionally assuming a similar asymptotic behavior at farfield as in \cite{bowlstability}, we can remove the modulo space translation statement in theorem \ref{dynamical stability of cylindrical translators intro} by using the barriers constructed in \cite{bowlstability}.
\begin{corollary}\label{strong convergence to bowl with strong asymptotic behavior}
Following the notations in theorem \ref{dynamical stability of cylindrical translators intro}, let $\overline{M} = \text{Graph}(\overline{u})$ be a $n-k+1$-dimensional bowl soliton times $\mathbf{R}^{k-1}$. In other words, if we write $x = ( \hat{x},\Tilde{x}) \in \mathbf{R}^{n-k+1}\times \mathbf{R}^{k-1}$, then \begin{equation}
    \overline{u}(x) = \overline{u}(\hat{x})
\end{equation}    
is the graph function of a $n-k+1$-dimensional bowl soliton in $\mathbf{R}^{n-k+2}$. If we assume that
\begin{equation}\label{strongasymptoticbehaviorassumption}
   \|u_0 - \hat{u}\|_{C^0(\mathbf{R}^n)} < \infty, \ \lim_{|\hat{x}| \to \infty}\sup_{\Tilde{x}\in \mathbf{R}^{k-1}}|u_0(x) - \overline{u}(x)| = 0, 
\end{equation}
and $M_0 = \text{Graph}(u_0)$ is mean convex, then we have 
\begin{equation}
    \lim_{t \to +\infty}u(x,t) - t = \overline{u}(x) \text{ in }C^{\infty}_{loc}(\mathbf{R}^n).
\end{equation}
\end{corollary}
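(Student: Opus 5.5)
Corollary \ref{strong convergence to bowl with strong asymptotic behavior} is the case of Theorem \ref{dynamical stability of cylindrical translators intro} where $\overline{M}$ is an $(n-k+1)$-dimensional bowl times $\mathbf{R}^{k-1}$. So I would start from that theorem's conclusion and then show that the translation $(p_1,p_2)$ must vanish, using barriers that force every subsequential limit to equal $\overline{u}$ itself.

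First, the $C^0$ bound together with mean convexity lets me apply Theorem \ref{dynamical stability of cylindrical translators intro}. For every sequence $t_j\to\infty$, after passing to a subsequence, $u(x,t+t_j)-t_j\to \overline{u}(x+p_1)+t+p_2$ locally smoothly. This flow is the unique limiting translating solution along that subsequence.

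Second, I would build barriers from \cite{bowlstability}. Clutterbuck, Schn\"urer and Schulze construct, for the $(n-k+1)$-dimensional bowl $\overline{u}(\hat x)$ and each $\varepsilon>0$, sub- and supersolutions $w^\pm_\varepsilon(\hat x,t)$ of graphical MCF in $\mathbf{R}^{n-k+1}$ with three properties:
\begin{itemize}
\item they are asymptotic to $\overline{u}+t\pm\varepsilon$ as $t\to\infty$ locally uniformly;
\item they differ from $\overline{u}+t$ by a bounded amount;
\item at $t=0$ they lie below (resp.\ above) any initial datum $v$ with $\|v-\overline{u}\|_{C^0}<\infty$ and $|v-\overline{u}|\le\varepsilon$ for $|\hat x|\ge R(\varepsilon)$.
\end{itemize}
These are built from translated or rescaled bowls glued to the bowl at infinity, combined with the decay $|u_0-\overline{u}|\to 0$.

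Extending $w^\pm_\varepsilon$ trivially in $\tilde x$ keeps them sub- and supersolutions in $\mathbf{R}^n$. The hypothesis \eqref{strongasymptoticbehaviorassumption} is uniform in $\tilde x$, so $w^-_\varepsilon(\cdot,0)\le u_0\le w^+_\varepsilon(\cdot,0)$. I then need a comparison principle for entire graphical solutions with bounded difference, applied to $u$ and $w^\pm_\varepsilon$. It should follow from the maximum principle for the difference of two graphs with bounded difference (Ecker--Huisken type noncompact comparison), since all functions involved differ from $\overline{u}+t$ by bounded amounts. This gives $w^-_\varepsilon\le u\le w^+_\varepsilon$ for all $t$.

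Passing to the limit along $t_j$ gives
\[
\overline{u}(x)-\varepsilon\le \overline{u}(x+p_1)+p_2\le \overline{u}(x)+\varepsilon \quad\text{for all } x .
\]
Letting $\varepsilon\to0$ yields $\overline{u}(x+p_1)+p_2=\overline{u}(x)$. Two consequences follow:
\begin{itemize}
\item The bowl grows like $|\hat x|^2/(2(n-k))$, so the difference $\overline{u}(\hat x+\hat p_1)-\overline{u}(\hat x)$ is unbounded unless $\hat p_1=0$. Hence $\hat p_1=0$ and then $p_2=0$.
\item The $\tilde x$-components of $p_1$ leave $\overline{u}$ unchanged, so they are harmless.
\end{itemize}
Thus every subsequential limit equals $\overline{u}+t$. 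Since the limit is independent of the subsequence, the full limit $u(x,t)-t\to\overline{u}$ holds in $C^\infty_{loc}$.

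The main obstacle is the barrier step. I need to check that the \cite{bowlstability} barriers really apply in the form above, and that the $\mathbf{R}^{k-1}$ factor causes no trouble. If their construction requires the decay only in the $\hat x$ variables, the uniform-in-$\tilde x$ assumption supplies exactly that. The noncompact comparison I would justify by a cutoff or weighted maximum principle argument for bounded differences.
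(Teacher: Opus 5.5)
Your overall structure (invoke Theorem \ref{dynamical stability of cylindrical translators intro}, then use barriers from \cite{bowlstability} to force the translation to vanish) matches the paper. However, the barrier step as you formulate it is not available, and your conclusion depends on it.

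The barriers of \cite{bowlstability} are the vertically shifted upper and lower half-wing translators. They are graphs only over an exterior region $\{|\hat x|\ge R\}$, and they approach $\overline u\pm\epsilon$ only as $|\hat x|\to\infty$. Being translators, their time slices are just vertical translates. So they are not asymptotic to $\overline u+t\pm\epsilon$ locally uniformly as $t\to\infty$.

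Crossed with $\mathbf{R}^{k-1}$ and used via avoidance, they give only far-field control, uniform in time: $\sup_{t\ge 0,\ |\hat x|\ge R(\epsilon)}|u-\overline u-t|\le 2\epsilon$. They say nothing on the non-compact slab $\{|\hat x|<R\}\times\mathbf{R}^{k-1}$. In \cite{bowlstability} the interior is treated by a separate strong maximum principle argument that relies on precompactness of $\{|\hat x|<R\}$. That is exactly what fails with a Euclidean factor, and it is why the paper handles the interior through Theorem \ref{dynamical stability of cylindrical translators intro}. If the wings had your first property, that theorem would be unnecessary. Also, a barrier lying below $u_0$ must depend on $\sup|u_0-\overline u|$, not only on $\epsilon$. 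Hence your two-sided inequality for all $x$ is not justified.

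The repair is easy, because your endgame only needs the far field. The global bound $|u-\overline u-t|\le C_0$ passes to the limit and, by the quadratic growth of the bowl, forces $\hat p_1=0$. Then the far-field estimate passes to the limit, evaluated at $|\hat x|\ge R(\epsilon)$, giving $|p_2|\le 2\epsilon$, so $p_2=0$. This is precisely the paper's proof.

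Alternatively, global barriers of the kind you describe do exist, but they come from the stability \emph{theorem} of \cite{bowlstability}, not from its barrier construction. That theorem is the $k=1$ case of this corollary, and you would apply it in dimension $n-k+1\ge 2$. Take the $\tilde x$-independent data $\overline u\mp\phi(\hat x)$, where $\phi$ is smooth, $\phi\ge\sup_{\tilde x}|u_0-\overline u|$ and $\phi\to 0$ at infinity, and compare the resulting solutions with $u$. That route would bypass Theorem \ref{dynamical stability of cylindrical translators intro} entirely, but it has to be argued that way rather than attributed to the wing barriers.
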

\begin{remark}
     We note that although corollary \ref{strong convergence to bowl with strong asymptotic behavior} is already proved in \cite{bowlstability} for bowl solitons $(k = 1)$, the result does not immediately follow from the arguments in \cite{bowlstability} in the case the initially given translator has an extra Euclidean factor. This is because the proof in \cite{bowlstability} heavily relies on the fact that the set where one cannot control $u - \overline{u}$ by the barriers constructed in \cite{bowlstability} is precompact. This property is no longer true when one adds an extra Euclidean factor. 
\end{remark}
\subsection*{\centering{Outline of the proof strategy}}\label{generalstrategy} We explain the main strategy to proving dynamical stability of various graphical translators. Classical regularity theory for graphical solutions to mean curvature flow established in \cite{eckerhuisken}, \cite{Ecker1991InteriorEF}, and theorem \ref{existence of graphical solution rough statement intro} implies the existence of longtime graphical solutions to mean curvature flow starting from the perturbed hypersurface. Let us denote the solution by $\{M_t\}_{t \in [0, \infty)}$. We then look at the forward limit of the flow. For any $t_j \to \infty$, we define
\begin{equation}
    M^j_t = M_{t + t_j} - t_je_{n+1}, \ t\in [-t_j, \infty),
\end{equation}
where $e_{n+1} = (0,0,..,1) \in \mathbf{R}^{n+1}$ is the velocity of the initially given translator. Note that in all the theorems we shall prove, we are assuming $C^0$ closeness. This means that one can use two copies of translating solutions generated by the given translator as upper and lower barriers. This will give us a uniform $C^0$ estimate for above sequence of solutions independently of $j$, which can be upgraded to uniform $C^k$ estimate. Thus, one can take a smooth subsequential limit. The limit flow is also a graphical solution to mean curvature flow, which is eternal and lies between two translating solutions.\\

The first task is to show that the limit flow is also a graphical translating solution with the same velocity $e_{n+1}$. For theorem \ref{dynamical stability of grim reaper intro}, and theorem \ref{dynamical stability for 2d translators intro}, this will be achieved by using the Harnack inequality of Hamilton \cite{Hamilton1995HarnackEF}. For theorem \ref{dynamical stability of cylindrical translators intro}, this will be done by showing that the limit flow we obtained is an asymptotically cylindrical flow. Then by using the recent  classification result of \cite{Bamler2025-thclassification}, together with the fact that the limit flow is eternal, bounded between two translating solutions, we can conclude that the limit flow is a non-collapsed translator of the same velocity as the two barrier solutions. We then use existing classification theorems of graphical translators to conclude that the limit flow is actually equal to the translating solutions generated by the initial translator up to space translation. \\

We emphasize that our strategy heavily relies on classification theorems for translators. The strategy itself can actually be applied to more general convex graphical translators, and will tell us that in the limit, one gets a convex graphical translator that can be bounded by two copies of the initial translator. However, to show that the limit translator is equal to the one we started with, one needs a strong understanding of how various translators look like. To the author's knowledge, currently available classification results of translators are
\begin{itemize}
    \item Uniqueness of one dimensional translators (see \cite{Halldorsson2010SelfsimilarST}).
     \item Uniqueness of bowl solitons (see \cite{Haslhofer2014UniquenessOT}).
    \item Classification of graphical translators in $\mathbf{R}^3$ (see \cite{Hoffman2018GraphicalTF}).
   
    \item Classification of asymptotically cylindrical translators (see \cite{Bamler2025-thclassification}, \cite{Bamler2025-pdeodi}, \cite{Choi2021ClassificationON}).
\end{itemize}
In particular, classification of higher dimensional graphical translators which are defined over higher dimensional slabs remains open, hence our method cannot yet be used to cover this case as well. \\

We give an outline of this paper. In section \ref{preliminaries section}, we set up frequently used notations, and recall several concepts that will be useful throughout the paper. In section \ref{existence of graphical solution section}, we prove theorem \ref{existence of graphical solution rough statement intro}. In section \ref{dynamical stability of grim reaper section}, we prove theorem \ref{dynamical stability of grim reaper intro}. In section \ref{dynamical stability of graphical translators in R3 section}, we prove theorem \ref{dynamical stability for 2d translators intro}. Lastly, in section \ref{asymptoticallycylindricalsection}, we prove theorem \ref{dynamical stability of cylindrical translators intro}.
\section{\centering{Notations and preliminaries}}\label{preliminaries section}

In this section, we set up notations that we will be frequently using, and collect preliminaries that will be relevant to our work. \\

We will be working with graphical solutions to mean curvature flow, either over the whole $\mathbf{R}^n$ or a slab. 
\begin{definition}[n-dimensional slab]
    For each $n \geq 1$, $b > 0$, we define the $n$-dimensional slab of width $b > 0$ to be
    \begin{equation}
        \Omega^n_b = \mathbf{R}^{n-1}\times (-b,b) \subset \mathbf{R}^{n}.
    \end{equation}
\end{definition}
We next define subsets in $\Omega^n_b$ which will play the role of balls in $\mathbf{R}^n$. 
\begin{definition}\label{Some domains and quantities}
    For each $r > 0$, $\lambda \in (0, b)$, we define
    \begin{equation}\label{definition of domain}
    K_{r, \lambda} = B_{\mathbf{R}^{n-1}}(0, r) \times (-b + \lambda, b - \lambda) \subset \Omega^n_b.
    \end{equation}
\end{definition}
\begin{definition}
For each $1 \leq i \leq n+1$, we let \begin{equation}
    e_i = (0,..,1,0..0) \in \mathbf{R}^{n+1}
\end{equation}
to be the standard $i^{th }$basis vector of $\mathbf{R}^{n+1}$.    
\end{definition}

Now, we recall some basic properties of graphical mean curvature flow. It is well known that a family of hypersurfaces which are graphs of function, i.e
\begin{equation}
    M_t = \text{Graph}(u(\cdot, t))
\end{equation}
evolves by mean curvature flow if and only if $u$ solves the graphical mean curvature flow equation
\begin{equation}
    \frac{\partial u}{\partial t} = \sqrt{1 + |Du|^2}\text{div}(\frac{Du}{\sqrt{1 + |Du|^2}}).
\end{equation}
We follow the notations in \cite{eckerhuisken}.
\begin{definition}[Commonly used quantities]\label{def of commonly used quantities}
For graphical solution to mean curvature flow
\begin{equation}
    \{M_t =\text{Graph}(u(\cdot, t))\}_{t \in I},
\end{equation}
we define
\begin{align*}
    &\nu = \frac{1}{\sqrt{1 + |Du|^2}}(-Du, 1) = \text{upward unit normal vector field},\\
    &v = \frac{1}{\langle \nu, e_{n+1}\rangle } = \sqrt{1 + |Du|^2}, \\
    &A_{ij} = \frac{D^2_{ij}u}{\sqrt{1 + |Du|^2}} = \text{second fundamental form}, \\ & H = \textit{\emph{div}}(\frac{Du}{\sqrt{1 +|Du|^2}}) = \text{mean curvature},
\end{align*}
where $Du$ is the spatial derivative of $u(\cdot, t)$. 
\end{definition}
We now recall the `ancient pancake' solution to mean curvature flow constructed in theorem 1.1 in \cite{ancientpancake}. We note that the $x_n$-axis will play the role of $x_1$-axis in the original statement of theorem 1.1 in \cite{ancientpancake}.
\begin{definition}[Ancient pancake solution]\label{def of ancient pancake}
For each $n \geq 1$, there exists a compact, convex, $O(1)\times O(n)$ symmetric ancient solution to mean curvature flow which lies in the slab
$\Omega = \{x \in \mathbf{R}^{n+1}\ 
 |\ |x_n| < \frac{\pi}{2}\}$, denoted by $\{\Sigma^n_t\}_{t \in (-\infty, 0)}$. Furthermore, as $t \to -\infty$, we have the following `radius' estimate
\begin{align}\label{model pancake asymptotic data}
   & \min_{p \in \Sigma_t}|p| = |P(e_n, t)| \geq \frac{\pi}{2} - o(\frac{1}{(-t)^k}) \text{ for any }k > 0, \\ &\max_{p \in \Sigma_t}|p| = |P(\phi, t)| = -t + (n-1)\ln(-t) + C + o(1) \text{ for any unit vector $\phi \in \{e_n\}^{\perp}$.} 
\end{align}
\end{definition}
Next, we recall Hamilton's differential Harnack inequality for mean curvature flow \cite{Hamilton1995HarnackEF}. 
\begin{theorem}\label{Hamilton's Harnack inequality}[Differential Harnack inequality]
    Let $\{M_t\}_{t \in (-\alpha, T)}$ be a convex, smooth solution to mean curvature flow which is either compact, or complete with bounded curvature. Then for any tangent vector $V$, the following inequality holds. 
    \begin{equation}
        Z(V) = A(V,V) + 2\nabla_VH + \frac{H}{2(t+\alpha)} + \nabla_tH \geq 0.
    \end{equation}
    Here, $A$ is the second fundamental form, $H$ is the mean curvature. In particular, if the flow is ancient i.e $\alpha = \infty$, then for any tangent vector $V$, we have
    \begin{equation}\label{ancient harnack inequality}
        Z(V) = A(V,V) + 2\nabla_VH + \nabla_tH \geq 0.
    \end{equation}
\end{theorem}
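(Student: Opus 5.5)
This is Hamilton's differential Harnack inequality, which the paper quotes from \cite{Hamilton1995HarnackEF}. I would follow Hamilton's maximum principle argument. Shift time so the flow lives on $(0, T+\alpha)$ with $s = t+\alpha$, and assume first that $M_t$ is strictly convex, i.e.\ $A>0$. The quantity $Z(V)$ is a quadratic polynomial in $V$, so its minimum over $V$ is attained at $V = -A^{-1}\nabla H$. It therefore suffices to show that the scalar
\begin{equation*}
Q = \nabla_t H - A^{-1}(\nabla H,\nabla H) + \frac{H}{2s}
\end{equation*}
is nonnegative. The natural object, however, is the full tensorial expression: extend $V$ to a suitable time-dependent vector field and compute $(\nabla_t - \Delta)Z(V)$. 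Hamilton's key computation uses the evolution equations of $A$, $\nabla A$ and $\nabla_t H$ under mean curvature flow together with the Codazzi equation. It shows that at a spatial minimum point where $Z(V)$ first vanishes, with $V$ chosen optimally, one has
\begin{equation*}
(\nabla_t-\Delta)Z \ge -\tfrac{2}{s}Z + (\text{nonnegative terms}).
\end{equation*}
The nonnegativity of the error terms relies on the convexity $A\ge 0$ through a Cauchy--Schwarz-type completion of squares.

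Next comes the maximum principle step. As $s\downarrow 0$, the term $H/(2s)$ dominates, since $H>0$ by strict convexity, so $Z>0$ for small $s$. The differential inequality then prevents $Z$ from ever reaching zero. In the compact case this is the standard first-touching argument applied to $\min_V Z(V)$ over the unit tangent bundle. In the complete, bounded-curvature case I would instead use a noncompact maximum principle, e.g.\ Ecker--Huisken style with cutoff functions. The bounded curvature hypothesis supplies, via Shi-type interior derivative estimates, bounds on $\nabla A$ and $\nabla^2 A$ for $s>0$. This keeps $Z$ and its growth controlled so that a negative infimum would have to be approached, giving a contradiction in the usual way.

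For merely weakly convex flows, I would either invoke Hamilton's strong maximum principle for $A$, which gives a splitting $\mathbf{R}^k\times$(strictly convex) where the Harnack quantity reduces to the strictly convex factor, or perturb $A$ to $A+\varepsilon g$ in the minimization and let $\varepsilon\to0$. Finally, the ancient statement \eqref{ancient harnack inequality} follows by applying the result on $(-\alpha',T)$ for every finite $\alpha'$ and letting $\alpha'\to\infty$, so that the term $H/(2(t+\alpha'))$ tends to $0$ pointwise.

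I expect the main obstacle to be the evolution computation for $Z$, in particular the algebraic completion-of-squares argument showing that the reaction terms are nonnegative under $A\ge0$. The noncompact maximum principle is the second delicate point. Since this is a quoted classical result, in the paper I would simply cite \cite{Hamilton1995HarnackEF} for both.
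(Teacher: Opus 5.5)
Your proposal is correct and agrees with the paper, which gives no proof of its own and simply quotes the result from \cite{Hamilton1995HarnackEF}. Your outline is the standard argument behind that citation: minimize $Z$ over $V$, run Hamilton's evolution computation with the maximum principle, split via the strong maximum principle in the weakly convex case, and let $\alpha'\to\infty$ for ancient flows. The one step to tighten is ``$H/(2s)$ dominates as $s\downarrow 0$'': this needs the flow to be smooth with controlled derivatives up to $s=0$, which you get by running the argument on $(-\alpha+\varepsilon,T)$ and letting $\varepsilon\to 0$. In the noncompact case this initial positivity is also not uniform in space, and that is exactly the delicate part you rightly defer to the citation.
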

Next, we recall the definition of Colding-Minicozzi entropy, which was first introduced in \cite{coldingminicozzi}. We state the definition for Radon measures in $\mathbf{R}^{n+1}$. One can then understand the entropy of a hypersurface $M$ as the entropy of the Radon measure $\mathcal{H}^n|_{M}$, where $\mathcal{H}^n$ is the $n$-dimensional Hausdorff measure.
\begin{definition}\label{colding minicozzi definition}
    Let $\mu$ be a Radon measure in $\mathbf{R}^{n+1}$ which satisfies 
    \begin{equation}
        \sup_{r > 0}\frac{\mu(B(0, r))}{r^n} < \infty.
    \end{equation}
    Then for each $x_0 \in \mathbf{R}^{n+1}$, $t > 0$, define
    \begin{equation}
        F_{x_0, t}(\mu) = \int_{\mathbf{R}^{n+1}}\frac{1}{(4\pi t)^{n/2}}\exp{(-\frac{|x - x_0|^2}{4t})}d\mu.
    \end{equation}
    Then, the entropy of $\mu$ is defined as
    \begin{equation}
        \lambda(\mu) = \sup_{x_0 \in \mathbf{R}^{n+1}, t > 0}F_{x_0, t}(\mu).
    \end{equation}
\end{definition}
Two key facts about entropy is that it is invariant under translation / dilation, and it is non-increasing under mean curvature flow. \\

Lastly, we recall the recent classification of all asymptotically cylindrical mean curvature flow by Bamler-Lai \cite{Bamler2025-pdeodi}, \cite{Bamler2025-thclassification}. Here, we only state the relevant parts of their work that we will use in section \ref{asymptoticallycylindricalsection}. We refer the reader to the original papers for the complete statement.\\

We first define the space of all `normalized' asymptotically $(n,k)$-cylindrical translators (see definition \ref{def of asymptotically cylindrical translators.}). 
\begin{definition}\label{Definition of MCFsoliton}
$\textup{\textbf{MCF}}^{n,k}_{\textup{soliton}}$ is the space of asymptotically $(n,k)$-cylindrical translating solution, whose $0$-time slice contains the origin, and is invariant under reflection across some collection of $n$-pairwise orthogonal hyperplanes, which pass through the origin and may depend on the flow.
\end{definition}
We now recall the key quantities that parametrize $\textup{\textbf{MCF}}^{n,k}_{\textup{soliton}}$. To do so, we first recall theorem 6.1, proposition 7.1, lemma 7.6 in \cite{Bamler2025-pdeodi}. We summarize the relevant parts in the following proposition. 
\begin{Proposition}\label{asymptotic bevaior of nkflows}
    Let $\mathcal{M} = \{M_t\}_{t \in (-\infty, 0)}$ be an asymptotically $(n,k)$-cylindrical translating solution that is not a bowl soliton with possible Euclidean factors. Then define its type I rescaling
    \begin{equation}
        \Tilde{M}_{\tau} = e^{\frac{\tau}{2}}M_{-e^{-\tau}}, \ \tau (t) = -\ln (-t).
    \end{equation}
Then for sufficiently negative $\Tilde{\tau} > -\infty$, for all $\tau \in (-\infty, \Tilde{\tau}]$, define
\begin{equation}
    R(\tau) = 10\sqrt{\ln(\Tilde{\tau} - \tau + 10)}, \ \ \mathcal{D}_{\tau} = \mathbf{S}^{n-k}(\sqrt{2(n-k)}) \times  B(0, R(\tau)).
\end{equation}
Then there exists a smooth function $u_{\tau} \in C^{\infty}(\mathcal{D}_{\tau})$ so that
\begin{equation}
    \{((1 + u_{\tau})y, x )\ |\ (y,x) \in \mathcal{D}_{\tau}\} \subset \Tilde{M}_{\tau} .
\end{equation}
Moreover, there exists 
\begin{equation}
    U^+(\tau) = U_1 + U_{\frac{1}{2}} +U_0 + .. + U_{-10} : (-\infty, \Tilde{\tau}) \to \mathcal{V}_{1}\oplus\mathcal{V}_{\frac{1}{2}}\oplus\mathcal{V}_{0}\oplus..\oplus \mathcal{V}_{-10},
\end{equation}
and a smooth function
\begin{equation}
    V(\tau) = \sum_{1 \leq i,j\leq k}v_{ij}(\tau)p_{ij}(x) : (-\infty, \overline{T}) \to \mathcal{V}_0,
\end{equation}
where $\mathcal{V}_{\lambda}$ is the $\lambda$-eigenspace of the linearized rescaled mean curvature flow equation at $M^{n,k}_{-1} = \mathbf{S}^{n-k}(\sqrt{2(n-k)}) \times \mathbf{R}^k$ in the usual Gaussian weighted $L^2$ space, and $p_{ij}(x) = \frac{1}{2\sqrt{2}}(x_ix_j - 2\delta_{ij})$ where $(y,x) \in M^{n,k}_{-1}$. These smooth functions satisfy the following properties.
\begin{equation}\label{asymptotic expansion existence by bamler lai}
    (i)\ \|u_{\tau} - U^+(\tau)\|_{C^{0}(\mathcal{D}_{\tau})} \leq \frac{1}{10}(\Tilde{\tau} - \tau + 10)^{-11},
\end{equation}
\begin{equation}\label{appx asymptotic expansion by bamler lai}
   (ii)\ \|U_0(\tau) - V(\tau)\|_{L^2_{w}} \leq C(\Tilde{\tau} - \tau + 10)^{-3} 
\end{equation}
for all sufficiently small $\tau$, where $\|\cdot\|_{L^2_{w}}$ denotes the Gaussian weighted $L^2$ norm. \\\\
(iii) If we view $V(\tau)$ as a $k\times k$ matrix $\{v_{ij}(\tau)\}$, then $V(\tau) \in \mathbf{R}^{k\times k}_{\leq 0}$, $\lim_{\tau \to -\infty}V(\tau) = 0$, and solves the equation
\begin{equation}\label{the ode solved by appx asymptotic expansion}
   \partial_{\tau}V = -\sqrt{2}V^2 + 2\textup{tr}(V^2)V + C^*(n-k)V^3
\end{equation}
with $C^*$ being some dimensional constant. This $V$ uniquely determines a nonnegative $k\times k$ matrix, denoted by $Q(\mathcal{M})$. 
\end{Proposition}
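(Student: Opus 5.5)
This proposition is a summary of results of Bamler--Lai rather than a new claim, so the plan is to extract each item from \cite{Bamler2025-pdeodi}, \cite{Bamler2025-thclassification}, checking that the hypotheses there are met by an asymptotically $(n,k)$-cylindrical translator that is not a bowl soliton with Euclidean factors. The first step is to view $\mathcal{M}$ as an ancient flow. Definition \ref{def of asymptotically cylindrical translators.} says that its blow-downs converge with multiplicity one to $M^{n,k}_{cyl}$. Equivalently, the type I rescaled flow $\Tilde{M}_\tau$ converges locally smoothly to $M^{n,k}_{-1}$ as $\tau \to -\infty$. This is exactly the notion of an asymptotically $(n,k)$-cylindrical ancient flow used by Bamler--Lai.

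The second step gives the graphical representation over $\mathcal{D}_\tau$ together with estimate (i). Here I would invoke their Theorem 6.1, which is a PDE--ODI principle. Because the convergence is locally smooth, for $\tau$ sufficiently negative $\Tilde{M}_\tau$ is a normal graph $u_\tau$ over the cylinder on a growing region. Their analysis shows that the radius can be taken to be $R(\tau)=10\sqrt{\ln(\Tilde\tau-\tau+10)}$. The method decomposes $u_\tau$ into Gaussian-$L^2$ eigenmodes of the linearized operator $L=\Delta-\frac{1}{2}x\cdot\nabla+1$ on $M^{n,k}_{-1}$, whose eigenvalues are half-integers. One then shows that the projection $U^+$ onto the modes with eigenvalue $\geq -10$ dominates, while the remainder obeys a differential inequality forcing it below $(\Tilde\tau-\tau+10)^{-11}$. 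The point of fixing $\Tilde\tau$ very negative is to make the nonlinear errors small enough that this ODI closes on all of $(-\infty,\Tilde\tau]$.

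The third step is Proposition 7.1 and Lemma 7.6 of \cite{Bamler2025-pdeodi}, which give (ii) and (iii). The neutral mode $U_0$ is expanded in the Hermite quadratics $p_{ij}$. Projecting the rescaled MCF equation onto $\mathcal{V}_0$ and keeping the quadratic and cubic terms of the nonlinearity produces the matrix ODE \eqref{the ode solved by appx asymptotic expansion}. The quadratic term gives $-\sqrt2V^2$, and the cubic terms give $2\mathrm{tr}(V^2)V+C^*V^3$. The errors are of higher order and yield the $(\Tilde\tau-\tau+10)^{-3}$ bound on $U_0-V$.

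Finally, $\lim_{\tau\to-\infty} V(\tau)=0$ follows from the convergence of $\Tilde{M}_\tau$ to the cylinder. The sign $V\le 0$ follows because the unstable and neutral modes are nonpositive for a noncompact, inward-bending (non-collapsed) flow; the assumption that $\mathcal{M}$ is not a bowl with Euclidean factors excludes the degenerate case handled separately there. Since the ODE with $V\to0$ has solutions parametrized by their leading asymptotics $V\sim \frac{1}{\sqrt2\,\tau}Q$, this defines the nonnegative matrix $Q(\mathcal{M})$. The hardest part is the closing of the PDE--ODI estimate uniformly up to $\Tilde\tau$. For that I would rely entirely on the cited theorems rather than reprove it, and only verify that the translator setting fits their hypotheses.
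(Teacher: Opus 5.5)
This matches the paper's approach: the paper gives no argument of its own and simply collects Theorem~6.1, Proposition~7.1 and Lemma~7.6 of \cite{Bamler2025-pdeodi}, which are exactly the results you invoke. Your explanatory remarks are not needed and are partly inaccurate, though this does not affect the argument since you defer entirely to the cited statements: for $n-k\geq 3$ the spectrum of $L$ contains non-half-integers such as $-\frac{1}{n-k}$; the sign $V\le 0$ comes from $V$ being an ancient solution of the ODE in (iii) with $V\to 0$, not from any sign property of the unstable modes; and the $\frac{1}{\tau}$-coefficient of such a solution must be an orthogonal projection, so it cannot serve as the nonnegative matrix $Q(\mathcal{M})$.
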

We now recall the definition of $Q$ and $b$, and state the main classification result in \cite{Bamler2025-thclassification}.
\begin{definition}\label{definition of parameter Q,b in bamler lai}[Definition 7.7 in \cite{Bamler2025-pdeodi}, definition 1.5 in \cite{Bamler2025-thclassification}]
For each $\mathcal{M} \in \textup{\textbf{MCF}}^{n,k}_{\textup{soliton}}$, if $\mathcal{M}$ is a bowl soliton with a possible Euclidean factor, then $Q(\mathcal{M}) = 0$. Otherwise, we define $Q(\mathcal{M})$ as in proposition \ref{asymptotic bevaior of nkflows}. Also, we define
    \begin{equation}
        b(\mathcal{M}) = \frac{v}{|v|^2}.
    \end{equation}
    where $v$ is the velocity of the translating solution $\mathcal{M}$. 
\end{definition}
\begin{theorem}\label{parametrization of space of solitons by bamler lai}[Theorem 1.3, theorem 1.6 in \cite{Bamler2025-thclassification}]
Any eternal, asymptotically $(n,k)$-cylindrical mean curvature flow is a translating solution, which after suitable space translation belongs to $\textup{\textbf{MCF}}^{n,k}_{\textup{soliton}}$. Moreover, let $\mathcal{M}, \mathcal{M}' \in \textup{\textbf{MCF}}^{n,k}_{\textup{soliton}}$. If
\begin{equation}
    b(\mathcal{M}) = b(\mathcal{M}'), \ Q(\mathcal{M}) = Q(\mathcal{M}'),
\end{equation}
then $\mathcal{M} = \mathcal{M}'$.
\end{theorem}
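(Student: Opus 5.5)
Since this statement is quoted from \cite{Bamler2025-thclassification}, we only sketch the route we would expect the proof to take, and we use it as a black box in what follows. There are two parts: showing that eternal asymptotically cylindrical flows are translators, and showing that a translator is determined by the pair $(b,Q)$.

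\textbf{First part: eternal flows are translators.} We would first place such a flow $\mathcal{M}$ inside the general theory of ancient asymptotically $(n,k)$-cylindrical flows. By the neck-improvement and canonical neighborhood analysis of \cite{Bamler2025-pdeodi}, $\mathcal{M}$ is convex and non-collapsed. Its type I rescaling $\Tilde{M}_\tau$ then admits the expansion of Proposition \ref{asymptotic bevaior of nkflows}, with the neutral part governed by the matrix ODE \eqref{the ode solved by appx asymptotic expansion}.

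Next we would use eternality together with Hamilton's Harnack inequality (Theorem \ref{Hamilton's Harnack inequality}, in its ancient form \eqref{ancient harnack inequality}). Applied at a point where $H$ attains its space-time supremum, which exists by a compactness argument using the cylindrical asymptotics, the Harnack equality case forces $\mathcal{M}$ to be a translator. This is the Hamilton/Brendle–Choi style rigidity argument. After a translation, and using the reflection symmetries obtained by an Alexandrov moving-plane argument in the asymptotically cylindrical directions, $\mathcal{M}$ lies in $\textup{\textbf{MCF}}^{n,k}_{\textup{soliton}}$.

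\textbf{Second part: uniqueness given $(b,Q)$.} Take $\mathcal{M},\mathcal{M}'$ with equal $b$ and $Q$, and compare their profile functions $u_\tau$ and $u'_\tau$ on the growing domains $\mathcal{D}_\tau$.
\begin{itemize}
\item Equal $b$ means equal velocity. Equal $Q$ means the solutions $V,V'$ of \eqref{the ode solved by appx asymptotic expansion} coincide, since $Q$ is defined as the parameter selecting the ODE trajectory.
\item Hence by (i) and (ii) the neutral parts agree up to $O((\Tilde\tau-\tau)^{-3})$.
\item The unstable modes $\mathcal{V}_1$ and $\mathcal{V}_{1/2}$ correspond to time and space translations. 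We would normalize these using the reflection symmetries and the condition that the $0$-time slice contains the origin.
\item The difference $w=u_\tau-u'_\tau$ then solves a linear parabolic equation $\partial_\tau w=\mathcal{L}w+E$, where $\mathcal{L}$ is the linearized operator at the cylinder and $E$ is quadratically small.
\end{itemize}

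The plan for this part is a Merle–Zaag type ODE lemma on the projections of $w$ onto the unstable, neutral and stable spaces. With the unstable and neutral projections pinned down, the stable projection must dominate. We would then iterate the polynomial bounds to show that $\|w\|_{L^2_w}$ decays faster than any power of $(\Tilde\tau-\tau)$ as $\tau\to-\infty$. A backward-uniqueness or Carleman estimate for the rescaled flow would then give $w\equiv0$ near the cylinder. Unique continuation of the translator equation would extend this to $\mathcal{M}=\mathcal{M}'$ everywhere.

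\textbf{Main obstacle.} The hardest step is controlling the neutral mode. The $\mathcal{V}_0$ directions decay only polynomially, so one has to show that matching $V$ (equivalently $Q$) removes all neutral freedom up to error smaller than the stable decay rate. This is exactly where the higher-order expansion $U_0,\dots,U_{-10}$ and the precise ODE with the constant $C^*$ would be needed.
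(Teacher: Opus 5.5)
The paper gives no proof of this statement: it imports Theorems 1.3 and 1.6 of \cite{Bamler2025-thclassification}. Using it as a black box is therefore exactly right. The route you sketch, however, has genuine failures and should not be offered as a justification.

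\textbf{First part.} Hamilton's rigidity theorem needs the space-time supremum of $H$ to be attained at a point of $\mathcal{M}$ itself. For an eternal convex flow, the Harnack inequality with $V=0$ gives $\nabla_t H\ge 0$. So that supremum is approached as $t\to+\infty$, where asymptotic cylindricity (a statement about $t\to-\infty$) gives no information. A recentering and compactness argument only produces a \emph{limit} flow on which the maximum is attained. That shows some forward limit is a translator, which is precisely the mechanism the paper uses for its own forward limits; it does not show that $\mathcal{M}$ is one.

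\textbf{Second part.} The Merle--Zaag dichotomy runs the other way from what you claim. As $\tau\to-\infty$, the component of $w$ in the negative eigenspaces would grow backward in time, so it is always controlled by the nonnegative part and can never dominate. The actual alternatives are that the neutral modes dominate or that the modes in $\mathcal{V}_1\oplus\mathcal{V}_{1/2}$ dominate.

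More seriously, decay of $\|w\|_{L^2_w}$ faster than every power of $|\tau|$ does not force $w\equiv 0$, and no backward uniqueness applies at that level. Modes in $\mathcal{V}_1$ and $\mathcal{V}_{1/2}$ decay like $e^{\tau}$ and $e^{\tau/2}$. They are generated by time shifts, by translations transverse to the axis, and by changes of $b$. The axial linear mode in the direction of motion records the slope of the singular time along the axis, i.e.\ $b$. It is not a translation, and reflection symmetries do not remove it.

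A concrete counterexample to your last step: take the bowls of speeds $1$ and $2$, both normalized in $\textup{\textbf{MCF}}^{n,1}_{\textup{soliton}}$. Both have $Q=0$ and satisfy all your normalizations. Yet their rescaled profiles differ at leading order by a nonzero multiple of $e^{\tau/2}x$, with $x$ the axial coordinate. Your final step never uses $b$, so it would identify them. The paper's own application shows the same phenomenon: $\hat{N}_\tau$ and $\overline{N}_\tau$ differ by $O(e^{\tau/2})$ while being distinct flows until translated.

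What is actually needed is decay faster than $e^{\tau}$, i.e.\ vanishing of the $\mathcal{V}_1\oplus\mathcal{V}_{1/2}$ components of $w$. Obtaining this requires converting equality of $b$, and the normalization defining $\textup{\textbf{MCF}}^{n,k}_{\textup{soliton}}$, into information about those projections. Your sketch omits that step, and it is the real content of the uniqueness theorem.
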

Lastly, we record a useful property of solutions to the ODE \eqref{the ode solved by appx asymptotic expansion}.
\begin{lemma}\label{key property of ODE solution}[Lemma 7.32 in \cite{Bamler2025-pdeodi}]
    Let $V_1, V_2$ be two ancient solutions to the ODE \eqref{the ode solved by appx asymptotic expansion}. Suppose $V_1, V_2 \to 0$ as $\tau \to -\infty$, and $\|V_1(\tau) - V_2(\tau)\|\leq C|\tau|^{-3}$ for sufficiently negative $\tau$, then $V_1 = V_2$.
\end{lemma}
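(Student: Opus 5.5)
The plan is to argue by a backward-in-time lower bound on the difference. Set $W=V_1-V_2$. I will show that $|W|$ cannot decay faster than roughly $|\tau|^{-2}$ as $\tau\to-\infty$ unless $W\equiv 0$. This contradicts the hypothesis $\|W\|\le C|\tau|^{-3}$.

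\emph{Step 1: sharp decay of each solution.} The right-hand side of \eqref{the ode solved by appx asymptotic expansion} has the form $p(V)$ with $p(V)=-\sqrt2 V^2+2\mathrm{tr}(V^2)V+C^*V^3$. At each time it is a polynomial in $V$ whose coefficients depend only on $\mathrm{tr}(V^2)$. Hence an ancient solution stays diagonal in a fixed orthonormal eigenbasis of any one of its time slices. Concretely, the flow restricted to the commutative algebra generated by $V(\tau_0)$ is a well-posed ODE, and uniqueness for this ODE applies. Each eigenvalue $\lambda_i$ therefore solves the scalar system
\begin{equation*}
\lambda_i'=-\sqrt2\lambda_i^2+2\Big(\sum_j\lambda_j^2\Big)\lambda_i+C^*\lambda_i^3 .
\end{equation*}
Write $\mu_i=-\lambda_i\ge 0$, using $V\in\mathbf{R}^{k\times k}_{\le0}$. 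If $\mu_i$ vanishes at one time it vanishes identically. Otherwise $\mu_i>0$ and $\mu_i\to0$, so
\begin{equation*}
(1/\mu_i)'=-\sqrt2\,(1+O(\max_j\mu_j)).
\end{equation*}
Integrating from $\tau$ up to a fixed $\tau_0$ gives $\mu_i(\tau)\le \frac{1+\varepsilon}{\sqrt2|\tau|}$ for $\tau$ sufficiently negative. Consequently
\begin{equation*}
\|V_\ell(\tau)\|_{op}\le \frac{1+\varepsilon}{\sqrt2|\tau|}, \qquad \|V_\ell(\tau)\|^2\le C\tau^{-2}, \qquad \ell=1,2 .
\end{equation*}

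\emph{Step 2: differential inequality for $W$.} Subtracting the two equations gives
\begin{equation*}
W'=-\sqrt2(V_1W+WV_2)+E ,
\end{equation*}
where the error term $E$ collects the cubic terms. Using Step 1, it satisfies $|E|\le C\tau^{-2}|W|$. Then
\begin{equation*}
\frac{d}{d\tau}|W|^2=-2\sqrt2\,\mathrm{tr}\big(W(V_1W+WV_2)\big)+2\langle W,E\rangle\le \Big(\frac{4(1+\varepsilon)}{|\tau|}+\frac{C}{\tau^2}\Big)|W|^2 .
\end{equation*}
The point is that I use the operator norm bound on $V_\ell$, not any sign information. So wherever $W\neq0$, $\frac{d}{d\tau}\log|W|^2\le \frac{4(1+\varepsilon)}{|\tau|}+C\tau^{-2}$.

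\emph{Step 3: conclusion.} Suppose $W(\tau_0)\neq0$ for some very negative $\tau_0$. By ODE uniqueness, $W(\tau)\ne0$ for all $\tau$. Integrating the inequality from $\tau<\tau_0$ to $\tau_0$ gives
\begin{equation*}
|W(\tau)|^2\ge c\,|W(\tau_0)|^2\Big(\frac{|\tau_0|}{|\tau|}\Big)^{4(1+\varepsilon)} .
\end{equation*}
Hence $|W(\tau)|\ge c'|\tau|^{-2-2\varepsilon}$. Choosing $\varepsilon<1/2$, this contradicts $|W(\tau)|\le C|\tau|^{-3}$ as $\tau\to-\infty$. Therefore $W(\tau_0)=0$ for every sufficiently negative $\tau_0$. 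By uniqueness for the ODE, $V_1=V_2$ on their whole common interval of existence.

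The main obstacle is Step 1, namely getting the sharp constant $\frac{1}{\sqrt2}$ in $\|V\|_{op}\lesssim \frac{1}{\sqrt2|\tau|}$. Any constant up to $\frac{3}{4\sqrt2}\cdot 2$ (that is, with exponent $2(1+\varepsilon)<3$ in Step 3) would suffice. This step requires the diagonalization and scalar comparison argument. The only delicate points there are that eigenbases are preserved and that nonzero eigenvalues never hit zero, and both follow from uniqueness of ODE solutions.
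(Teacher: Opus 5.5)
Your argument is correct. The paper does not prove this lemma; it is quoted from Bamler--Lai, so there is no in-paper argument to compare with, and yours is a self-contained proof.

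The mechanism is the natural one. Distinct ancient solutions $\frac{1}{\sqrt2(\tau-a)}$ of the model equation $\lambda'=-\sqrt2\lambda^2$ differ by about $|\tau|^{-2}$; that is, the free parameters of an ancient solution enter only at order $\tau^{-2}$. Your Gr\"onwall estimate makes this rigorous in the matrix setting with no linearization. It combines the exact identity $V_1^2-V_2^2=V_1W+WV_2$ with the operator-norm bound $\|V_\ell\|_{op}\le\frac{1+\varepsilon}{\sqrt2|\tau|}$.

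Compare this with proving the lemma through explicit asymptotics of ancient solutions, reading off and matching the order-$\tau^{-2}$ data. Your route needs only the leading decay rate and does not depend on the exact form of the cubic terms. It even shows that $\|V_1-V_2\|\le C|\tau|^{-2-\delta}$ for any $\delta>0$ already forces $V_1=V_2$. What it does not provide is the finer parametrization of solutions, which this lemma does not need.

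Two points to tidy up.

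\emph{The formula in Step 1.} The identity $(1/\mu_i)'=-\sqrt2\,(1+O(\max_j\mu_j))$ is not correct as written. In fact
\[
\Big(\frac{1}{\mu_i}\Big)'=-\sqrt2-\frac{2\sum_j\mu_j^2}{\mu_i}-C^*\mu_i ,
\]
and the middle term is not $O(\max_j\mu_j)$ when $\mu_i\ll\max_j\mu_j$. It has the favorable sign, however, so $(1/\mu_i)'\le-\sqrt2+|C^*|\mu_i$. That one-sided inequality is all your upper bound on $\mu_i$ uses.

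\emph{Negative semidefiniteness.} This is not among the hypotheses of the lemma as stated. It holds in the application by part (iii) of Proposition \ref{asymptotic bevaior of nkflows}. It is also automatic for ancient solutions tending to $0$. Each eigenvalue keeps its sign, since an eigenvalue vanishing at one time vanishes identically. Let $\lambda_+>0$ be the largest positive eigenvalue. Then for $\tau$ very negative
\[
\frac{d}{d\tau}\log\lambda_+\le 2\sum_{\lambda_j<0}\lambda_j^2\le C\tau^{-2},
\]
where the last bound is your Step 1 applied to the negative eigenvalues. Hence $\lambda_+$ stays bounded below as $\tau\to-\infty$, contradicting $V\to0$.
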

\section{\centering{Existence of graphical solution}}\label{existence of graphical solution section}
In this section, we prove the existence of longtime graphical solution to mean curvature flow over a slab $\Omega^n_b = \mathbf{R}^{n-1} \times (-b, b)$ with continuous initial data. Following the ideas in \cite{bowlstability}, we construct a sequence of entire graph solutions to mean curvature flow with approximate initial data. We show that one can take a $C^{\infty}_{loc}(\Omega^n_b)$ subsequential limit, which is the desired graphical solution over a slab. Due to standard quasilinear PDE theory, this can be done by obtaining an apriori $C^1$ estimate of graphical solutions over $\Omega^n_b$. $C^0$ estimate is obtained by using ancient pancakes of Bourni et.al \cite{ancientpancake} as barriers. Then, one can apply the gradient estimate of Colding - Minicozzi \cite{gradientestimate} to obtain the desired $C^1$ estimate. \\

We restate theorem \ref{existence of graphical solution rough statement intro} for the reader's convenience. 
\begin{theorem}[Theorem \ref{existence of graphical solution rough statement intro}]\label{Graphical solution existence}
    For each $b > 0$, $n \geq 1$, let $\Omega^n_b = \mathbf{R}^{n-1} \times (-b,b)$. For given $u_0 \in C^0(\Omega^n_b)$, there exists $u \in C^{\infty}(\Omega^n_b \times (0, \infty)) \cap C^0(\Omega^n_b \times [0, \infty))$ so that $u$ solves the initial value problem
    \begin{equation}\label{Grahical mcf IVP}
        \begin{cases}
         \frac{\partial u}{\partial t} = \sqrt{1 + |Du|^2}\textit{\emph{div}}(\frac{Du}{\sqrt{1 + |Du|^2}}) \text{ in }\Omega^n_b \times (0,\infty) \\
         u(x, 0) = u_0 \text{ in }\Omega^n_b
        \end{cases}
    \end{equation}
    If we further assume that for each $J > 0$, one can find $\delta > 0$ so that $|u_0(x)| > J$ for all $x \in \Omega^n_b$ with $b - \delta < |x_n| < b$, then we can find $u$ so that each time slice also has the same property. In particular, the graphs of $u(\cdot, t)$ are complete, graphical hypersurfaces in $\mathbf{R}^{n+1}$ moving by mean curvature.
\end{theorem}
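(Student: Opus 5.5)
We follow the approximation scheme of \cite{bowlstability}, \cite{Clutterbuck2005ParabolicEW}. First, we extend the data to entire graphs. For each $j$, pick a continuous (even smooth, after mollification) function $u_{0,j}:\mathbf{R}^n\to\mathbf{R}$ that agrees with $u_0$ on $K_{j,1/j}$ up to an error $1/j$ in $C^0$. Outside $\Omega^n_{b-1/(2j)}$ it should be extended steeply: by $+j^2$ (or by $\pm$ large values matching the sign of $u_0$ near the boundary in the second part of the theorem), and by truncations $\max(\min(u_0,j),-j)$-type modifications in between. By Ecker--Huisken \cite{eckerhuisken}, \cite{Ecker1991InteriorEF} and Clutterbuck \cite{Clutterbuck2005ParabolicEW}, each $u_{0,j}$ generates a smooth entire graphical solution $u_j$ on $\mathbf{R}^n\times(0,\infty)$, continuous up to $t=0$. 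The goal is to extract a $C^\infty_{loc}(\Omega^n_b\times(0,\infty))$ limit, which by standard quasilinear parabolic theory (Ladyzhenskaya--Ural'tseva, Krylov--Safonov, Schauder, or the interior estimates of \cite{Ecker1991InteriorEF}) reduces to locally uniform $C^0$ and $C^1$ bounds on compact subsets $K_{r,\lambda}\times[0,T]$.

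\textbf{$C^0$ estimate.} This is local and uses the ancient pancakes of definition \ref{def of ancient pancake} as barriers, following the plan announced at the start of the section. Given $x_0\in K_{r,\lambda}$, rescale and translate a pancake $\Sigma^n_t$ so that it sits inside $\Omega^n_b\times\mathbf{R}$, with $x_n$-width slightly less than $b$, and is tilted so that it lies in the region enclosed below (respectively above) the graph near $x_0$. The pancake is compact and convex with bounded-in-time width, and by \eqref{model pancake asymptotic data} its horizontal radius is large for very negative times. Choose the starting time so that the pancake at that time is centred at height $\min_{K}u_{0,j}-1$ (uniformly bounded in $j$) over a compact set avoiding the boundary. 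The avoidance principle for compact versus complete graphical flows then shows that $u_j(x_0,t)$ stays above the pancake's top until its extinction time, which is of order $b^2$. Iterating this (restarting with new pancakes) gives $u_j\ge -C(r,\lambda,T)$, and symmetrically an upper bound. An alternative I would try if the pancake bookkeeping fails is to use large spheres and grim-reaper-type cylinders, but pancakes are what fit the slab. For the additional statement, the same barriers placed near $|x_n|\to b$ show that $|u_j|$ stays $>J$ in a uniform $\delta$-neighbourhood of the boundary for $t\le T$. Since $u_{0,j}$ is huge there, a pancake of small $x_n$-width $\delta$ sitting at height $J+1$ survives for time $\sim\delta^2$. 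This step needs care, since it only controls short times. To control long times I would instead compare with translated grim-reaper planes or with the solution itself using the maximum principle on $\{|x_n|>b-\delta\}$, which is the part I am least sure of.

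\textbf{$C^1$ estimate and passage to the limit.} With locally uniform $C^0$ bounds on $\Omega^n_b\times[0,T]$, apply the interior gradient estimate of Colding--Minicozzi \cite{gradientestimate}. This estimate requires only a height bound on a parabolic cylinder and gives $|Du_j|\le C$ on a smaller cylinder, for positive times, with a bound depending on the time from $0$. This yields uniform $C^1$ and then, via \cite{Ecker1991InteriorEF}, uniform $C^k$ bounds on compact subsets of $\Omega^n_b\times(0,\infty)$. A diagonal subsequence converges to a smooth solution $u$. Continuity up to $t=0$ follows as in \cite{Clutterbuck2005ParabolicEW}: local spherical barriers around $(x,u_0(x)\pm\varepsilon)$, with radius chosen by the modulus of continuity of $u_0$ on a compact set, give $|u_j(x,t)-u_0(x)|\le 2\varepsilon$ for $t\le t(\varepsilon)$ uniformly in $j$. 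Completeness of the graphs then follows from $|u(\cdot,t)|\to\infty$ at $\partial\Omega^n_b$.

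The main obstacle I expect is the $C^0$ estimate: placing the pancake barriers so that they are uniform in $j$ and do not interact with the artificial extension of the data outside the slab, and keeping the boundary blow-up property for all times rather than only short ones.
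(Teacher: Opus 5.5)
Your overall architecture matches the paper's: entire-graph approximations, pancake barriers for a local $C^0$ bound, the Colding--Minicozzi gradient estimate, Ecker--Huisken for higher derivatives, and spheres for continuity at $t=0$. However, both barrier steps fail as you describe them.

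\textbf{The $C^0$ bound.} A pancake that fits in the slab must have its short axis along $e_n$. By $O(1)\times O(n)$ symmetry, its long radius $r_l(t)$ is attained in every direction orthogonal to $e_n$, in particular along $e_{n+1}$. Tilting the short axis away from $e_n$ pushes a long direction out of the slab. After rescaling to $x_n$-half-width $\lambda$, $r_l(t)\approx\frac{\pi|t|}{2\lambda}$. So a pancake with large horizontal radius cannot be ``centred at height $\min_K u_{0,j}-1$'' below the graph; it meets the graph.

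Suppose instead you use late-stage pancakes (lifetime $O(\mu^2)$ for half-width $\mu$, as your ``extinction time of order $b^2$'' suggests) and iterate. Each restart needs a bound, at the restart time, on the pancake's shadow, which reaches $\mu_k$ closer to $\partial\Omega^n_b$. Tracing back to $t=0$ forces $\sum_k\mu_k\lesssim b$, while the time covered is $\lesssim\sum_k\mu_k^2\lesssim b^2$, so you never reach arbitrary $T$. Spheres that fit in the slab have the same defect, and grim-reaper cylinders are noncompact in $x'$, so they cannot be placed under an arbitrary continuous $u_0$. The width must also be tied to $\operatorname{dist}(x_0,\partial\Omega^n_b)$, not ``slightly less than $b$''.

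The missing idea is to exploit ancientness once. Take the pancake of half-width $\lambda$ at time $-(T_0+2T)$, so it outlives $[0,T]$. Centre it on $\{x_0\}\times\mathbf{R}$ at height $\pm\bigl(\sup_{K_{r+R,\lambda}}|u_0|+r_l(-T_0-2T)+\tfrac12\bigr)$, with $R\approx r_l(-T_0-2T)$ so that its shadow lies in $K_{r+R,\lambda}$. Then read the bound off the recession of its vertical tip:
$|u(x_0,t)|\le\sup_{K_{r+R,\lambda}}|u_0|+\tfrac12+r_l(-T_0-2T)-r_l(-T_0-T)\le\sup_{K_{r+R,\lambda}}|u_0|+\frac{\pi T}{2\lambda}+C(n)$ for $t\in[0,T]$. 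No iteration is needed.

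\textbf{Boundary behaviour.} A thin pancake in the strip $\{b-\delta<|x_n|<b\}$ cannot in general give the property even at a fixed time $t>0$. Either it dies by time $O(\delta^2)$, or, if started earlier, its tip recedes at speed $\sim\pi/(2\delta)$. The lower bound is then roughly $J'-\frac{\pi t}{2\delta(J')}$, and $\delta(J')$ may shrink arbitrarily fast as $J'\to\infty$.

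You need a barrier whose speed is independent of $\delta$. This is possible because it may sit mostly \emph{outside} the slab, where the approximating data are huge. The paper uses the downward-translating grim reaper $\mathrm{Graph}(\ln\cos x_n)-te_{n+1}$, shifted to live over $b-\delta<x_n<b+\pi-\delta$ and raised to equal $2J+T$ at $x_n=b$ (symmetrically at $-b$). Here $\delta$ is chosen so that the data exceed $2J+T$ on $\{b-\delta<|x_n|<b\}$. Outside the slab the data exceed its maximum $2J+T-\ln\sin\delta$ for large $j$; your $+j^2$ extension provides this. So the barrier starts below the approximants, and avoidance gives
$u_j(x,t)\ge\ln\cos(b+\frac{\pi}{2}-\delta-|x_n|)-\ln\cos(\frac{\pi}{2}-\delta)+2J+T-t$.
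At $|x_n|=b$ this equals $2J+T-t\ge 2J$. By continuity it is $\ge J$ on a strip $b-\tilde\delta<|x_n|<b$, with $\tilde\delta$ independent of $j$ and of $t\in[0,T]$. Without this step, completeness of the limit graphs is not established.
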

We now begin the proof of theorem \ref{Graphical solution existence}. We first establish a $C^0$ estimate. 
\begin{lemma}[$C^0$ estimate]\label{C^0 estimate}
    Let $u \in C^{\infty}(\Omega^n_b \times (0, \infty)) \cap C^0(\Omega^n_b \times [0, \infty))$ be the solution to the initial value problem \eqref{Grahical mcf IVP}. Then, for each $T > 0$, $r > 0$, $\lambda \in (0,\frac{1}{2}\min(\frac{\pi}{2},b))$, there exists $R = R(n, \lambda, T)$, and $Q = Q(n,\lambda, T)$ so that we have the $C^0$ estimate
 \begin{equation}\label{C0 estimate equation}
     \|u\|_{C^0(K_{r, 2\lambda}\times [0, T])} \leq \|u_0\|_{C^0(K_{ r + R, \lambda})} + Q(n, \lambda, T),
 \end{equation}
 where $K_{ r, \lambda}$ is given by \eqref{definition of domain}.  Moreover, $R$ is given by
\begin{align}\label{radius equation}
    R(n, \lambda,  T) = \frac{\pi (2T + T_0(n))}{2\lambda} + \frac{2(n-1)\lambda}{\pi}\ln (\frac{\pi^2(2T + T_0(n))}{4\lambda^2})+ \frac{2\lambda C(n)}{\pi} + 1,
\end{align}
and $Q$ is given by
\begin{equation}\label{extra c0 quantity estimate}
    Q(n, \lambda, T) = \frac{\pi T}{2\lambda} + \frac{2(n-1)\lambda}{\pi}\ln 2 + 1
\end{equation}
\end{lemma}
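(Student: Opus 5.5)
The plan is to use rescaled and translated copies of the ancient pancake $\{\Sigma^n_t\}$ from definition \ref{def of ancient pancake} as compact barriers, together with the avoidance principle. We treat the upper bound; the lower bound follows by applying the same argument to $-u$, which solves the same equation. Fix a point $x^0 = (\hat x^0, x^0_n) \in K_{r,2\lambda}$. Rescale the pancake parabolically by the factor $\frac{2\lambda}{\pi}$, so that it lies in a slab of half-width $\lambda$ in the $x_n$ direction. Start it at a very negative time $-T_1$, with $T_1$ chosen below, so that it lives on $[0,T]$ as the rescaled time interval $[-T_1,-T_1+T]$. The parameter $T_0(n)$ is chosen so that the pancake is already in its asymptotic regime: the radius estimate \eqref{model pancake asymptotic data} holds with error at most $1$ in $C(n)$, and the thickness in $x_n$ is at least $\frac{\pi}{2}-\frac14$, say. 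After scaling, the $x_n$-width of $\Sigma$ on the time interval is comparable to $\lambda$. Its extent in the directions orthogonal to $e_n$ inside $\mathbf{R}^{n+1}$, which include the vertical direction $e_{n+1}$ and the $\mathbf{R}^{n-1}$ directions, is $\frac{2\lambda}{\pi}\bigl(s+(n-1)\ln s+C(n)\bigr)$ with $s=\frac{\pi^2}{4\lambda^2}(T_1-t)$.

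Take $T_1 = \frac{\pi^2(2T+T_0)}{4\lambda^2}$. The $O(n)$-symmetric plane of the pancake is then the one spanned by the $\mathbf{R}^{n-1}$ and $e_{n+1}$ directions, and the pancake is centered at $(\hat x^0, x_n^0, h)$. The height $h$ is chosen so that the initial pancake lies strictly above the graph of $u_0$. Its horizontal radius at time $0$ is at most $\frac{2\lambda}{\pi}(s_0+(n-1)\ln s_0 + C)$ with $s_0=\frac{\pi^2(2T+T_0)}{4\lambda^2}$, which, after adding $1$ for safety, is exactly the $R$ in \eqref{radius equation}. Hence the initial pancake sits over $B(\hat x^0,R)\times(x_n^0-\lambda,x_n^0+\lambda)\subset K_{r+R,\lambda}$, and its lowest point over that set is at height $h-\text{radius}(0)$. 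Choosing $h = \sup_{K_{r+R,\lambda}}u_0 + \text{radius}(0)+\epsilon$ places it above the graph.

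By the avoidance principle for a compact flow against the complete graphical flow (or, more concretely, the comparison principle applied to the region under the graph), the pancake stays above the graph for $t\in[0,T]$. Here the pancake's projection stays inside the slab for all times, and the graph is proper near the boundary of its projection, or one argues via the entire-graph approximations used in the construction, so no contact can occur at the boundary. Consequently, $u(x^0,t)$ is at most the lowest point of the pancake at time $t$ directly above $x^0$. Since the vertical direction is an $O(n)$-symmetry direction, the point directly below the center, at $x_n=x_n^0$, has height $h-\text{radius}(t)$. The radius decreases from its value at time $0$ by at most $\frac{2\lambda}{\pi}\bigl[(s_0-s_T)+(n-1)\ln(s_0/s_T)\bigr]+1$. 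With $s_0-s_T = \frac{\pi^2T}{4\lambda^2}$ and $s_0/s_T\le 2$ (since $T_1\ge 2T$), this loss is $\frac{\pi T}{2\lambda}+\frac{2(n-1)\lambda}{\pi}\ln 2+1 = Q$. This gives the claimed bound.

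The main obstacle is justifying the avoidance principle between a compact barrier and a noncomplete graph over a slab. In particular, one must ensure that the graph's behaviour at $\partial\Omega_b$ does not permit touching. I would handle this by proving the estimate for the smooth entire-graph approximations used in the existence construction, where the standard maximum principle applies, and then passing to the limit. A secondary delicate point is the bookkeeping of the $o(1)$ terms in \eqref{model pancake asymptotic data}, which is absorbed by the choice of $T_0(n)$ and the $+1$ terms.
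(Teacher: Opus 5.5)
Your proposal is correct and follows essentially the same route as the paper. The paper takes the pancake rescaled by $\frac{2\lambda}{\pi}$, starts it at rescaled time $-(T_0+2T)$, and centers it over $x_0$ at height $\|u_0\|_{C^0(K_{r+R,\lambda})}+r_l+\frac12$. The bound then comes from $r_l(-T_0-2T,\lambda)-r_l(-T_0-T,\lambda)$, and the lower bound from a mirrored barrier placed below the graph.

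Your worry about the slab boundary is unnecessary. The barrier is nested in time, so its projection stays in the compact set $\overline{K_{r+R,\lambda}}\subset\Omega^n_b$, and avoidance applies directly to any solution in the stated class. Going through the entire-graph approximations, by contrast, would only cover the constructed solution.

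One small slip: in unrescaled pancake time the interval is $[-T_1,-T_1+\frac{\pi^2}{4\lambda^2}T]$, not $[-T_1,-T_1+T]$. Your later computation of $s_0-s_T$ already uses the correct scaling, so the bound is unaffected.
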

\begin{proof}[Proof of lemma \ref{C^0 estimate}]
Let $\{\Sigma^n_t\}_{t\in(-\infty, 0)}$ be the ancient pancake solution given in definition \ref{def of ancient pancake}. Then in view of \eqref{model pancake asymptotic data}, one can find fixed $T_0 = T_0(n) > 0$ so that whenever $t \leq -T_0$, then
\begin{align}\label{model pancake radius estimate}
     0 < \frac{\pi}{2}- \min_{p \in \Sigma_t}|p| < \frac{1}{4} , \ \ \ |\max_{p \in \Sigma_t}|p| - (-t + (n-1)\ln(-t) + C)| < \frac{1}{4}.
\end{align}
Then, for each $\lambda \in (0,\frac{1}{2}\min(\frac{\pi}{2}, b))$, we can consider the parabolically rescaled ancient pancakes
\begin{equation}\label{definition of rescaled ancient pancakes}
    \Sigma^{n, \lambda}_t = \frac{2\lambda}{\pi} \Sigma^n_{(\frac{2\lambda}{\pi})^{-2}t}.
\end{equation}
Set 
\begin{equation}\label{def of long and short radius}
    r_s(t,\lambda) = \min_{p \in \Sigma^{n, \lambda}_t}|p|, \ \ \ r_l(t, \lambda) =  \max_{p \in \Sigma^{n, \lambda}_t}|p|.
\end{equation}
The inequalities \eqref{model pancake radius estimate} with $\lambda \leq \frac{\pi}{2}$ imply that whenever $t \leq -T_0(n)$, then $(\frac{2\lambda}{\pi})^{-2}t \leq t \leq -T_0(n)$, hence
\begin{equation}\label{short radius estimate for rescaled ancient pancakes}
    0 < \lambda - r_s(t,\lambda)   < \frac{\lambda}{2\pi} \leq \frac{1}{4},
\end{equation}
and
\begin{equation}\label{long radius estimate for rescaled ancient pancakes}
    |r_l(t, \lambda) - (-\frac{\pi t}{2\lambda}+ \frac{2(n-1)\lambda}{\pi}\ln(-\frac{\pi^2t}{4\lambda^2}) + \frac{2\lambda C}{\pi})| \leq \frac{\lambda}{2\pi} \leq \frac{1}{4}.
\end{equation}
We now prove lemma \ref{C^0 estimate}. Let $u$ be given as in lemma \ref{C^0 estimate}. Fix any $x_0 \in K_{ r, 2\lambda}$, and $T > 0$. We consider the $-(T_0 + 2T)$-time slice of translated rescaled ancient pancakes
\begin{equation}\label{def of upper barriers}
    M^{n, \lambda, x_0, +}_{-T_0 - 2T } = \Sigma^{n, \lambda}_{-T_0 -2T}  + x_0 + (\|u_0\|_{C^0(K_{ r + R, \lambda})} + r_l(-T_0-2T, \lambda) + \frac{1}{2})e_{n+1},
\end{equation}
and
\begin{equation}\label{def of lower barriers}
    M^{n, \lambda, x_0, -}_{-T_0 - 2T } = \Sigma^{n, \lambda}_{-T_0 -2T}  + x_0 - (\|u_0\|_{C^0(K_{r + R, \lambda})} + r_l(-T_0-2T, \lambda) + \frac{1}{2})e_{n+1},
\end{equation}
where $r_l$ is given by \eqref{def of long and short radius}. 
Then by definitions \eqref{def of upper barriers}, \eqref{def of lower barriers}, inequalities \eqref{short radius estimate for rescaled ancient pancakes}, \eqref{long radius estimate for rescaled ancient pancakes}, and the definition of $R$ in \eqref{radius equation}, we see that
\begin{equation}\label{good boundary behavior}
    M^{n, \lambda, x_0,+}_{-T_0 - 2T},M^{n, \lambda, x_0,-}_{-T_0 - 2T} \subset K_{r + R, \lambda} \times \mathbf{R},
\end{equation}
and 
\begin{equation}\label{initial avoidance}
     M^{n, \lambda, x_0,+}_{-T_0 - 2T},M^{n, \lambda, x_0,-}_{-T_0 - 2T}, \text{ and graph of }u_0 \text{ are pairwise disjoint}.
\end{equation}
We now evolve $M^{n, \lambda, x_0,+}_{-T_0 - 2T},M^{n, \lambda, x_0,-}_{-T_0 - 2T}, \text{ and graph of }u_0$ by mean curvature flow up to time $t = T$. Due to \eqref{good boundary behavior} and \eqref{initial avoidance}, we can conclude by avoidance principle that  
\begin{equation}\label{later avoidance}
     M^{n, \lambda, x_0,+}_{-T_0 - 2T + t},M^{n, \lambda, x_0,-}_{-T_0 - 2T+ t}, \text{ and graph of }u(\cdot, t) \text{ are pairwise disjoint for all }0 \leq t \leq T.
\end{equation}
Note that by \eqref{def of upper barriers}, \eqref{def of lower barriers} and the symmetry of ancient pancake solution, the centers of $M^{n, \lambda, x_0,+}_{-T_0 - 2T + t},M^{n, \lambda, x_0,-}_{-T_0 - 2T+ t}$ lie on the line $\{x_0\} \times \mathbf{R}$. Then by \eqref{later avoidance}, we can obtain the estimate
\begin{equation}\label{C0 bound}
    |u(x_0, t)| \leq \|u_0\|_{C^0(K_{ r + R, \lambda})} + \frac{1}{2} + r_l(-T_0 - 2T, \lambda) - r_l(-T_0 -T, \lambda)
\end{equation}
Because $-T_0 - 2T, -T_0 - T \leq -T_0$, we can apply \eqref{long radius estimate for rescaled ancient pancakes}. This gives us 
\begin{align*}
        \frac{1}{2} + r_l(-T_0 - 2T, \lambda) &- r_l(-T_0 -T, \lambda)  \\ \leq &\frac{1}{2} + [(\frac{\pi (T_0 + 2T)}{2\lambda}+ \frac{2(n-1)\lambda}{\pi}\ln(\frac{\pi^2(T_0 + 2T)}{4\lambda^2}) + \frac{2\lambda C}{\pi}) + \frac{1}{4}]\\ & - [(\frac{\pi (T_0 + T)}{2\lambda}+ \frac{2(n-1)\lambda}{\pi}\ln(\frac{\pi^2(T_0 + T)}{4\lambda^2}) + \frac{2\lambda C}{\pi}) - \frac{1}{4}]\\ \leq & \frac{\pi T}{2\lambda} + \frac{2(n-1)\lambda}{\pi}\ln 2 + 1
\end{align*}
Combining with \eqref{C0 bound}, and recalling the definition of $Q$ given by \eqref{extra c0 quantity estimate}, we finally have the desired estimate \eqref{C0 estimate equation}.
\end{proof}
Once we have a $C^0$ bound, we can apply the gradient estimate in \cite{gradientestimate} to obtain a $C^1$ estimate. Let us first recall the gradient estimate in \cite{gradientestimate}.
\begin{lemma}[Theorem 1 in \cite{gradientestimate}]\label{gradient estimate of colding minicozzi}
   There exists $C = C(n)$ so that if the graphs of $u(\cdot, t) : B(0, \mu)  \to \mathbf{R}$, $t \in  [0, \frac{\mu^2}{1 + 2n}]$ flows by mean curvature, then
   \begin{equation}
       |Du|(0, \frac{\mu^2}{4n(1 + 2n)}) \leq \exp(C(1 + \frac{\|u(\cdot, 0)\|_{\infty}}{\mu})^2).
   \end{equation}
\end{lemma}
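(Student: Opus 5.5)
Since this lemma is quoted from \cite{gradientestimate}, I only sketch how I would reprove it. I would argue by a Korevaar-type maximum principle for $\log v$ with a cutoff depending on the height $u$. First reduce to $\mu=1$ by parabolic rescaling $u_\mu(x,t)=\mu^{-1}u(\mu x,\mu^2 t)$. This changes $\|u(\cdot,0)\|_\infty$ to $\|u(\cdot,0)\|_\infty/\mu$, which is exactly the quantity in the claimed bound, so it suffices to treat $B(0,1)$, $t\in[0,\frac{1}{1+2n}]$. Set $M=1+\|u(\cdot,0)\|_\infty$.

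\textbf{Step 1: height bound.} I would first show that $|u|\le M+C(n)$ on $B(0,\frac34)\times[0,\frac{1}{1+2n}]$, using shrinking spheres as barriers in the spirit of Lemma \ref{C^0 estimate}. Place a sphere of radius comparable to $1$ over a point of $B(0,\frac34)$, entirely above height $M$ at $t=0$. The graph is only defined over $B(0,1)$, so the comparison must be localized. The time restriction $t\le\frac{1}{1+2n}$ is what guarantees the sphere survives for the whole interval while its projection stays inside $B(0,1)$. Hence the graph cannot cross it, and symmetrically from below.

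\textbf{Step 2: evolution equation.} Recall the standard identity
\begin{equation*}
(\partial_t-\Delta)\log v=-|A|^2-|\nabla\log v|^2 ,
\end{equation*}
where $\Delta,\nabla$ are taken on $M_t$. Also recall $|\nabla u|^2=1-v^{-2}$ and $(\partial_t-\Delta)u=0$, $(\partial_t-\Delta)|x|^2=-2n+2|\nabla u|^2/v^{2}\cdot 0+\dots$, where the precise lower-order term only needs $|(\partial_t-\Delta)|x|^2|\le 2n$.

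\textbf{Step 3: test function and maximum principle.} Consider
\begin{equation*}
Z=t\,\eta\,\log v,\qquad \eta=\Bigl(1-|x|^2-2nt+\tfrac{K}{M}(u-\sup u)\Bigr)_+^{2}
\end{equation*}
(or an exponential variant $\eta=\exp(K u/M)\,\phi(x,t)$), with $K=K(n)$ to be chosen. The term $-2nt$ is there so that $(\partial_t-\Delta)$ applied to the inner bracket has a good sign. The $u$-term makes $\eta$ vanish near the parabolic boundary once Step 1 is available. At an interior positive maximum of $Z$, use $\nabla Z=0$ to replace $\nabla\log v$ by $-\nabla\eta/\eta-\dots$. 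The good term $-|\nabla\log v|^2$ then absorbs the cross term $2\langle\nabla\eta,\nabla\log v\rangle/\eta$.

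The crucial Korevaar point is this: where $v$ is large, $|\nabla u|\approx1$, so the $u$-part of the cutoff contributes a strictly favorable term of size $\frac{K}{M}|\nabla u|^2\log v$. This term dominates all errors of size $O(1)$ coming from $|\nabla|x|^2|^2$, the time derivative of $\eta$, and the factor $t$. The result should be $Z\le C(1+M/K)\cdot\ldots$ at the maximum, i.e. $\log v\le C(n)M^2$ at the point $(0,\frac{1}{4n(1+2n)})$, where both $t$ and $\eta$ are bounded below by dimensional constants. This is the asserted bound $|Du|\le\exp(C(1+\|u_0\|_\infty/\mu)^2)$; the quadratic exponent arises because the absorption forces $K\sim M$ inside $\eta$.

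\textbf{Main obstacle.} The main difficulty is the maximum-principle computation in Step 3. The cross terms involving $\nabla u\cdot\nabla\log v$ and $\nabla|x|^2$ must be absorbed without any initial gradient bound. This is why the factor $t$ is needed, and why the exponent ends up quadratic in $M$. A secondary difficulty is making the height bound of Step 1 rigorous on a ball rather than on all of $\mathbf{R}^n$.
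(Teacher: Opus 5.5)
The paper gives no proof of this lemma; it is quoted from Colding--Minicozzi, so your sketch has to stand on its own. Your strategy (a sphere barrier for the height, then a maximum principle for $t\,\eta\log v$) can be made to work, but there is a genuine gap: Step 1 is wrong as written, and the mechanism you describe for Step 3 would not close.

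\textbf{Step 1.} Take a sphere centered over a point with $|x_0|=3/4$ whose shadow stays in $B(0,1)$. Its radius is at most $1/4$, so it dies at $t=1/(32n)<1/(1+2n)$. Spheres therefore give no height bound on $B(0,3/4)\times[0,\frac{1}{1+2n}]$, and that bound is in fact false for $n\ge 3$. Take $u(\cdot,0)\equiv 0$ on $B(0,1)$, continued outside by a very steep cone. The nearly vertical wall moves inward like a shrinking cylinder, $\rho^2\approx 1-2(n-1)t$. It passes $|x|=3/4$ before $t=\frac{1}{1+2n}$, so $u$ there becomes as large as you like.

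What the single sphere of radius $1$ centered at $(0,\pm h)$, with $h=\|u(\cdot,0)\|_\infty+2$, does give is $|u|\le h$ on $\{|x|^2+2nt\le 1\}$. This region contains $B_r\times[0,r^2]$ with $r^2=\frac{1}{1+2n}$, which matches the constants $\frac{1}{1+2n}$ and $\frac{1}{4n(1+2n)}=\frac{r^2}{4n}$ in the statement. Your cutoff must live in this region, because $\sup u$ over $B(0,1)$ is not controlled. Use $\phi=1-|x|^2-2nt+\kappa(u-h-1)$ and set $\eta=\phi_+^2$ on $\{|x|^2+2nt<1\}$, $\eta=0$ elsewhere; this vanishes near the lateral boundary.

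\textbf{Step 3} is where the proof lives. A favorable term $\frac{K}{M}|\nabla u|^2\log v$ is linear in $w=\log v$, exactly like the bad term, so it cannot bound $w$. Taking $K\sim M$, i.e.\ an order-one coefficient in front of $u$, would make $\eta(0,t_0)$ vanish, since $u-\sup u$ can be about $-2M$. Nor do $|x|^2$ or $\partial_t\eta$ produce errors. The correct identity is $(\partial_t-\Delta)|x|^2=-2(n-1+v^{-2})$, so $(\partial_t-\Delta)\phi=-2(1-v^{-2})\le 0$.

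At a positive interior maximum of $Z=t\phi^2 w$ one has $\nabla w=-2w\nabla\phi/\phi$. Together with $(\partial_t-\Delta)w=-|A|^2-|\nabla w|^2$ this gives
\[
0\le \phi^2 w-4t\phi w(1-v^{-2})-t\phi^2|A|^2-t\,(4w^2-6w)\,|\nabla\phi|^2 .
\]
The only bad term comes from $\partial_t$ of the factor $t$, and the good term is quadratic in $w$.

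Since $|\nabla u|^2=1-v^{-2}$ and $|\langle\nabla|x|^2,\nabla u\rangle|\le 2/v$, you get $|\nabla\phi|^2\ge\kappa^2/2$ once $v\ge\max(2,16/\kappa)$. Hence $Z\le C(1+\kappa^{-2})$. Taking $\kappa=1/(8(h+1))$ keeps $\phi(0,t_0)\ge 1/2$ at $t_0=\frac{1}{4n(1+2n)}$. This yields $\log v(0,t_0)\le C(n)(1+\|u(\cdot,0)\|_\infty)^2$, which is the lemma after rescaling.

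So the square in the exponent does not come from $K\sim M$. It comes from the coefficient of $u$ in the cutoff being forced to be at most of order $1/M$, and that coefficient entering the good term squared.
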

\begin{lemma}[Gradient estimate]\label{gradient estimate}
  Let $u \in C^{\infty}(\Omega^n_b \times (0, \infty)) \cap C^0(\Omega^n_b \times [0, \infty))$ be the solution to the initial value problem \eqref{Grahical mcf IVP}. Then, for each $0 < T_1 < T_2$, $r > 0$, $\lambda \in (0,\frac{1}{3}\min(\frac{\pi}{2},b))$, we have the $C^1$ estimate   
  \begin{equation}\label{gradient estimate final}
    \|Du\|_{C^0(K_{r, 3\lambda} \times [T_1, T_2])} \leq \exp{(C(n)(1 + \frac{\|u\|_{C^0(K_{r + \lambda, 2\lambda}\times [0, T_2])}}{\min(\lambda, \sqrt{T_1})})^2}). 
  \end{equation}
  In particular, by combining with lemma \ref{C^0 estimate}, we have
   \begin{equation}\label{gradient estimate only initial data}
    \|Du\|_{C^0(K_{r, 3\lambda} \times [T_1, T_2])} \leq \exp{(C(n)(1 + \frac{\|u_0\|_{C^0(K_{r + \lambda + R, \lambda})} + Q}{\min(\lambda, \sqrt{T_1})})^2}),
  \end{equation}
  where $R = R(n,\lambda, T_2)$, and $Q = Q(n, \lambda, T_2)$ are given by \eqref{radius equation} and \eqref{extra c0 quantity estimate} respectively.
\end{lemma}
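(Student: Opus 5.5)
The plan is to apply Lemma \ref{gradient estimate of colding minicozzi} at each point $(x_0,t_0)\in K_{r,3\lambda}\times[T_1,T_2]$, using a small ball centered at $x_0$ and a time window ending at $t_0$. Choose the radius
\begin{equation*}
\mu = \min(\lambda, \sqrt{T_1}).
\end{equation*}
If $x_0\in K_{r,3\lambda}$, then $|x_0'|<r$ and $|x_{0,n}|<b-3\lambda$. Hence $B(x_0,\mu)\subset B_{\mathbf{R}^{n-1}}(0,r+\lambda)\times(-b+2\lambda,b-2\lambda)=K_{r+\lambda,2\lambda}$, and $u(\cdot,t)$ is defined and smooth there for $t>0$. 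Set
\begin{equation*}
s_0 = t_0 - \frac{\mu^2}{4n(1+2n)}.
\end{equation*}
Since $\mu^2\le T_1\le t_0$ and $4n(1+2n)\ge 1$, we have $s_0\ge 0$. Consider the shifted function $w(y,s)=u(x_0+y,s_0+s)$ for $y\in B(0,\mu)$ and $s\in[0,\mu^2/(1+2n)]$. We need $s_0+\mu^2/(1+2n)\le T_2$. This may fail when $t_0$ is close to $T_2$. It does not matter, since the solution exists for all time and we only need a $C^0$ bound on a slightly longer time interval.

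Cleanly, I would use the fact that $u$ exists on $(0,\infty)$ and bound $\|w(\cdot,0)\|_\infty\le \|u\|_{C^0(K_{r+\lambda,2\lambda}\times[0,T_2])}$, which holds because $s_0\le T_2$. The lemma only uses the sup norm of the initial slice $w(\cdot,0)$, and the existence of the flow on the window $[s_0,s_0+\mu^2/(1+2n)]$ is automatic. The lemma then yields
\begin{equation*}
|Du|(x_0,t_0)=|Dw|\Big(0,\tfrac{\mu^2}{4n(1+2n)}\Big)\le \exp\Big(C(n)\big(1+\tfrac{\|u\|_{C^0(K_{r+\lambda,2\lambda}\times[0,T_2])}}{\mu}\big)^2\Big).
\end{equation*}
Taking the supremum over $(x_0,t_0)$ gives \eqref{gradient estimate final}. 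If $s_0=0$ exactly, continuity of $u$ up to $t=0$ suffices for the sup bound. If needed, one applies the lemma on $[\epsilon,\cdot]$ and lets $\epsilon\to0$.

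For \eqref{gradient estimate only initial data}, apply Lemma \ref{C^0 estimate} with $r$ replaced by $r+\lambda$ and $T=T_2$. It applies because $\lambda<\frac13\min(\frac{\pi}{2},b)<\frac12\min(\frac{\pi}{2},b)$, and it gives
\begin{equation*}
\|u\|_{C^0(K_{r+\lambda,2\lambda}\times[0,T_2])}\le \|u_0\|_{C^0(K_{r+\lambda+R,\lambda})}+Q,
\end{equation*}
with $R=R(n,\lambda,T_2)$ and $Q=Q(n,\lambda,T_2)$. Substituting this bound, and using that the right-hand side is monotone in the sup norm, gives the claim. The main point to check is the domain bookkeeping: the ball must stay inside $K_{r+\lambda,2\lambda}$, and the rescaled time window must start at a nonnegative time. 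Both are handled by the choice $\mu=\min(\lambda,\sqrt{T_1})$.
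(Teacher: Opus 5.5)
Your proposal is correct and follows the paper's argument. You apply the Colding--Minicozzi estimate on $B(x_0,\mu)\subset K_{r+\lambda,2\lambda}$ over a window starting at $t_0-\mu^2/(4n(1+2n))\ge 0$, bound the initial sup norm by $\|u\|_{C^0(K_{r+\lambda,2\lambda}\times[0,T_2])}$, and then invoke lemma \ref{C^0 estimate} with $r$ replaced by $r+\lambda$ and $T=T_2$. The only difference is that you take $\mu=\min(\lambda,\sqrt{T_1})$ rather than the paper's $\min(\lambda,\sqrt{4n(1+2n)T_1})$; this makes the starting time at least $\tfrac{11}{12}T_1>0$, so your caveat about $s_0=0$ is never needed.
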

\begin{proof}[Proof of lemma \ref{gradient estimate}]
Define 
\begin{equation}\label{choice of mu}
        \mu = \min(\lambda, \sqrt{4n(1 + 2n)T_1}) > 0.
    \end{equation}
    For each $x_0 \in K_{r, 3\lambda}$,
    one has
    \begin{equation}
         B_{\mathbf{R}^n}(x_0, \mu) \subset K_{r + \lambda, 2\lambda}.
    \end{equation} Also, for each $T_1 \leq t_0 \leq T_2$, we can find $\overline{t} \in [0, T_2]$ so that $\overline{t} + \frac{\mu^2}{4n(1 + 2n)} = t_0$.  
   We apply lemma \ref{gradient estimate of colding minicozzi} to $u$ restricted onto $B(x_0, \mu) \times [\overline{t}, \overline{t} + \frac{\mu^2}{1 + 2n}]$. Then we have
    \begin{equation}\label{grad estimate first applied}
        |Du|(x_0, t_0) \leq \exp(C(1 + \frac{\|u(\cdot, \overline{t})\|_{C^0(B(x_0, \mu))}}{\mu})^2).
    \end{equation}
    By the fact that $ B_{\mathbf{R}^n}(x_0, \mu) \subset K_{r + \lambda, 2\lambda}$, and $\overline{t} \in [0, T_2]$, we trivially have
    \begin{equation}
        \|u(\cdot, \overline{t})\|_{C^0(B(x_0, \mu))} \leq \|u\|_{C^0(K_{r + \lambda, 2\lambda \times [0, T_2])}}.
    \end{equation}
    Combining above with \eqref{grad estimate first applied}, using the definition \eqref{choice of mu}, and taking the supremum among all $(x_0, t_0) \in K_{r, 3\lambda} \times [T_1, T_2]$, we obtain
    \begin{equation}
        \|Du\|_{C^0(K_{r, 3\lambda} \times [T_1, T_2])} \leq \exp{(C(n)(1 + \frac{\|u\|_{C^0(K_{r + \lambda, 2\lambda}\times [0, T_2])}}{\min(\lambda, \sqrt{T_1})})^2}).
    \end{equation}
We can estimate the $C^0$ norm on the right hand side above by lemma \ref{C^0 estimate}. This gives us the final estimate.
\end{proof}
Once we have an interior gradient estimate, we can apply classical interior curvature estimates of Ecker and Huisken in \cite{eckerhuisken} to obtain higher order derivative estimates to graphical solution $u$ by restricting it to small standard parabolic cylinders as we did in the proof of lemma \ref{gradient estimate}. This implies the following result. 
\begin{lemma}\label{higher order derivative estimate}
    Let $u \in C^{\infty}(\Omega^n_b \times (0, \infty)) \cap C^0(\Omega^n_b \times [0, \infty))$ be the solution to the initial value problem \eqref{Grahical mcf IVP}. Then, for each $0 < T_1 < T_2$, $r > 0$, $\lambda \in (0,\frac{1}{4}\min(\frac{\pi}{2},b))$, $k \geq 0$, we can find constant $C_k = C_k(n, \lambda, T_1, T_2, \|u\|_{C^0(K_{r+ 2\lambda + R(n, \lambda, T_2), \lambda})}) > 0$ so that
    \begin{equation}
        \|D^ku\|_{C^0(K_{r, 4\lambda} \times [T_1, T_2])} \leq C_k.
    \end{equation}
\end{lemma}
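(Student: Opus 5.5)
The estimate will follow by localizing to small parabolic cylinders, exactly as in the proof of lemma \ref{gradient estimate}, and then invoking the interior estimates of Ecker--Huisken \cite{Ecker1991InteriorEF}, \cite{eckerhuisken}. First I fix $\lambda \in (0,\frac14\min(\frac{\pi}{2},b))$ and $0<T_1<T_2$. I choose a radius $\mu = \min(\lambda, \sqrt{c_n T_1})$ with a dimensional constant $c_n$. This guarantees two things for every $x_0 \in K_{r,4\lambda}$ and $t_0 \in [T_1,T_2]$. The ball $B(x_0,\mu)$ lies in $K_{r+\lambda,3\lambda}$. And the cylinder $B(x_0,\mu)\times[t_0-\mu^2/c_n', t_0]$ lies in $K_{r+\lambda,3\lambda}\times[T_1/2,T_2]$ for a suitable $c_n'$.

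Next I record uniform bounds on this larger region. Lemma \ref{gradient estimate}, applied with $T_1/2$ in place of $T_1$ and with $r+\lambda$ in place of $r$, bounds $|Du|$ on $K_{r+\lambda,3\lambda}\times[T_1/2,T_2]$. The bound is in terms of $\|u\|_{C^0(K_{r+2\lambda,2\lambda}\times[0,T_2])}$. Lemma \ref{C^0 estimate} in turn bounds that quantity by $\|u_0\|_{C^0(K_{r+2\lambda+R,\lambda})}+Q$. This is why the constant depends on $\|u\|_{C^0(K_{r+2\lambda+R(n,\lambda,T_2),\lambda})}$, whose value at $t=0$ is $u_0$. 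Hence $v=\sqrt{1+|Du|^2}$ is bounded on this region by a constant depending only on $n,\lambda,T_1,T_2$ and that $C^0$ norm.

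With $v$ bounded on the cylinder, I apply the Ecker--Huisken interior curvature estimate \cite{Ecker1991InteriorEF}. It bounds $|A|^2$ at the center point $(x_0,t_0)$ by $C(n)\sup v^4$ divided by the square of the cylinder scale. Their higher-order interior estimates, obtained from the evolution equations for $|\nabla^m A|^2$ with cutoff functions, bound $|\nabla^m A|$ for all $m$ in the same way. Because $v$ is bounded, the graph is uniformly Lipschitz there. Bounds on $|\nabla^m A|$ then translate into bounds on $D^{m+2}u$ by induction, using $A_{ij}=D^2_{ij}u/v$ together with the relation between the induced metric and $\delta_{ij}+D_iuD_ju$. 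For $k=0$ the estimate comes from the $C^0$ bound, and for $k=1$ from the gradient bound just established. Taking the supremum over $(x_0,t_0)$ gives $C_k$.

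An alternative to the geometric estimates is to note that the equation is uniformly parabolic with bounded coefficients on the cylinder, since $v$ is bounded. Then Krylov--Safonov (or De Giorgi--Nash applied to the derivatives $D_iu$) gives a $C^{1,\alpha}$ bound, and Schauder bootstrapping gives all higher derivatives. The main obstacle is bookkeeping rather than analysis: the nested domains $K_{r,4\lambda}\subset K_{r+\lambda,3\lambda}\subset K_{r+2\lambda,2\lambda}$ and the time shift away from $t=0$ (the data is only $C^0$) must be chosen so that each estimate is applied on a region where its hypotheses have already been verified.
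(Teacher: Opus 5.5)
Your proposal is correct and is essentially the paper's argument: restrict to small parabolic cylinders as in Lemma \ref{gradient estimate} and apply the Ecker--Huisken interior curvature and higher-derivative estimates, and your nested domains $K_{r,4\lambda}\subset K_{r+\lambda,3\lambda}\subset K_{r+2\lambda,2\lambda}$ with the shift to $T_1/2$ reproduce exactly the stated dependence on the $C^0$ norm over $K_{r+2\lambda+R(n,\lambda,T_2),\lambda}$. Only your alternative route is loosely stated: for $n\ge 2$ the equation for $D_ku$ is not in divergence form and has a first-order coefficient involving the not-yet-controlled $D^2u$, so its $C^{1,\alpha}$ step needs the classical Ladyzhenskaya--Ural'tseva (or Lieberman) interior H\"older gradient estimate for quasilinear equations, rather than a direct application of De Giorgi--Nash or Krylov--Safonov to $D_ku$.
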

Now that we have all the key estimates, we can prove theorem \ref{Graphical solution existence}. 
\begin{proof}[Proof of theorem \ref{Graphical solution existence}]
We define a sequence of smooth cutoff functions $\{\phi_j\}_{j \in \mathbf{N}} \subset C^{\infty}_c(\mathbf{R}^n)$ which is identically equal to 1 in $K_{j, \frac{1}{j}}$, and vanishes outside $K_{j+1, \frac{1}{j+1}}$, where $K_{r, \lambda}$ are the domains given by \eqref{definition of domain}. We also choose some $\eta \in C^{\infty}_c(B(0, \frac{1}{10}))$ with $\eta > 0$, and $\int_{B(0, \frac{1}{10})}\eta dx = 1$, and construct standard mollifier
\begin{equation}
    \eta_j(x) = \frac{1}{j^n}\eta(jx).
\end{equation}
We now define sequence of approximate initial data by
\begin{equation}\label{approximate initial data}
    u^j_0(x) = \eta_j * (u_0\phi_j) \in C_c^{\infty}(\mathbf{R}^n).
\end{equation}
Then, by classical results of Ecker and Huisken in \cite{eckerhuisken}, there exist sequence of functions $u^j \in C^{\infty}(\mathbf{R}^n \times [0, \infty))$ which solves the initial value problem
\begin{equation}
    \begin{cases}
        \frac{\partial u^j}{\partial t} = \sqrt{1 + |Du^j|^2}\text{div}(\frac{Du^j}{\sqrt{1 + |Du^j|^2}}) \text{ in }\mathbf{R}^n \times [0, \infty) \\
        u^j(x, 0) = u^j_0(x) \text{ in }\mathbf{R}^n.
    \end{cases}
\end{equation}
We then restrict $u^j$ onto $\Omega^n_b \times [0, \infty)$. By the definition of the initial data given by \eqref{approximate initial data}, for each $r>0$ and $\lambda \in (0, \frac{1}{100}\min(\frac{\pi}{2}, b))$, we have
\begin{equation}\label{key c0 property for appx entire solution}
    \|u^j_0\|_{C^0(K_{r, \lambda})} \leq \|u_0\|_{C^0(K_{r, \lambda})}
\end{equation}
for all sufficiently large $j$ depending on $r, \lambda$. Then by applying lemma \ref{higher order derivative estimate}, we see that for each $r > 0$, $\lambda \in (0, \frac{1}{100}\min(\frac{\pi}{2}, b))$, $0 < T_1 < T_2$, we can find a constant $C$ which is independent of $j$ so that 
\begin{equation}\label{uniform estimate}
    \|D^ku^j\|_{C^0(K_{r, \lambda} \times [T_1, T_2])} \leq C < \infty.
\end{equation}
for all sufficiently large $j$. Therefore, by applying standard Arzela Ascoli theorem and diagonal argument, we can find $u \in C^{\infty}(\Omega^n_b \times (0, \infty))$ so that
\begin{equation}
    u^j \to u \text{ in }C^{\infty}_{loc}(\Omega^n_{b}\times (0, \infty)).
\end{equation}
Since each $u^j$ solves the graphical mean curvature flow, $u$ also solves the graphical mean curvature flow.\\

We now show that $u$ is continuous up to $t = 0$, in other words 
\begin{equation}
    \lim_{t \to 0}u(x,t) = u_0(x) \text{ for each }x \in \Omega_b.
\end{equation}
This can be established by comparison argument with spheres. By continuity of $u_0$, for each $x_0 \in \Omega^n_b$, $\epsilon > 0$, one can choose $0 < \delta < \epsilon$ so that 
\begin{equation}
    u_0(x_0) - \epsilon < u_0(y) < u_0(x_0) + \epsilon
\end{equation}
for all $|x_0 - y| < \delta$. 
Moreover, since $u^j_0 \to u_0$ locally uniformly in $C^0$ norm, we can further assume that for all $j$ large, 
\begin{equation}
     u_0(x_0) - 2\epsilon < u_0^j(y) < u_0(x_0) + 2\epsilon
\end{equation}
for all $|x_0 - y| < \delta$. Then, we can find two spheres $S^+(x_0, \epsilon, \delta)$, and $S^-(x_0, \epsilon, \delta)$ so that both spheres have radius $\delta$, and the centers of the spheres are $x_0 + (u_0(x_0) + 2\epsilon + \delta)e_{n+1}$, and $x_0 + (u_0(x_0) - 2\epsilon - \delta)e_{n+1}$ respectively. Then, we see that $S^+(x_0, \epsilon, \delta)$, $S^-(x_0, \epsilon, \delta)$, and the graph of $u^j_0$ are pairwise disjoint. Then, by avoidance principle, the evolution of $S^+(x_0, \epsilon, \delta)$, and $S^-(x_0, \epsilon, \delta)$ by mean curvature flow acts as a barrier for the graph of $u^j$. Combining this with the fact that $\delta < \epsilon$, we see that for all $0 \leq t \leq \frac{\delta^2}{4n}$, 
\begin{equation}
    u_0(x_0) - 3\epsilon < u^j(x_0, t) < u_0(x_0) + 3\epsilon
\end{equation}
for all large $j$. Thus, by letting $j \to + \infty$, we can conclude that for each $\epsilon > 0$, $x_0\in \Omega^n_b$, we can find $\delta > 0$ so that for all $0 \leq t \leq \frac{\delta^2}{4n}$, we have
\begin{equation}
    |u(x_0, t) - u_0(x_0)| < 3\epsilon,
\end{equation}
which means 
\begin{equation}
    \lim_{t \to 0}u(x, t) = u_0(x) \text{ for all }x \in \Omega_b.
\end{equation}
This completes the proof of the first half of theorem \ref{Graphical solution existence}.\\

We now prove the second half of theorem \ref{Graphical solution existence}. We assume that 
\begin{equation}\label{uniformly blowing up near boundary}
    \forall J > 0, \exists\delta > 0 \ \text{so that }|u_0(x_1,..,x_n)| > J \ \forall x \in \Omega^n_b \text{ which }b - |x_n| < \delta.
\end{equation}
For simplicity, we assume that $u_0(x) \to \infty$ as $x \to \partial \Omega^n_b$. Other possible cases can be dealt with by essentially the same argument. We first note that the arguments in the proof of first half of theorem \ref{Graphical solution existence} hold for any approximate sequence of smooth (up to $t = 0$) entire solution as long as we have \eqref{key c0 property for appx entire solution} for large $j$. Let 
\begin{equation}
    \overline{M} = \text{Graph}(\overline{u})
\end{equation}
with
\begin{equation}
    \overline{u} : \mathbf{R}^{n-1}\times (-\frac{\pi}{2}, \frac{\pi}{2}) \to \mathbf{R}, \ \ \overline{u}(x_1,..,x_n) = \ln \cos x_{n}.
\end{equation}
Note that $\overline{M}_t = \overline{M} - te_{n+1}$ moves by mean curvature flow. We define the approximate initial data as follows. As in the first part of the proof, we choose some $\eta \in C^{\infty}_c(B(0, \frac{1}{10}))$ with $\eta > 0$, and $\int_{B(0, \frac{1}{10})}\eta dx = 1$, and construct standard mollifier
\begin{equation}
    \eta_j(x) = \frac{1}{j^n}\eta(jx).
\end{equation}
Now, for each $j \geq 1$, we define
\begin{equation}
    u^j(x) = \min \{u_0(x), j\} * \eta_j(x), \ x\in \mathbf{R}^n.
\end{equation}
Note that by assumption \eqref{uniformly blowing up near boundary}, $u^j \in C^{\infty}(\mathbf{R}^n)$ is well defined. Moreover, due to \eqref{uniformly blowing up near boundary}, we see that the approximate initial data $\{u^j\}_{j \geq 1}$ indeed have uniform estimate \eqref{key c0 property for appx entire solution}. Thus, by previous argument, we get a longtime graphical solution to mean curvature flow starting from $u_0$ by taking the limit of $u^j$ as $j \to \infty$. We will show that the limit flow obtained by this choice of approximate initial data has the desired behavior near the boundary. Fix any $J,T > 0$. We can find $0 < \delta = \delta_{J,T} <1$ so that for all $j$ sufficiently large, 
\begin{equation}
    u^j(x_1,..,x_n) > 2J + T \text{ for all }|x_n| > b - \delta.
\end{equation}
We will show that $u^j(\cdot, t)$ has the desired behavior near the boundary for all $t \in [0, T]$, $j$ sufficiently large. We consider the following translates of $\overline{M}$, namely
\begin{equation}
    M_0 = \overline{M} + (b + \frac{\pi}{2} - \delta)e_{n} + (-\ln\cos(\frac{\pi}{2} - \delta) + 2J + T)e_{n+1},
\end{equation}
and
\begin{equation}
    M_1 = \overline{M} - (b + \frac{\pi}{2} - \delta)e_{n} + (-\ln\cos(\frac{\pi}{2} - \delta) +  2J + T)e_{n+1},
\end{equation}
Then, one can see that for all sufficiently large $j$, the graph of $u^j$ lies above $M_0$, and $M_1$. Therefore, by avoidance principle, we see that $t$-time slice of the entire graph solution starting from $u^j$ remains above $M_0 - te_{n+1}$, and $M_1 - te_{n+1}$. Then by the choice of $M_0$, $M_1$, we have
\begin{equation}\label{lower bound near boundary}
    u^j(x,t) \geq \ln \cos (b+ \frac{\pi}{2} - \delta -|x_n|)-\ln\cos(\frac{\pi}{2} - \delta) + 2J + T - t.
\end{equation}
for all $ b - |x_n| < \delta$, $t \in [0, T]$. Note that when $|x_n| = b$, the right hand side in \eqref{lower bound near boundary} is equal to $2J + T - t \geq 2J$. Therefore, we can find $\Tilde{\delta} > 0$ so that the right hand side in inequality \eqref{lower bound near boundary} is bounded from below by $J$ whenever $b - |x_n| < \Tilde{\delta}$. Thus, we have
\begin{equation}
    u^j(x_1,..,x_n,t)\geq J \text{ for all }b - |x_n| < \Tilde{\delta}, t \in [0,T].
\end{equation}
Note that $\Tilde{\delta}$ only depends on $u_0$, $J$ and $T$, hence this property will pass through the limit. Thus, each time slice of the graphical solution to mean curvature flow starting from $u_0$ also has property \eqref{uniformly blowing up near boundary}, thus completing the proof of the second part of theorem \ref{Graphical solution existence}.
\end{proof}
\section{\centering{Dynamical stability of grim reaper}}\label{dynamical stability of grim reaper section}
In this section, let $\overline{M} = \text{Graph}(\overline{u})$ be the grim reaper, i.e
\begin{equation}
    \overline{u} : (-\frac{\pi}{2}, \frac{\pi}{2}) \ni x \to -\ln(\cos x).
\end{equation}
We generalize theorem 1.4 in \cite{WANG_WO_2011} by weakening the initial $C^{2+\alpha}$ closeness (condition 1.10 in \cite{WANG_WO_2011}), and dropping the finitely many inflection points condition (condition 1.11 in \cite{WANG_WO_2011}). We also mention that our approach is different from that in \cite{WANG_WO_2011}, where the authors used the monotonicity of
\begin{equation}
    J(t) = \int_{-\frac{\pi}{2}}^{\frac{\pi}{2}}(u_x - \overline{u}_x)^2dx. 
\end{equation}
Here, we follow the general strategy \ref{generalstrategy}, and show that the forward limit has to be a translating solution by using Hamilton's Harnack inequality \eqref{ancient harnack inequality}, and barrier arguments. \\

We restate theorem \ref{dynamical stability of grim reaper intro} for the reader's convenience. 
\begin{theorem}[Theorem \ref{dynamical stability of grim reaper intro}]\label{dynamical stability of grim reaper}
Let $\overline{M} = \text{Graph}(\overline{u})$, with \begin{equation}
    \overline{u} : (-\frac{\pi}{2}, \frac{\pi}{2}) \ni x \to -\ln\cos x.
\end{equation}
Let $u_0 \in C^{2+ \alpha}(-\frac{\pi}{2}, \frac{\pi}{2})$ so that 
\begin{equation}\label{c0 closeness 1d}
    \sup_{x\in (-\frac{\pi}{2}, \frac{\pi}{2})}|u_0(x) - \overline{u}(x)| \leq C_0 \text{ for some }C_0 > 0,
\end{equation}
and $M_0 = \text{Graph}(u_0)$ has bounded curvature and finite total curvature, i.e
\begin{equation}\label{Finite total curvature}
    \sup_{M_0}|\kappa| + \int_{M_0}|\kappa|ds  = C_1 < \infty.
\end{equation}
Then there exists $u \in C^{\infty}((-\frac{\pi}{2}, \frac{\pi}{2})\times (0, \infty))\cap C^2((-\frac{\pi}{2}, \frac{\pi}{2})\times [0, \infty))$ so that $u$ solves the initial value problem
\begin{equation}
    \begin{cases}
        u_t = \frac{u_{xx}}{1 + u_x^2} \text{ in }(-\frac{\pi}{2},\frac{\pi}{2}) \times (0, \infty) \\ 
        u(x,0) = u_0(x) \text{ in }(-\frac{\pi}{2}, \frac{\pi}{2}).
    \end{cases}
\end{equation}
Furthermore,
\begin{equation}
    \lim_{t \to \infty}u(x,t) - t = \overline{u}(x) + c_0 \text{ in }C^{\infty}_{loc}(-\frac{\pi}{2}, \frac{\pi}{2}),
\end{equation}
where 
\begin{equation}
    c_0 = \frac{1}{\pi}\int_{-\frac{\pi}{2}}^{\frac{\pi}{2}}u_0(x) - \overline{u}(x)dx.
\end{equation}
\end{theorem}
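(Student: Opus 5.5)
Existence comes from Theorem \ref{Graphical solution existence} with $n=1$, $b=\pi/2$. Since $u_0\to+\infty$ at $\pm\pi/2$ (by \eqref{c0 closeness 1d}), the second half of that theorem gives complete graphical curves. The translating solutions $\overline{u}+t\pm C_0$ are barriers, so the comparison principle (available through the approximation scheme) gives
\[
|u(x,t)-t-\overline{u}(x)|\le C_0 \quad\text{for all } t .
\]
Regularity up to $t=0$ in $C^2$ follows from the $C^{2+\alpha}$ data and local Schauder theory. Set $w(x,t)=u(x,t)-t-\overline{u}(x)$.

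\textbf{Conservation of the mean.} I will show that $\int_{-\pi/2}^{\pi/2} w\,dx$ is constant, which is what pins down $c_0$. We have
\[
\frac{d}{dt}\int (u-\overline{u})\,dx=\int \Big(\frac{u_{xx}}{1+u_x^2}-\frac{\overline{u}_{xx}}{1+\overline{u}_x^2}\Big)dx=\big[\arctan u_x-\arctan \overline{u}_x\big]_{-\pi/2}^{\pi/2}.
\]
The $x$-interval has length $\pi$, and $\frac{d}{dt}\int t\,dx=\pi$. So the derivative of $\int w\,dx$ is zero exactly when $\arctan u_x\to\pm\pi/2$ at the ends, i.e. when the tangent becomes vertical. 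To justify this boundary behaviour (and to differentiate under the integral) I use the bounded-curvature and finite-total-curvature hypotheses. The curvature bound together with $|w|\le C_0$ forces the curve to be asymptotic to vertical lines: a curve of bounded curvature trapped between $\overline{u}\pm C_0$ cannot turn away near the boundary. Finite total curvature makes the turning angle $\int\kappa\,ds$ converge, so the tangent angle has a limit at each end, and that limit must be vertical. I then need to propagate these properties in time. The evolution $\kappa_t=\kappa_{ss}+\kappa^3$ gives $\frac{d}{dt}\int|\kappa|\,ds\le 0$ (the standard computation, valid for complete curves with suitable decay), and the maximum principle controls $\sup|\kappa|$ on $[0,T]$. \textbf{This step is the main obstacle}: it requires the noncompact integration-by-parts and the decay of $\kappa$ at the ends. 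My first attempt will be to carry it out on the approximating solutions with cutoffs and then pass to the limit.

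\textbf{Forward limits are translators.} Take $t_j\to\infty$ and set $w_j(x,t)=w(x,t+t_j)$. The uniform $C^0$ bound, Lemma \ref{gradient estimate} and Lemma \ref{higher order derivative estimate} give $C^\infty_{loc}$ subsequential convergence to an eternal graphical flow $u_\infty$ with $|u_\infty-t-\overline{u}|\le C_0$. To conclude that $u_\infty$ is a translator, I would apply Hamilton's Harnack inequality, Theorem \ref{Hamilton's Harnack inequality}, which needs convexity. So I first show that the number of sign changes of $\kappa$ (or $\int|\kappa|\,ds-\int\kappa\,ds$) decays, so that the limit is convex; the total turning is $\pi$, fixed by the asymptotic verticality. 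For convex eternal curves, the Harnack inequality plus the fact that $H=\langle\nu,e_2\rangle$-type quantities attain their supremum (bounded by barriers and by the turning-angle $\pi$ on a width-$\pi$ strip) gives the equality case of Hamilton's rigidity, so the limit is a translator. By uniqueness of one-dimensional translators \cite{Halldorsson2010SelfsimilarST}, the width $\pi$ and the barriers force velocity $e_2$. Hence $u_\infty=\overline{u}+t+c$.

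\textbf{Identifying $c$.} By dominated convergence, using $|w|\le C_0$ on a bounded interval, the conserved integral gives $\pi c=\int w(\cdot,0)\,dx$, so $c=c_0$ for every subsequence. The limit is therefore independent of the subsequence, and $u(\cdot,t)-t\to\overline{u}+c_0$ in $C^\infty_{loc}$.
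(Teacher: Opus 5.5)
Your overall architecture (barriers, the conserved mean, forward limits, Harnack, uniqueness of the grim reaper, and dominated convergence to identify $c$) matches the paper's. The forward-limit part, however, has two genuine gaps.

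The first gap is convexity of the limit. Knowing that $I(t)=\int_{M_t}|\kappa|\,ds$, or the number of sign changes of $\kappa$, is nonincreasing does not make the limit convex. A monotone quantity can stabilize at a positive value. Moreover, the number of inflection points need not even be finite here, since no such hypothesis is made.

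What is missing is a rigidity argument that uses the limit $I_0=\lim_{t\to\infty}I(t)$:
\begin{itemize}
\item Suppose $\kappa_\infty(p,t_0)=0$ but $\partial_s\kappa_\infty(p,t_0)\neq0$. By smooth convergence, the flows $M^j_t$ have transversal zeros of $\kappa_j$ nearby for $|t-t_0|<\delta$.
\item In $\frac{d}{dt}\int\eta_N(\epsilon^2+\kappa_j^2)^{1/2}ds$, the dissipation term $\int\epsilon^2(\epsilon^2+\kappa_j^2)^{-3/2}|\partial_s\kappa_j|^2\,ds$ over an $\epsilon$-neighbourhood of such a zero is bounded below independently of $\epsilon$ (substitute $s=\epsilon\sigma$).
\item Letting $\epsilon\to0$ and removing the cutoff gives $I(t_j+t_0+\delta)-I(t_j+t_0-\delta)\le-c\delta<0$ for all large $j$. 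This contradicts $I(t)\to I_0$.
\end{itemize}
Hence every zero of $\kappa_\infty$ is degenerate. Angenent's zero-set theorem then rules such zeros out, and the strong maximum principle gives strict convexity.

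The second gap is the translator step. Hamilton's equality case requires $H$ to attain its space-time supremum on the eternal limit. You do not prove this, and it is not automatic: the Harnack inequality makes $H$ nondecreasing in time, so the supremum may only be approached as $t\to+\infty$.

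The paper avoids this by applying the Harnack inequality with $V=Hv\,e_2^{tan}$, which yields $\partial_t^2u^\infty\ge0$. Since $|u^\infty(x,t)-t-\overline{u}(x)|\le C_0$ for all $t\in\mathbf{R}$, a value $\partial_tu^\infty(x_0,t_0)=1+\delta$ forces $\partial_tu^\infty(x_0,\cdot)\ge1+\delta$ on $[t_0,\infty)$. Likewise, a value $1-\delta$ forces $\partial_tu^\infty(x_0,\cdot)\le1-\delta$ on $(-\infty,t_0]$. Integrating in either direction contradicts the barriers, so $\partial_tu^\infty\equiv1$.

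Relatedly, both the Harnack inequality on the limit and the monotonicity of $I$ need curvature bounds on every $[0,T]$. Your maximum-principle plan only provides them up to time about $1/(2\sup_{M_0}\kappa^2)$, since comparison with $\dot k=k^3$ blows up. The paper instead uses pseudolocality to keep the ends uniformly $C^1$-close to the lines $x=\pm\frac{\pi}{2}$ on $[0,T]$, and then applies interior estimates.
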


\begin{proof}[Proof of theorem \ref{dynamical stability of grim reaper}]
    By theorem \ref{Graphical solution existence}, we can find $u \in C^{\infty}((-\frac{\pi}{2}, \frac{\pi}{2})\times (0, \infty))\cap C^0((-\frac{\pi}{2}, \frac{\pi}{2})\times [0, \infty))$ so that $u$ solves the initial value problem
\begin{equation}
    \begin{cases}
        u_t = \frac{u_{xx}}{1 + u_x^2} \text{ in }(-\frac{\pi}{2},\frac{\pi}{2}) \times (0, \infty) \\ 
        u(x,0) = u_0(x) \text{ in }(-\frac{\pi}{2}, \frac{\pi}{2}),
    \end{cases}
\end{equation}
and since $u_0 \geq \overline{u}(x) - C_0$, we also have
\begin{equation}\label{nice behavior at infinity 1d}
    \lim_{|x| \to \frac{\pi}{2}}u(x, t) = \infty \text{ for all }t\geq 0.
\end{equation}
In fact, since we are assuming $u_0 \in C^{2+ \alpha}$, we can slightly modify the proof of lemma \ref{gradient estimate} and lemma \ref{higher order derivative estimate} and obtain uniform local in space $C^{2+\alpha}$ estimates up to $t = 0$. This will give us $u \in C^2((-\frac{\pi}{2}, \frac{\pi}{2})\times [0, \infty))$. Let 
\begin{equation}
    M_t = \text{Graph}(u(\cdot, t)).
\end{equation}
Then, we see that $\{M_t\}_{t \in [0, \infty)}$ is a complete graphical solution to mean curvature flow with $\text{Graph}(u_0)$ as its initial data.\\

We now consider the total curvature of $M_t$. Define
\begin{equation}\label{def of I}
    I(t) = \int_{M_t}|\kappa|ds = \int_{-\frac{\pi}{2}}^{\frac{\pi}{2}}\frac{|u_{xx}|}{1 + u_x^2}dx,
\end{equation}
where $\kappa$ is the curvature of $M_t$. We claim that $I(t)$ is nonincreasing in $t$. We first note that for each $T > 0$, one can find a constant $C = C(T, C_0, C_1) > 0$ so that
\begin{equation}\label{curvature bound up to any T}
    \sup_{t \in [0, T]}\sup_{M_t}|\kappa| \leq C.
\end{equation}
By assumption \eqref{c0 closeness 1d}, \eqref{Finite total curvature}, and the fact that $M_0$ is graphical solution over $(-\frac{\pi}{2}, \frac{\pi}{2})$, we see that $M_0$ is uniformly $C^1$ close to vertical lines $\{x = \frac{\pi}{2}\} (\{x = -\frac{\pi}{2}))$ as $x \to \frac{\pi}{2} (x \to -\frac{\pi}{2})$. In other words, for any $\epsilon > 0$, we can find $\delta_0 > 0$ so that for any $\frac{\pi}{2} - \delta_0 < |x| < \frac{\pi}{2}$, we can write $M_0$ as a $C^1$ graph over vertical line with height and gradient bounded by $\epsilon > 0$. Then by pseudolocality theorem (see theorem 1.5 in \cite{10.4310/jdg/1547607687}), the same property (uniform $C^1$ closeness to vertical line) hold for all $M_t$, $t \in [0, T]$. Then by interior regularity theorem in \cite{Ecker1991InteriorEF} with assumption \eqref{Finite total curvature}, we indeed have uniform bound on curvature up to $t = T$. Once we have bounded curvature and finite initial total curvature, we can apply lemma 6.1 in \cite{zhulectures} to conclude that 
\begin{equation}
    I(t) \text{ is nonincreasing in }t \text{ in }t \in [0, T].
\end{equation}
Since $T > 0$ was arbitrary, we have the desired monotonicity and finiteness of total curvature. \\

We then consider
\begin{equation}
    I_0 = \lim_{t \to \infty}I(t).
\end{equation}
Note that by monotonicity, this quantity is well defined. Now, for any $t_j \to \infty$, we consider
\begin{equation}
    M^j_t = M_{t + t_j} - t_je_2 \text{ for }t \in [-t_j, \infty).
\end{equation}
By assumption \eqref{c0 closeness 1d} and avoidance principle \cite{White2024TheAP}, if we write $M^j_t = \text{Graph}(u^j(\cdot, t))$, then
\begin{equation}\label{limit flow in between grims}
    \overline{u}(x) +t - C_0 \leq u^j(x,t) \leq \overline{u}(x) +t + C_0.
\end{equation}
Hence, the translating grim reapers acts as barriers, and provide uniform local $C^0$ estimates of $u^j$ independent of $j$. Then by lemma \ref{higher order derivative estimate}, we have uniform local higher order derivative estimates of $u^j$ independent of $j$. Thus, we can extract a subsequential limit $M^{\infty}_t = \text{Graph}(u^{\infty}(\cdot, t))$. \\

The limit flow is a complete, eternal, graphical solution to mean curvature with total curvature bounded by $I_0$, and lies in between two translating grim reapers. We claim that $M^{\infty}_t$ is convex. We follow the argument in proposition 5.4 in \cite{zhulectures}. We claim that whenever $\kappa_{\infty}(p,t) = 0$ at some $p \in M^{\infty}_t$, then $\partial_s\kappa_{\infty}(p,t) = 0$ as well. Suppose this is not the case and there exists $p \in M^{\infty}_{t_0}$ so that \begin{equation}
    \kappa_{\infty}(p, t_0)=0,
\end{equation} yet  
\begin{equation}
    0 < \alpha \leq |\partial_s\kappa_{\infty}(p,t_0)| \leq \beta.
\end{equation}
Then, by smoothness of both the limit flow and the convergence $M^j \to M^{\infty}$, we can find $j_0$, $\delta_0 > 0$ and $p_j(t) \in M^j_t$ so that
\begin{equation}\label{properties of appx nodal pts}
 \kappa_j(p_j(t), t) = 0 , \ 0 < \frac{\alpha}{2} \leq |\partial_s\kappa_j(q, t)| \leq 2\beta  \text{ for all }j \geq j_0, |t - t_0| < \delta, d_{j,t}(q, p_j(t)) \leq \delta,
\end{equation}
where $d_{j,t}$ is the intrinsic distance on $M^j_t$. For each $N > 0$, define a cutoff function
\begin{equation}
    \eta_N \in C_c^{\infty}(\mathbf{R}^2)
\end{equation}
so that 
\begin{equation}
|D\eta_N| + |D^2\eta_N| \leq 3, \ \eta_N = 1 \text{ when }|x| \leq N, \text{ and }\eta_N = 0 \text{ when }|x| \geq N+1.
\end{equation}
Following the computations in lemma 6.1 in \cite{zhulectures}, for all $j \geq j_0$, $|t - t_0| < \delta$, $0 < \epsilon<1 $, and $N$ sufficiently large (but independently of the parameters $j, t, \epsilon$), we have
\begin{align*}
 \frac{d}{ds}\int_{M^j_t}\eta_N(\epsilon^2 + \kappa_j^2)^{1/2}ds \leq &\int_{M^j_t}(2|\kappa_j||D\eta_N| + |D^2\eta_N|)(\epsilon^2 + \kappa_j^2)^{1/2}ds \\ &- \int_{p_j(t) -\epsilon}^{p_j(t) + \epsilon}\epsilon^2(\epsilon^2 + \kappa_j^2)^{-3/2}|\partial_s\kappa_j|^2ds \\ \leq & \int_{M^j_t \cap \{N \leq (x^2 + y^2)^{1/2} \leq N+1 \}}C(\kappa_j^2 +1)ds \\ & - \int_{-\epsilon}^{\epsilon}\epsilon^2(\epsilon^2 + 4\beta^2s^2)^{-3/2}\frac{\alpha^2}{4}ds \\
 \leq & \int_{M^j_t \cap \{N \leq (x^2 + y^2)^{1/2} \leq N+1 \}}C(\kappa_j^2 +1)ds - C(\alpha, \beta).
\end{align*}
Integrating from $t = t_0 -\delta$ to $t = t_0 + \delta$ tells us that 
\begin{align*}
     \int_{M^j_{t_0 + \delta}}\eta_N(\epsilon^2 + \kappa_j^2)^{1/2}ds&  - \int_{M^j_{t_0 - \delta}}\eta_N(\epsilon^2 + \kappa_j^2)^{1/2}ds \\ &\leq -2\delta C(\alpha, \beta) + \int_{t_0 - \delta}^{t_0 + \delta}\int_{M^j_t \cap \{N \leq (x^2 + y^2)^{1/2} \leq N+1 \}}C(\kappa_j^2 +1)ds.
\end{align*}
In view of curvature estimate \eqref{curvature bound up to any T}, total curvature bound, and the definition of $M^j_t$, we can let $\epsilon \to 0$, and let $N \to \infty$. Dominated convergence theorem implies that
\begin{equation}
    I(t_j + t_0 + \delta) - I(t_j + t_0 - \delta) \leq -2\delta C_0(\alpha, \beta) < 0
\end{equation}
for all $j \geq j_0$, where $I$ is given by \eqref{def of I}. This is a contradiction for large $j$, because
\begin{equation}
   \lim_{j \to \infty}I(t_j + t_0 + \delta) - I(t_j + t_0 - \delta) = I_0 - I_0 = 0. 
\end{equation}
This means that all inflection points of $M^{\infty}_t$ are degenerate. By using zero set result in \cite{zeroset} (see also proof of theorem 6.1 in \cite{Haslhofer2016LECTURESOC}), this implies that $M^{\infty}_t$ is convex. It is actually strictly convex, or else by strong maximum principle, the limit flow has to be a straight line which is impossible since it lies between two translating grim reapers. \\

So far, we have seen that the limit flow has to be strictly convex. Also, by once again using finite total curvature and the fact that it lies between two grim reapers, we can obtain uniformly $C^1$ closeness of the limit flow to vertical lines $\{ x = \frac{\pi}{2}\} (\{x = -\frac{\pi}{2}\})$ as $|x| \to \frac{\pi}{2}$. Then local regularity theorem \cite{White2005-nl} implies that the limit flow has local in time uniform bound on the curvature. Thus we can use Hamilton's Harnack inequality \eqref{ancient harnack inequality} on the limit flow. As in proposition 6.1 of \cite{brendlechoi}, we define the vector field
\begin{equation}
    V = Hve_2^{tan},
\end{equation}
where $H, v$ are from definition \ref{def of commonly used quantities}. Applying Harnack inequality with above $V$ implies that
\begin{equation}\label{acceleration of velocity in 1d}
    \partial_t^2u^{\infty} \geq 0.
\end{equation}
We now claim that \begin{equation}
    \partial_tu^{\infty} = 1,
\end{equation}
i.e $M^{\infty}_t$ is a translating solution to mean curvature flow with velocity $e_2$. This will immediately imply that \begin{equation}
    u^{\infty}(x,t) = \overline{u}(x) + t + c_0
\end{equation}
for some $c_0 \in \mathbf{R}$, since the grim reaper is the only one dimensional translator, and our limit flow has to be graphical over $(-\frac{\pi}{2}, \frac{\pi}{2})$. If for some $x_0 \in (-\frac{\pi}{2}, \frac{\pi}{2})$, $t_0 \in \mathbf{R}$, we have
\begin{equation}
    \partial_tu^{\infty}(x_0, t_0) = 1 + \delta > 1.
\end{equation}
Then by \eqref{acceleration of velocity in 1d}, we have for all $t \geq t_0$
\begin{equation}
    \partial_tu^{\infty}(x_0, t) \geq 1 + \delta.
\end{equation}
Then, by integrating in $t$, we have for all $t > s \geq t_0$,
\begin{equation}
    u^{\infty}(x_0, t) - u^{\infty}(x_0, s) \geq (1 + \delta)(t - s).
\end{equation}
On the other hand, by \eqref{limit flow in between grims}, we have
\begin{equation}
    u^{\infty}(x_0, t) - u^{\infty}(x_0, s) \leq (t - s) +2C_0, 
\end{equation}
which is a contradiction for large $t - s$. Thus, $\partial_tu^{\infty} \leq 1$ everywhere. The reverse inequality is obtained similarly. Thus we have
\begin{equation}
    \partial_tu^{\infty} = 1,
\end{equation}
which implies that
\begin{equation}
    \lim_{j \to \infty}u(x, t+ t_j) -t_j = \overline{u}(x) + t + c_0 \text{ in }C^{\infty}_{loc}(-\frac{\pi}{2}, \frac{\pi}{2}),
\end{equation}
for some $c_0 \in \mathbf{R}$. \\

By following \cite{WANG_WO_2011}, we now claim that
\begin{equation}
    c_0 = \frac{1}{\pi}\int_{-\frac{\pi}{2}}^{\frac{\pi}{2}}u_0(x) - \overline{u}(x)dx.
\end{equation}
Note that this claim ensures that the constant $c_0$ is independent of choice of subsequence $t_j \to \infty$, hence we actually have full convergence, rather than subsequential convergence. By simply differentiating 
\begin{equation}
    \phi(t) = \frac{1}{\pi}\int_{-\frac{\pi}{2}}^{\frac{\pi}{2}}u(x,t) - t - \overline{u}(x)dx
\end{equation}
in $t$, we have
\begin{equation}
    \frac{d\phi}{dt}(t) = \frac{1}{\pi}\int_{-\frac{\pi}{2}}^{\frac{\pi}{2}}\frac{u_{xx}}{1 + u_x^2} - 1dx = 0
\end{equation}
where we used the fact that the graph of $u$ is arbitrarily $C^1$ close to the vertical lines near the boundary. Then we see that
\begin{equation}
    c_0 = \lim_{j \to \infty} \phi(t_j) = \phi(0) = \frac{1}{\pi}\int_{-\frac{\pi}{2}}^{\frac{\pi}{2}}u_0(x) - \overline{u}(x)dx,
\end{equation}
thus completing the proof of theorem \ref{dynamical stability of grim reaper}.
\end{proof}
\section{\centering{Dynamical stability of graphical translators in $\mathbf{R}^3$}}\label{dynamical stability of graphical translators in R3 section}
In this section, we focus on evolution of surfaces in $\mathbf{R}^3$. As before, for each $b > 0$, $n\geq 1$, we set $\Omega^n_b = \mathbf{R} \times (-b, b)$, and $\overline{M} = \text{Graph}(\overline{u})$ be a graphical translator over $\Omega^2_b$, and not in any smaller slab. In this section we follow our general strategy \ref{generalstrategy} and show that the evolution of a graph of $u_0 : \Omega_b^2 \to \mathbf{R}$ which is convex, and is close to $\overline{M}$ in $C^0$ sense converges locally smoothly to $\overline{M}$ as $t\to \infty$ modulo space translation.\\

We restate theorem \ref{dynamical stability for 2d translators intro} for the reader's convenience. 
\begin{theorem}[Theorem \ref{dynamical stability for 2d translators intro}]\label{dynamical stability of 2d translators}
Let $\overline{M} = \text{Graph}(\overline{u})$ be a graphical translator defined over $\Omega^2_b = \mathbf{R}\times (-b, b)$, and not contained in $\Omega^2_{b'}\times \mathbf{R}$ for any $b' < b$. Let $u_0 \in C^0(\Omega^2_b)$ be a convex function in the sense that the graph of $u_0$ encloses a convex region in $\mathbf{R}^3$. Suppose there exists $C_0 > 0$ so that 
\begin{equation}\label{C0 closeness 2d}
    \overline{u}(x) - C_0 \leq u_0(x) \leq \overline{u}(x) + C_0 \text{ for all }x \in \Omega^2_b
\end{equation}
Then, there exists $u \in C^{\infty}(\Omega^2_b \times (0, \infty)) \cap C^0(\Omega^2_b\times [0, \infty))$ so that $u$ solves the initial value problem
\begin{equation}
\begin{cases}
    \frac{\partial u}{\partial t} = \sqrt{1 + |Du|^2}\textit{\emph{div}}(\frac{Du}{\sqrt{1 + |Du|^2}}) \text{ in }\Omega^2_b \times (0, \infty) \\
    u(x,0) = u_0(x) \text{ in }\Omega^2_b.
\end{cases}
\end{equation}
Furthermore, for each $t_j \to \infty$, there exists a subsequence $\{\Tilde{t}_j\} \subset \{t_j\}$ so that, 
$$\lim_{j \to + \infty}u(x_1,x_2,t + \Tilde{t}_j) -\Tilde{t}_j  = \overline{u}(x_1 + c_1, x_2) + t+c_0 \text{ in }C^{\infty}_{loc}(\Omega^2_b \times \mathbf{R})$$
for some $c_0, c_1 \in \mathbf{R}$, i.e the flow converges to the translating solution $\overline{M} + te_3$ modulo translation in $x_1x_3$-direction. If $\overline{M}$ is a (tilted) grim reaper plane, then $c_1 = 0$, $c_0 \in [-C_0, C_0]$. If $\overline{M}$ is a $\Delta$-wing (hence $b > \frac{\pi}{2})$, then $|c_1|\tan\theta + |c_0| \leq C_0$ where $\theta = \arccos\frac{\pi}{2b}$. 
\end{theorem}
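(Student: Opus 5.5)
Existence comes first. Since $u_0\ge \overline{u}-C_0$ and $\overline{u}\to\infty$ as $|x_2|\to b$ (for grim reaper planes and $\Delta$-wings over a slab of width $b$, the graph is asymptotic to the vertical planes $x_2=\pm b$), $u_0$ satisfies the blow-up hypothesis of theorem \ref{Graphical solution existence}. That theorem then gives a long-time solution $u$ whose time slices $M_t$ are complete graphs over $\Omega^2_b$. By the avoidance principle, the translating solutions $\overline{M}+te_3\pm C_0e_3$ serve as barriers, so
\[
\overline{u}+t-C_0\le u(\cdot,t)\le \overline{u}+t+C_0 .
\]
I also want convexity of $M_t$ for all $t$. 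I would obtain it by approximation: choose convex approximating initial data (the entire-graph solutions of Ecker--Huisken preserve convexity) and pass to the limit. Alternatively, I would identify $u$ with the unique convex flow of theorem 2.14 in \cite{dynamicsconvex}.

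Next, fix $t_j\to\infty$ and set $u^j(x,t)=u(x,t+t_j)-t_j$. These functions obey the same two-sided bound, so lemma \ref{higher order derivative estimate} gives local $C^k$ bounds independent of $j$. A subsequence then converges in $C^\infty_{loc}(\Omega^2_b\times\mathbf{R})$ to an eternal, convex graphical flow $u^\infty$ with $|u^\infty-\overline{u}-t|\le C_0$. To use Hamilton's Harnack inequality (theorem \ref{Hamilton's Harnack inequality}, ancient form), I need bounded curvature on each time slice of the limit. I would get this from the barriers: near $|x_2|=b$ the limit is squeezed between two vertical translates of $\overline{M}$, which are $C^1$-close to the vertical planes. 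Convexity together with local regularity \cite{White2005-nl} then gives uniform curvature bounds. If the limit fails to be strictly convex, the splitting and strong maximum principle reduce it to a lower-dimensional flow times $\mathbf{R}$, and that case is handled as in theorem \ref{dynamical stability of grim reaper}. With $V=Hv\,e_3^{tan}$ as in \cite{brendlechoi}, the Harnack inequality gives $\partial_t^2u^\infty\ge0$. Exactly as in the proof of theorem \ref{dynamical stability of grim reaper}, the bound $|u^\infty(x,t)-u^\infty(x,s)-(t-s)|\le 2C_0$ then forces $\partial_tu^\infty\equiv1$. So the limit is a graphical translator with velocity $e_3$, defined over $\Omega^2_b$ and lying within $C_0$ of $\overline{M}$ vertically.

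The main step, and the one I expect to be hardest, is identifying this limit translator. I would use the classification of Hoffman et al.\ \cite{Hoffman2018GraphicalTF}: every complete graphical translator in $\mathbf{R}^3$ is a tilted grim reaper plane, a $\Delta$-wing, or the bowl. The limit is defined over $\Omega^2_b$ and is $C^0$-close to $\overline{M}$, so the bowl is excluded. The limit must also blow up at $x_2=\pm b$ and cannot live in a smaller slab, so its width equals $b$. For width $b$ there is a unique tilted grim reaper plane when $b\ge\pi/2$, and a unique $\Delta$-wing (up to $x_1x_3$-translation) when $b>\pi/2$. The two can be distinguished by their behaviour as $x_1\to\pm\infty$: the $\Delta$-wing is asymptotic to grim reaper planes tilted by angles $\pm\theta$, whereas the tilted grim reaper plane is tilted to one side only. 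The vertical $C_0$-closeness preserves these asymptotic slopes, so the limit has the same type as $\overline{M}$. For the tilted grim reaper the family is invariant under $x_1$-translation combined with vertical translation, so we may take $c_1=0$, and then $|c_0|\le C_0$ follows directly. For the $\Delta$-wing, I compare the translate $\overline{u}(x_1+c_1,x_2)+c_0$ with $\overline{u}\pm C_0$ as $x_1\to\pm\infty$, where $\overline{u}$ behaves like $\pm x_1\tan\theta$ plus a bounded function of $x_2$. Sending $x_1\to+\infty$ and $x_1\to-\infty$ gives $|c_0\pm c_1\tan\theta|\le C_0$, which is the stated bound $|c_1|\tan\theta+|c_0|\le C_0$. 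The asymptotics here can be taken from \cite{Hoffman2018GraphicalTF} or from the grim reaper asymptotic structure.
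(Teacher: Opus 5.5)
Your outline is correct, but the central step, producing a solution that is convex and satisfies Hamilton's Harnack inequality, is done differently from the paper.

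The paper does not use theorem \ref{Graphical solution existence} here. In lemma \ref{existence of convex evolution when deltawing} it obtains $u$ as a local limit of \emph{compact} convex flows. These start from the truncated epigraph $D^i_0$ doubled across $\{x_3=i\}$, as in \cite{Daskalopoulos2021UniquenessOE}, and rescaled ancient pancakes placed inside $D^i_t$ keep the lower halves graphical. Convexity and the Harnack inequality (with the $H/2t$ term) then come for free from compactness and pass directly to $u$ and to $u^\infty$. No curvature bound on the limit is ever needed. The price is that $u_0$ must be proper. This is why, in the tilted grim reaper case, the paper first proves $u_0=u_0(0,x_2)+x_1\tan\theta$ (lemma \ref{splitting off of intial data}) and reduces to one dimension.

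Your route through convex entire-graph approximations avoids properness and the splitting lemma. However, the parenthetical claim that Ecker--Huisken solutions preserve convexity needs care, since the paper stresses that convexity preservation for noncompact evolutions is unclear in general. Two things fix it:
\begin{itemize}
\item Take smooth convex approximants with bounded $Du$ and $D^2u$, e.g.\ mollifications of the supremum of the supporting affine functions of $u_0$ at points of $K_{j,1/j}$. Then the Ecker--Huisken solutions have bounded curvature on finite time intervals, and the noncompact maximum principle preserves convexity.
\item Define $u$ as the limit of these flows, not as the solution of theorem \ref{Graphical solution existence}. That solution's approximants $\min\{u_0,j\}*\eta_j$ are not convex, and no uniqueness statement is available to identify the two.
\end{itemize}
Your alternative of identifying $u$ with the flow of \cite{dynamicsconvex} is circular, because that uniqueness holds only among convex flows, which is exactly the paper's caveat.

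Once the approximants are convex with bounded curvature, the Harnack inequality holds on them and passes to the limits. Your separate curvature estimate for $u^\infty$ then becomes unnecessary; for the $\Delta$-wing it would also have needed control uniform as $|x_1|\to\infty$.

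The remaining steps agree with the paper: $\partial_tu^\infty\equiv1$, the classification of \cite{Hoffman2018GraphicalTF}, and $|c_0\pm c_1\tan\theta|\le C_0$. Your comparison of asymptotic slopes makes explicit the paper's bare assertion that one type of translator cannot lie between two copies of another.
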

\begin{remark}\label{independence of constants 2d translators remark}
    It is unclear if the constants $c_0,c_1 \in \mathbf{R}$ in above theorem are independent to the choice of (sub)sequence $\{\Tilde{t}_j\} \subset \{t_j\}$ in general. However, in the case one can find some $\{\Tilde{t}_j\} \subset \{t_j\}$ so that 
    \begin{equation}
        \limsup_{j \to \infty}|\Tilde{t}_{j+1} - \Tilde{t}_j| < \infty
    \end{equation}
    then we can show that the constants are independent of the choice of (sub)sequence, hence we get full convergence, i.e
    \begin{equation}
        \lim_{t \to \infty}u(x_1, x_2, t) - t = \overline{u}(x_1 + c_1, x_2) + c_0 \text{ in }C^{\infty}_{loc}(\Omega^2_b).
    \end{equation}
\end{remark}
\begin{remark}\label{extra x1 translation}
Compared to theorem \ref{dynamical stability of grim reaper}, we have to consider an extra $x_1$-direction translation in the case of $\Delta$-wings. This is due to the following simple lemma. 
\begin{lemma}
    Let $\overline{u} : \Omega_b^2 \to \mathbf{R}$ with $b > \frac{\pi}{2}$ so that 
    \begin{equation}
        \overline{M} = \text{Graph}(\overline{u}) = \Delta \text{-wing}.
    \end{equation}
    Then, for any $c_1 \in \mathbf{R}$, 
    \begin{equation}
        \overline{u}(x_1, x_2) - \tan\theta|c_1| \leq \overline{u}(x_1 + c_1,x_2) \leq \overline{u}(x_1, x_2) + \tan\theta|c_1|
    \end{equation}
    with $\theta = \arccos\frac{\pi}{2b}$.
\end{lemma}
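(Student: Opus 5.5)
The plan is to reduce the lemma to the pointwise gradient bound
\begin{equation*}
    |\partial_{x_1}\overline{u}(x_1,x_2)| \leq \tan\theta \quad \text{for all } (x_1,x_2) \in \Omega^2_b .
\end{equation*}
Once this holds, the fundamental theorem of calculus in the $x_1$ variable gives, for fixed $x_2$,
\begin{equation*}
    |\overline{u}(x_1+c_1,x_2)-\overline{u}(x_1,x_2)| = \Bigl|\int_0^{c_1}\partial_{x_1}\overline{u}(x_1+s,x_2)\,ds\Bigr| \leq \tan\theta\,|c_1| ,
\end{equation*}
which is exactly the two-sided inequality in the lemma.

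To get the gradient bound, I will use two structural facts about $\Delta$-wings from Hoffman et al. \cite{Hoffman2018GraphicalTF}.
\begin{enumerate}
    \item $\overline{u}$ is strictly convex. Hence for each fixed $x_2 \in (-b,b)$, the one-variable function $x_1 \mapsto \partial_{x_1}\overline{u}(x_1,x_2)$ is nondecreasing. It therefore lies between its limits as $x_1 \to -\infty$ and as $x_1 \to +\infty$.
    \item As $x_1 \to \pm\infty$, the translates $\overline{M} \mp s e_1 - \overline{u}(\pm s,0)e_3$ converge smoothly on compact sets, as $s \to \infty$, to tilted grim reaper planes. These limits are defined over the same slab $\Omega^2_b$; this uses the hypothesis that $\overline{M}$ is not contained in a smaller slab. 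A tilted grim reaper over a slab of half-width $b$ has the form
    \begin{equation*}
        x_3 = \pm x_1\tan\theta + \frac{1}{\cos^2\theta}\,\overline{u}_{\mathrm{gr}}(x_2\cos\theta) + \text{const}, \qquad \cos\theta = \frac{\pi}{2b},
    \end{equation*}
    where $\overline{u}_{\mathrm{gr}}(y) = -\ln\cos y$ is the grim reaper. Its slope in the $x_1$-direction is identically $\pm\tan\theta$.
\end{enumerate}

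By the smooth local convergence in fact 2, for each fixed $x_2$ we get
\begin{equation*}
    \lim_{x_1\to\pm\infty}\partial_{x_1}\overline{u}(x_1,x_2) = \pm\tan\theta .
\end{equation*}
Which end carries which sign follows from convexity, since the derivative increases in $x_1$. Combining this with the monotonicity from fact 1 gives $-\tan\theta \leq \partial_{x_1}\overline{u} \leq \tan\theta$. Strict convexity in fact makes the inequality strict, but that is not needed.

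The main obstacle is justifying fact 2 with the correct tilt angle. Specifically, one must show that the asymptotic limits at both ends are grim reaper planes over exactly the width-$b$ slab, rather than over a narrower one. Only then does their tilt satisfy $\cos\theta = \pi/(2b)$.

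My first approach is to cite the asymptotic description in \cite{Hoffman2018GraphicalTF} directly. If a self-contained argument is needed, I would proceed as follows.
\begin{enumerate}
    \item Take subsequential limits of the horizontally translated and renormalized surfaces. They converge by convexity and the translator equation, and each limit is a complete graphical translator that is independent of $x_1$ up to a linear term.
    \item By the classification of \cite{Hoffman2018GraphicalTF}, such a limit is a tilted grim reaper plane over some slab of half-width $b' \leq b$.
    \item Rule out $b' < b$: if it held, convexity would confine $\overline{M}$ to a slab narrower than $\Omega^2_b$, contradicting the hypothesis that $\overline{M}$ is not contained in $\Omega^2_{b'}\times\mathbf{R}$ for any $b'<b$.
\end{enumerate}
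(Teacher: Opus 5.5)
Your proposal follows the paper's proof: the paper likewise cites Theorem 1.5 of \cite{spruck2020complete} (or Theorem 2.4 of \cite{Hoffman2018GraphicalTF}) for $\lim_{|x_1|\to\infty}|\partial_{x_1}\overline{u}(x_1,x_2)|=\tan\theta$, uses convexity to get $|\partial_{x_1}\overline{u}|\le\tan\theta$ everywhere, and integrates in $x_1$. Rely on that citation rather than your self-contained fallback, because its step ruling out $b'<b$ does not follow from convexity alone: a convex function finite on the full slab can have renormalized end-limits living over a narrower slab, so excluding $b'<b$ genuinely needs the translator analysis carried out in those references.
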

\begin{proof}
    By using theorem 1.5 in \cite{spruck2020complete} (or theorem 2.4 in \cite{Hoffman2018GraphicalTF}), we see that for each fixed $|x_2| < b$, 
\begin{equation}
    \lim_{|x_1| \to \infty}|\frac{\partial \overline{u}}{\partial x_1}(x_1, x_2)| = \tan\theta \text{ with }\theta = \arccos\frac{\pi}{2b}.
\end{equation}
Thus convexity of $\overline{u}$ implies that
\begin{equation}
    |\frac{\partial \overline{u}}{\partial x_1}| \leq \tan\theta \text{ everywhere}.
\end{equation}
Then the lemma follows by integration in $x_1$. 
\end{proof}
Above lemma implies that in the statement of theorem \ref{dynamical stability of 2d translators}, we can take
\begin{equation}
    u_0(x_1, x_2) = \overline{u}(x_1 + \frac{C_0}{\tan \theta}, x_2).
\end{equation}
Then since the graph of $u_0$ evolves only by translation in $e_3$ direction, we see that
\begin{equation}
    \lim_{t \to \infty}u(x,t) - t = u_0(x) = \overline{u}(x_1 + \frac{C_0}{\tan\theta}, x_2).
\end{equation}
This simple example implies that we cannot drop the $x_1$-direction translation in the statement of theorem \ref{dynamical stability of 2d translators}.
\end{remark}
We briefly explain the proof of theorem \ref{dynamical stability of 2d translators}. The idea is again to use general strategy \ref{generalstrategy}. To do so, we must ensure that the forward limit flow is convex, and we can apply Hamilton's Harnack inequality. However, because our solutions are noncompact, and the initial surface is not necessarily $C^1$, it is unclear if one indeed has both convexity, and validity of Harnack inequality. To overcome this, we follow the ideas in \cite{Daskalopoulos2021UniquenessOE} and construct the solution as a local smooth limit of convex, compact solutions to mean curvature flow. \\

We first prove two lemmas that will be used to construct the longtime solution to graphical mean curvature flow starting from $u_0$. Let us first prove two lemmas.
\begin{lemma}\label{splitting off of intial data}
 Suppose $\overline{M} = \text{Graph}(\overline{u})$ is a (tilted) grim reaper plane, i.e
\begin{equation}\label{tilted grim reaper}
    u : \Omega^2_b \to \mathbf{R}, \ u(x_1,x_2) = x_1\tan\theta - \frac{1}{\cos^2\theta}\ln(\cos(x_2\cos\theta) ), \ \theta = \arccos\frac{\pi}{2b}.
\end{equation}
Let $u_0$ be as in theorem \ref{dynamical stability of 2d translators}. Then
\begin{equation}\label{splitting off line}
    u_0(x_1,x_2) = u_0(0, x_2) + x_1\tan\theta.
\end{equation}
\end{lemma}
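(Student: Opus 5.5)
Throughout, $u_0$ is convex, defined on $\Omega^2_b$, and satisfies $|u_0 - \overline{u}|\le C_0$, where $\overline{u}(x_1,x_2) = x_1\tan\theta + g(x_2)$ with $g(x_2) = -\frac{1}{\cos^2\theta}\ln\cos(x_2\cos\theta)$. The plan is to show that $f(x_1,x_2) := u_0(x_1,x_2) - x_1\tan\theta$ is independent of $x_1$. The convexity of $u_0$ (its graph bounds a convex region) means $u_0$ is a convex function on $\Omega^2_b$, and subtracting the linear function $x_1\tan\theta$ preserves convexity, so $f$ is convex. Moreover
\begin{equation*}
g(x_2) - C_0 \le f(x_1,x_2) \le g(x_2) + C_0 \quad \text{for all } (x_1,x_2)\in\Omega^2_b .
\end{equation*}

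Now fix $x_2\in(-b,b)$ and set $h(s) = f(s,x_2)$. This is a convex function of one real variable on all of $\mathbf{R}$, and it is bounded above and below by constants, namely $g(x_2)\pm C_0$. A convex function on $\mathbf{R}$ that is bounded above must be constant. Indeed, if $h(s_1)\neq h(s_2)$ for some $s_1<s_2$, then convexity gives linear growth in at least one direction along the secant line, contradicting the upper bound. Hence $h(s)=h(0)$ for all $s$, that is, $u_0(x_1,x_2) - x_1\tan\theta = u_0(0,x_2)$. This is exactly \eqref{splitting off line}.

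The only point requiring care is interpreting "the graph of $u_0$ encloses a convex region" as ordinary convexity of $u_0$. This holds because $u_0$ is bounded below by $\overline{u} - C_0$, which tends to $+\infty$ at $\partial\Omega^2_b$, so the enclosed region is the epigraph of $u_0$. Convexity of the epigraph is equivalent to convexity of the function, and restricting to lines $\{x_2 = \text{const}\}$ yields convex one-variable functions. The untilted case $\theta=0$, where $b=\pi/2$, is covered by the same argument. I expect no real obstacle here; the whole proof reduces to the Liouville-type fact that a convex function on a line that is bounded above is constant.
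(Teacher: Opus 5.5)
Your proposal is correct and follows essentially the same route as the paper. Both arguments restrict to lines $\{x_2=\text{const}\}$ and use one-variable convexity together with the $C^0$ bound to rule out any slope other than $\tan\theta$. The paper phrases this through monotonicity of the a.e.-defined derivative $\partial_{x_1}u_0$. You subtract the linear part and use the equivalent fact that a convex function on $\mathbf{R}$ that is bounded above is constant, which is a slightly cleaner packaging of the same step.
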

\begin{proof}[Proof of lemma \ref{splitting off of intial data}]
For each $x_2 \in (-b, b)$, we consider
\begin{equation}
    \mathbf{R}\ni x_1 \to \phi_{x_2}(x_1) = \frac{\partial u_0}{\partial x_1}(x_1,x_2).
\end{equation}
Note that by convexity, $\phi_{x_2}$ is well defined for almost every $x_1$, for each $x_2 \in (-b,b)$. Then to prove \eqref{splitting off line}, it is enough to show that 
\begin{equation}
    \phi_{x_2}(x_1) = \tan \theta \text{ whenever it is well defined}.
\end{equation}
If for some $a \in \mathbf{R}$, $x_2 \in (-b,b)$, $\phi_{x_2}(a) = \tan \theta + \delta >  \tan\theta$, then convexity implies that 
\begin{equation}
    \phi_{x_2}(x_1) \geq \tan\theta + \delta \text{ for a.e }x_1 \geq a.
\end{equation}
Then by integrating, we see that
\begin{equation}
    u_0(x_1,x_2) - u_0(a, x_2) \geq (\tan \theta + \delta)(x_1 - a) \text{ for all }x_1 \geq a.
\end{equation}
On the other hand, by \eqref{C0 closeness 2d}, we have
\begin{equation}
    u_0(x_1,x_2) - u_0(a, x_2) \leq (\tan \theta )(x_1 - a) + 2C_0 \text{ for all }x_1 \geq a.
\end{equation}
This is a contradiction when $x_1 - a$ is large. The case when $\phi_{x_2}(a) = \tan \theta - \delta < \tan\theta$ is treated similarly. Thus, we ultimately have 
\begin{equation}
 \phi_{x_2}(x_1) = \tan \theta \text{ whenever it is well defined}. 
\end{equation}
Thus by integration, we have
\begin{equation}
    u_0(x_1,x_2) = u_0(0, x_2) + x_1\tan\theta,
\end{equation}
completing the proof of lemma \ref{splitting off of intial data}.
\end{proof}
We now show how one can construct a convex graphical solution to mean curvature flow on which the Harnack inequality holds, provided the initial data is a graph of a convex, proper function.  
\begin{lemma}\label{existence of convex evolution when deltawing}
Let $n\geq 1$, $b > 0$, and $\Omega^n_b = \mathbf{R}^{n-1}\times(-b,b)$, $u_0 \in C^0(\Omega^n_b)$ so that it is convex. Assume further that $u_0$ is proper, i.e the preimage of any compact set is compact. Then there exists $u \in C^{\infty}(\Omega^n_b \times (0, \infty)) \cap C^0(\Omega^n_b\times [0, \infty))$ so that $u$ solves the initial value problem
\begin{equation}
\begin{cases}
    \frac{\partial u}{\partial t} = \sqrt{1 + |Du|^2}\textit{\emph{div}}(\frac{Du}{\sqrt{1 + |Du|^2}}) \text{ in }\Omega^n_b \times (0, \infty) \\
    u(x,0) = u_0(x) \text{ in }\Omega^n_b.
\end{cases}
\end{equation}
The graphs of $u(\cdot, t)$ are complete, embedded, convex hypersurfaces in $\mathbf{R}^{n+1}$. Moreover, Hamilton's Harnack inequality holds on the graphs of $u(\cdot, t)$.
\end{lemma}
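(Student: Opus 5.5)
Following the idea of \cite{Daskalopoulos2021UniquenessOE}, I will construct the solution as a local smooth limit of compact convex flows. Since $u_0$ is convex and proper, for each $j \in \mathbf{N}$ the sublevel set $\{x \in \Omega^n_b : u_0(x) \leq j\}$ is compact and convex. Let $K_j \subset \mathbf{R}^{n+1}$ be the convex body obtained by reflecting the region $\{(x, x_{n+1}) : u_0(x) \leq x_{n+1} \leq j\}$ across the hyperplane $\{x_{n+1} = j\}$, and set $N^j_0 = \partial K_j$. Then $N^j_0$ is a compact convex hypersurface (possibly only Lipschitz), and its lower half is $\text{Graph}(u_0|_{\{u_0 \leq j\}})$. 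I will evolve $N^j_0$ by mean curvature flow. By Huisken's theorem, after an instantaneous smoothing (approximate $K_j$ by smooth uniformly convex bodies from inside and outside and use avoidance), the flow $\{N^j_t\}_{t \in [0,T_j)}$ is smooth for $t>0$, strictly convex, and compact until it shrinks to a point at time $T_j$. Hamilton's Harnack inequality (Theorem \ref{Hamilton's Harnack inequality}) applies to each $N^j_t$ with $\alpha = 0$; for non-smooth initial data it is applied from time $\epsilon$ onward and then $\epsilon \to 0$ is taken.

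Next I need uniform control in $j$. Because $u_0 \geq \min u_0 > -\infty$ (convex and proper) and the lower half of $N^j_0$ coincides with $\text{Graph}(u_0)$ on $\{u_0 < j\}$, local comparison shows that the lower part of $N^j_t$ near any fixed compact set stays graphical. Concretely, on any $K_{r,\lambda}\times[0,T]$ with $j$ large, I will use the ancient pancake barriers from Lemma \ref{C^0 estimate} (placed below and above inside the region) and a large sphere placed above. These show that the lower sheet of $N^j_t$ over $K_{r,2\lambda}$ is the graph of a function $u^j(\cdot,t)$ with $|u^j| \leq \|u_0\|_{C^0(K_{r+R,\lambda})} + Q$, and that the upper sheet stays above height $j/2$ for $t\leq T$. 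The extinction time satisfies $T_j \to \infty$, since $K_j$ contains balls of radius $\to$ the inradius of the slab, or since the pancake of Definition \ref{def of ancient pancake} fits inside $K_j$ for as long as we like.

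With the $C^0$ bound, Lemmas \ref{gradient estimate} and \ref{higher order derivative estimate} give uniform $C^k_{loc}(\Omega^n_b \times (0,\infty))$ bounds on $u^j$, so a subsequence converges to a smooth $u$ solving the graphical equation. Continuity at $t=0$ follows exactly as in the proof of Theorem \ref{Graphical solution existence}, by comparison with small spheres. The behaviour near $\partial\Omega^n_b$ is handled through properness: since $u_0 \to \infty$ at the boundary and at infinity in the sense of sublevel sets, the second half of Theorem \ref{Graphical solution existence} gives completeness of $\text{Graph}(u(\cdot,t))$. Convexity passes to local smooth limits, so each $\text{Graph}(u(\cdot,t))$ is convex. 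Similarly, the Harnack quantity $Z(V) \geq 0$ at every point and for every tangent vector is a pointwise inequality in $A$, $\nabla H$ and $\partial_t H$. It therefore survives $C^\infty_{loc}$ convergence, with the $\frac{H}{2t}$ term unaffected.

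The main obstacle is ensuring that the local limit of the lower sheets is the whole flow starting from $\text{Graph}(u_0)$, i.e.\ that no part of the limit comes from the reflected upper sheet or degenerates near $\partial\Omega^n_b$. This requires showing that the upper sheet escapes to height $\gtrsim j$ uniformly on $[0,T]$, and that the lower sheet stays inside $\Omega^n_b\times\mathbf{R}$. For the first, I will use the fact that $K_j$ contains a translated rescaled ancient pancake centred at height about $j$ whose evolution persists for time $\sim j$. For the second, I will use that $N^j_t \subset K_j \subset \Omega^n_b \times \mathbf{R}$, which holds because convex flows shrink.
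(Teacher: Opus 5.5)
Your proposal is correct and follows essentially the paper's route: the same doubled convex bodies (the paper's $D^i_0$), pancake barriers that make the lower sheet graphical over $K_{r,\lambda}$ on $[0,T]$, the a priori estimates of Section \ref{existence of graphical solution section}, and passage of convexity and the Harnack inequality to the local smooth limit. The paper gets graphicality from one rescaled pancake centred at the symmetry height $(0,\dots,0,i)$ together with convexity and reflection symmetry, whereas you use the local barriers of Lemma \ref{C^0 estimate}. Two small fixes: balls in the slab have radius less than $b$ and become extinct by time $b^2/(2n)$, so only your pancake argument gives $T_j\to\infty$. Also, completeness should not be quoted from the second half of Theorem \ref{Graphical solution existence}, which concerns a differently constructed solution; it follows directly from $K_j(t)\subset K_j$, which gives $u(\cdot,t)\geq u_0$ and hence, by properness, $u(\cdot,t)\to\infty$ at $\partial\Omega^n_b$.
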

\begin{proof}[Proof of lemma \ref{existence of convex evolution when deltawing}]
The proof is a slight modification of proof of corollary 5.2 in \cite{Daskalopoulos2021UniquenessOE}. By possibly translating $u_0$ vertically, we may assume that 
\begin{equation}\label{normalization}
\min_{x \in \Omega_b}u_0(x) = 0.    
\end{equation}
Now, for each $i \geq 1$, define

\begin{equation}\label{def of auxilary appx domain}
    D^i_0 = \{(x, x_{n+1}) \in \Omega^n_b\times \mathbf{R}\ | \ u_0(x) \leq x_{n+1} \leq 2i - u_0(x), \ u_0(x) \leq i\} \text{ for }i \geq 1.
\end{equation}

Convexity, and \eqref{normalization} implies that $D^i_0 \neq \emptyset$, and are convex. By the assumption that $u_0$ is proper, $D^i_0$ are compact. Let $M^i_0 = \partial D^i_0$. Then $M^i_0$ are compact, convex, Lipchitz continuous, and reflection symmetric with respect to $\{x_3 = i\}$. Then each $M^i_0$ generates compact, convex solutions to mean curvature flow which are reflection symmetric with respect to $\{x_3 = i\}$, denoted by $\{M^i_t = \partial D^i_t\}_{t \in [0, T_i)}$. \\

Let $\hat{M}_t^i = M_t^i \cap \{x_3 \leq i \}$ be the lower half of $M_t^i$. For each $r > 0$, $\lambda \in (0, b)$, we define
\begin{equation}\label{def of domain2}
    K_{r, \lambda} = (-r, r) \times (-b + \lambda, b - \lambda) \subset \Omega_b.
\end{equation}
We claim that for each $T > 0$, $r > 0$, $\lambda > 0$, $\{\hat{M}^i_t\}_{t \in [0, T]}$ is graphical over $K_{r, \lambda}$. Let for each $i\geq 1$, 
\begin{equation}
    p^i = (0, 0, i) \in \mathbf{R}^3.
\end{equation}
Let $\{\Sigma_t\}_{t \in (-\infty, 0)}$ be the ancient pancake introduced in definition \ref{def of ancient pancake}. For each $\lambda \in (0, b)$, we rescale $\Sigma_t$ by factor of $\frac{2(3b - \lambda)}{3\pi}$, i.e we let
\begin{equation}
    \Tilde{\Sigma}_t=\Tilde{\Sigma}^{\lambda}_t = \frac{2(3b - \lambda)}{3\pi}\Sigma_{(\frac{2(3b - \lambda)}{3\pi})^{-2}t}.
\end{equation}
Then, in view of the \eqref{model pancake asymptotic data}, we can choose $\Tilde{T} = \Tilde{T}(n, \lambda. r, T) > 0$ so large that 
\begin{equation}
    K_{2r, \frac{\lambda}{2}} \subset \pi(\Tilde{\Sigma}_t) \text{ for all }-\Tilde{T} \leq t \leq -\Tilde{T} + T,
\end{equation}
where 
\begin{equation}
    \pi : \mathbf{R}^{n+1} \to \mathbf{R}^n, \ \pi(x_1,..,x_{n+1}) = (x_1,..,x_n).
\end{equation}
Because $u_0$ is proper and convex, we can choose $i_0 = i_0(n, T, r, \lambda, b) > 0$ so that for all $i \geq i_0$, 
\begin{equation}
    \Tilde{\Sigma}_{-\Tilde{T}} + p^i \subset D^i_0.
\end{equation}
Then, by avoidance principle, we have
\begin{equation}
    \Tilde{\Sigma}_{-\Tilde{T} + t} + p^i \subset D^i_t
\end{equation}
for all $t \in [0,T].$ Together with the fact that $M^i_t$ are convex, and reflection symmetric with respect to $\{x_3 = i\}$, we see that $M^i_t$ is well defined up to $t= T$, and $\{\hat{M}^i_t\}_{t \in [0, T]}$ is graphical over $K_{r, \lambda}$. If we let $u^i$ so that $\hat{M}^i_t = \text{Graph}(u^i(\cdot, t))$, then note that since $u^i(x, 0) = u_0(x)$ by construction, and $T, r, \lambda$ are arbitrary, we can apply the apriori estimates obtained in section \ref{existence of graphical solution section}, in particular lemma \ref{higher order derivative estimate} to obtain uniform spacetime interior $C^k$ estimates of $u^i$ independently of $i$. This allows us to take a local smooth limit and obtain a graphical, complete longtime solution to mean curvature flow with $u_0$ as its initial data. Since each $\{M^i_t\}$ are compact, convex solutions to mean curvature flow, the graphical solution must also be convex, and Hamilton's Harnack inequality must hold as well.
\end{proof}

We are now ready to prove theorem \ref{dynamical stability of 2d translators}.
\begin{proof}[Proof of theorem \ref{dynamical stability of 2d translators}, remark \ref{independence of constants 2d translators remark}]
We first use lemma \ref{existence of convex evolution when deltawing} to construct the longtime graphical complete solution to mean curvature flow with $u_0$ as initial data. If $\overline{M}$ is a (tilted) grim reaper plane, then by lemma \ref{splitting off of intial data}, 
\begin{equation}
    u_0(x_1,x_2) = u_0(0, x_2) + x_1\tan\theta \text{ for }\theta = \arccos\frac{\pi}{2b}. 
\end{equation}
Thus, by possibly tilting the graph, and rescaling, we may without loss of generality assume that $\overline{M}$ is a grim reaper plane, in which case, the initial data must be of the form
\begin{equation}
    u_0(x_1,x_2) = u_0(0,x_2), \ -\ln \cos x_2 - C_0 \leq u_0(0, x_2) \leq -\ln\cos x_2 + C_0.
\end{equation}
Then, we may apply lemma \ref{existence of convex evolution when deltawing} to $u_0(x_2) = u_0(0, x_2)$ to obtain
\begin{equation}
    u(x_1,x_2,t) = u(x_2,t).
\end{equation}
In the case $\overline{M}$ is a $\Delta$-wing, then by directly applying lemma \ref{existence of convex evolution when deltawing} to $u_0$, we can construct the longtime solution $u$. Note that in both cases, $M_t = \text{Graph}(u(\cdot, t))$ is convex, complete, and the Harnack inequality holds. \\

We now follow our general strategy \ref{generalstrategy}. For each $t_j \to \infty$, we consider 
\begin{equation}
    t \to M^j_t = M_{t + t_j} - t_je_3, \ \ t \in [-t_j, \infty).
\end{equation}
Then by assumption \eqref{C0 closeness 2d}, if we let $M^j_t = \text{Graph}(u^j(\cdot, t))$, then
\begin{equation}\label{C0 closeness of limit flow 2d}
    \overline{u}(x) + t - C_0 \leq u^j(x, t) \leq \overline{u}(x) + t + C_0.
\end{equation}
Once again, the two translating solutions acts as barriers and provide a uniform $C^0$ estimate for $u^j$ independent of $j$. Then by lemma \ref{higher order derivative estimate}, we have uniform $C^k$ estimates for $u^j$, hence we can take a subsequential limit $M^{\infty}_t = \text{Graph}(u^{\infty}(\cdot, t))$. Let us abuse notation, and denote the subsequence by $t_j$.\\

It is clear that $M^{\infty}_t$ is complete, convex, graphical solution to mean curvature flow on which Hamilton's Harnack inequality holds. Thus, by applying Harnack inequality \eqref{ancient harnack inequality} with 
\begin{equation}
    V = Hve_3^{tan},
\end{equation}
where $H,v$ are given in definition \ref{def of commonly used quantities}, we have
\begin{equation}\label{acceleration of bulk velocity 2d}
    \partial_t^2u^{\infty} \geq 0.
\end{equation}
We claim that 
\begin{equation}
\partial_tu^{\infty} = 1.    
\end{equation}
If there exists $x_0 \in M^{\infty}_{t_0}$ so that
\begin{equation}
    \partial_tu^{\infty}(x_0, t_0) = 1 + \delta > 1,
\end{equation}
then by \eqref{acceleration of bulk velocity 2d}, we have
\begin{equation}
    \partial_tu^{\infty}(x_0, t) \geq  1 + \delta \text{ for all }t \geq t_0.
\end{equation}
By integrating in $t$, we have
\begin{equation}
    u^{\infty}(x_0, t) - u^{\infty}(x_0, s) \geq (1 + \delta)(t - s) \text{ for all }t > s \geq t_0.
\end{equation}
On the other hand, by \eqref{C0 closeness of limit flow 2d}, we have
\begin{equation}
    u^{\infty}(x_0, t) - u^{\infty}(x_0, s) \leq (t - s)  +  2C_0\text{ for all }t > s \geq t_0.
\end{equation}
which is a contradiction for large $t - s$. Thus $\partial_tu^{\infty}\leq 1$. The reverse inequality is obtained similarly. Hence we see that $\partial_tu^{\infty} = 1$, i.e $M^{\infty}_0$ is a graphical translator over $\Omega^2_b$. \eqref{C0 closeness of limit flow 2d} implies that $M^{\infty}_0$ is not contained in $\Omega^2_{b'} \times \mathbf{R}$ for $b' < b$. Then, by the classification result in \cite{Hoffman2018GraphicalTF}, $M^{\infty}_0$ is either (tilted) grim reaper plane, or a $\Delta$-wing defined over $\Omega^2_b$. It is clear that one type of translator cannot lie in between two copies of another, hence $M^{\infty} = \overline{M}$ up to translation. \\

Because of the upper and lower barriers
\begin{equation}\label{c0 bound for limit flow in 2d translator case}
    \overline{u}(x) - C_0 \leq u^{\infty}(x, 0) \leq \overline{u}(x) + C_0,
\end{equation}
there cannot be translation in $e_2$ direction. Thus, $u^{\infty}(\cdot, 0)$ must be of the form
\begin{equation}
    u^{\infty}(x_1, x_2, 0) = \overline{u}(x_1 + c_1, x_2) + c_0
\end{equation}
for some $c_0, c_1 \in \mathbf{R}$. \\

If $\overline{M}$ is a (tilted) grim reaper plane, then there is no loss of generality in taking $c_1 = 0$. Then by \eqref{c0 bound for limit flow in 2d translator case}, we have
\begin{equation}\label{subconvergence2dgrimreaperplane}
    \lim_{j\to \infty}u(x, t+t_j) - t_j = \overline{u}(x) + c_0 +t\text{ in }C^{\infty}_{loc}(\Omega^2_b\times \mathbf{R}) \text{ for some }c_0 \leq |C_0|.
\end{equation}
If $\overline{M}$ is a $\Delta$-wing, then we have
\begin{equation}\label{subconvergence2ddeltawing}
    \lim_{j\to \infty}u(x, t+t_j) - t_j = \overline{u}(x + c_1e_1) + c_0 +t\text{ in }C^{\infty}_{loc}(\Omega^2_b\times \mathbf{R}) \text{ for some }c_0, c_1 \in \mathbf{R}
\end{equation}
with
\begin{equation}\label{bound for amount of translation for deltawing}
    |c_0| + \tan\theta|c_1| \leq C_0 \text{ where }\theta = \arccos\frac{\pi}{2b}.
\end{equation}
The bound \eqref{bound for amount of translation for deltawing} is derived as follows. Assume the bound is not true. By reflection in $x_1$ variable, we may without loss of generality assume that $c_1 \geq 0$. We first consider when $c_0 \geq 0$. Then by theorem 1.5 in \cite{spruck2020complete} and \eqref{c0 bound for limit flow in 2d translator case}, we have
\begin{equation}
    \lim_{x_1 \to \infty}\overline{u}(x_1 + c_1, 0) + c_0 - \overline{u}(x_1, 0) = \tan\theta |c_1| + |c_0| \leq C_0
\end{equation}
which is a contradiction. The case when $c_0 \leq 0$ is dealt with similarly by looking $x_1 \to -\infty$ instead.\\

We now prove remark \ref{independence of constants 2d translators remark} by showing that $c_0, c_1$ are independent of choice of subsequence provided
\begin{equation}
    \limsup_{j \to \infty}|t_{j+1} - t_j| < \infty.
\end{equation}
Then there exists a fixed $\alpha > 0$ so that
\begin{equation}\label{bound on subsequence in 2d}
  |t_{j+1} - t_j| \leq \alpha \text{ for all }j.  
\end{equation}
 Since this part of the proof is essentially same for all types of translators, We only prove this for $\Delta$-wing. Choose another sequence $s_j\to \infty$, and let $\Tilde{c}_0, \Tilde{c}_1$ be the constants corresponding to $s_j$, i.e
 \begin{equation}
      \lim_{j\to \infty}u(x, t+s_j) - s_j = \overline{u}(x + \Tilde{c}_1e_1) + \Tilde{c}_0 +t\text{ in }C^{\infty}_{loc}(\Omega^2_b\times \mathbf{R}).
 \end{equation}
 Then set $\Tilde{t}_j$ to be one of $\{t_i\}_{i \in \mathbf{N}}$ which is closest to $s_j$. Then because of \eqref{bound on subsequence in 2d}, $-\alpha \leq s_j - \Tilde{t}_s \leq \alpha$, and $\Tilde{t}_j$ is a subsequence of $t_j$, \eqref{subconvergence2ddeltawing} implies 
\begin{align*}
    c_0 &= \lim_{j\to \infty}u(x, s_j) - \Tilde{t}_j -(s_j-\Tilde{t}_j) - \overline{u}(x+c_1e_1)\\& = \lim_{j \to \infty}u(x, s_j) - s_j - \overline{u}(x + c_1e_1) \\&= \overline{u}(x + \Tilde{c}_1e_1) -\overline{u}(x + c_1e_1)+ \Tilde{c}_0
\end{align*}
for all $x \in \Omega^2_b$. This immediately implies that $c_0 = \Tilde{c}_0$, $c_1 = \Tilde{c}_1$. Thus, the constants $c_0, c_1$ are independent of choice of subsequence, hence we get full convergence
\begin{equation}
    \lim_{t\to \infty}u(x,t) - t = \overline{u}(x + c_1e_1) + c_0 \text{ in }C^{\infty}_{loc}(\Omega_b^2),
\end{equation}
completing the proof of theorem \ref{dynamical stability of 2d translators}, remark \ref{independence of constants 2d translators remark}.
\end{proof}
\section{Dynamical stability of asymptotically cylindrical translators}\label{asymptoticallycylindricalsection}
In this section, we prove dynamical stability of asymptotically cylindrical translators (see definition \ref{def of asymptotically cylindrical translators.}). Note that such translators are fully classified by Bamler-Lai \cite{Bamler2025-pdeodi}, \cite{Bamler2025-thclassification}, and are either the bowl soliton, or entire graph translators constructed in \cite{Hoffman2018GraphicalTF} with possible Euclidean factors. The proof sketch is as follows. We first show that under the given conditions, the forward limit must be an asymptotically cylindrical flow. Then by using the classification result in \cite{Bamler2025-thclassification}, we can deduce that it must be a translating flow with the same velocity as the initially given translator. In the case the initial translator is a bowl soliton with possible Euclidean factors, one can use uniqueness result of bowl solitons \cite{Haslhofer2014UniquenessOT} to show that the two flows are the same up to translation. In the case of flying wing solutions, we show that the two flows are the same up to translation by using the parametrization of space of all non-collapsed translators given in theorem 1.6 in \cite{Bamler2025-thclassification}. \\

We first restate theorem \ref{dynamical stability of cylindrical translators intro} for the reader's convenience. 
\begin{theorem}\label{dynamical stability of asymptotically cylindrical translators section theorem}[Theorem \ref{dynamical stability of cylindrical translators intro}]
Let $n \geq 2$, and $1 \leq k \leq n-1$. Let $\overline{M} = \text{Graph}(\overline{u})$ be an asymptotically $(n,k)$-cylindrical translator (see definition \ref{def of asymptotically cylindrical translators.}). Let $u_0\in C^{2 + \alpha}(\mathbf{R}^n)$ so that
\begin{equation}\label{initial assumption on perturbation asympt cyll}
    (i)\  \|u_0 - \overline{u}\|_{C^0(\mathbf{R}^n)} \leq C_0 < \infty, \ \ \ (ii)\  M_0 = \text{Graph}(u_0) \text{ is mean convex}.
\end{equation}
Then there exists $u \in C^{\infty}(\mathbf{R}^n \times (0, \infty))\cap C^0(\mathbf{R}^n \times [0, \infty))$ so that $u$ solves the initial value problem
\begin{equation}
    \begin{cases}
    \frac{\partial u}{\partial t} = \sqrt{1 + |Du|^2}\textit{\emph{div}}(\frac{Du}{\sqrt{1 + |Du|^2}}) \text{ in }\mathbf{R}^n \times (0, \infty) \\ u(\cdot, 0) = u_0 \text{ in }\mathbf{R}^n.
    \end{cases}
\end{equation}
Furthermore, for each $t_j \to \infty$, there exists subsequence $\{\Tilde{t}_j\} \subset \{t_j\}$ so that
\begin{equation}
    \lim_{j \to \infty}u(x, t + \Tilde{t}_j) - \Tilde{t}_j = \overline{u}(x + p_1)+t + p_2 \text{ in }C^{\infty}_{loc}(\mathbf{R}^n\times \mathbf{R}),
\end{equation}
where $(p_1, p_2) \in \mathbf{R}^n \times \mathbf{R} = \mathbf{R}^{n+1}$. In other words, the perturbed solution converges locally smoothly to the translating solution $\overline{M} + te_{n+1}$ modulo space translation. The amount of translation $(p_1, p_2)$ has norm comparable to $C_0$.
\end{theorem}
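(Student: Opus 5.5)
We follow the general strategy \ref{generalstrategy}. Existence of a longtime entire graphical solution $u$ starting from $u_0$ comes from Ecker--Huisken \cite{eckerhuisken}, \cite{Ecker1991InteriorEF}. Since $u_0\in C^{2+\alpha}$ and $M_0$ is mean convex, we expect $H>0$ to be preserved for $t>0$. We would obtain this by applying the maximum principle to the evolution $\partial_t H=\Delta H+|A|^2H$ on the complete graphical flow. The required curvature bounds on compact time intervals come from the interior estimates, with the $C^0$ barrier $\overline{u}+t\pm C_0$ supplied by avoidance. For $t_j\to\infty$, set $M^j_t=M_{t+t_j}-t_je_{n+1}$, with graph functions $u^j$. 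Avoidance gives
\begin{equation*}
\overline{u}(x)+t-C_0\le u^j(x,t)\le \overline{u}(x)+t+C_0 .
\end{equation*}
Then Lemma \ref{higher order derivative estimate}, in its entire-graph version, gives uniform local $C^k$ bounds. Passing to a subsequence, we obtain an eternal graphical mean convex limit $\{M^\infty_t\}_{t\in\mathbf{R}}=\{\mathrm{Graph}(u^\infty(\cdot,t))\}$ trapped between the two translating copies $\overline{M}+te_{n+1}\pm C_0e_{n+1}$.

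The key step is to show that $\mathcal{M}^\infty$ is asymptotically $(n,k)$-cylindrical in the sense of Definition \ref{def of asymptotically cylindrical translators.}. Fix $\lambda_i\to 0$ and consider the blowdowns $\lambda_i(M^\infty_{\lambda_i^{-2}t})$ for $t<0$. The barriers are $\lambda_i(\overline{M}+\lambda_i^{-2}te_{n+1})\pm\lambda_iC_0e_{n+1}$. By assumption on $\overline{M}$, these converge to the shrinking cylinder $M^{n,k}_{cyl}$, and the offset $\lambda_iC_0\to 0$. So any Brakke-flow subsequential limit of the rescalings of $\mathcal{M}^\infty$ is supported in the limit of the region between the barriers, which collapses onto the cylinder. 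The limit is therefore a flow supported on $M^{n,k}_{cyl}$, possibly with integer multiplicity $m\ge1$, by the constancy theorem applied to a Brakke flow supported on a smooth flow.

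Mean convexity rules out $m\ge2$. The flow $\mathcal{M}^\infty$ is a limit of mean convex smooth flows, and its blowdown is a limit of rescalings of $\mathcal{M}^\infty$. Since $\mathcal{M}^\infty$ is graphical, hence embedded and with $H\ge0$, White's theory of mean convex flows (or the noncollapsing/one-sidedness argument) should give multiplicity one for such limits. In the alternative setting of Corollary \ref{alternative dynamical stability}, we would use monotonicity of entropy instead: a multiplicity-$m$ cylinder has entropy $m\lambda(\overline{M})$, while $\lambda(M^\infty_t)\le\lambda(M_0)<2\lambda(\overline{M})$. Once multiplicity is one, Brakke's/White's local regularity upgrades the convergence to smooth convergence. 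This gives asymptotic cylindricity, and we expect this to be the main obstacle.

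Now Theorem \ref{parametrization of space of solitons by bamler lai} shows that $\mathcal{M}^\infty$ is a translating solution. Its velocity must be $e_{n+1}$, since it stays within bounded distance of $\overline{M}+te_{n+1}$ for all $t\in\mathbf{R}$. Hence $b(\mathcal{M}^\infty)=b(\overline{M})$ after normalizing by a space translation. It remains to show $Q(\mathcal{M}^\infty)=Q(\overline{M})$.
\begin{itemize}
\item If $\overline{M}$ is a bowl times $\mathbf{R}^{k-1}$, then $Q(\overline{M})=0$. The limit must also be of bowl type, which we conclude by Haslhofer's uniqueness \cite{Haslhofer2014UniquenessOT}, or by noting that its $V$ vanishes by the comparison below.
\item Otherwise, we compare type I rescalings. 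Since $\tilde M^\infty_\tau$ and $\tilde{\overline{M}}_\tau$ differ by at most $e^{\tau/2}C_0$ in Hausdorff sense, Proposition \ref{asymptotic bevaior of nkflows}(i),(ii) gives $\|V^\infty(\tau)-\overline{V}(\tau)\|\le C|\tau|^{-3}$, since the exponentially small difference is dominated by the polynomial errors. Lemma \ref{key property of ODE solution} then yields $V^\infty=\overline{V}$, hence $Q(\mathcal{M}^\infty)=Q(\overline{M})$.
\end{itemize}
A subtlety is that the reflection-symmetry normalization requires first translating both flows suitably and aligning the hyperplanes. We would handle this using the bounded distance between the flows together with the rotational ambiguity of the $\mathcal{V}_0$ modes.

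Theorem \ref{parametrization of space of solitons by bamler lai} then gives $\mathcal{M}^\infty=\overline{M}+te_{n+1}$ up to a space translation $(p_1,p_2)$. Finally, $|(p_1,p_2)|$ is controlled by $C_0$ via the trapping inequality above, since a translate of $\overline{M}$ far away cannot stay within vertical distance $C_0$ of $\overline{M}$.
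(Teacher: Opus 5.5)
Your outline matches the paper's. You trap the forward limit between $\overline{M}+te_{n+1}\pm C_0e_{n+1}$, blow it down to show it is asymptotically $(n,k)$-cylindrical, and use Bamler--Lai to get a translator of velocity $e_{n+1}$. You then match $Q$ through the $e^{\tau/2}$-closeness after type I rescaling and the ODE uniqueness lemma.

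\textbf{The gap: multiplicity one.} The step you single out as the main obstacle is asserted rather than proved, and the tools you invoke do not apply as stated. White's multiplicity-one theory is formulated for mean convex flows of compact regions and their blow-ups, not for the blow-down of a noncompact eternal graph. Noncollapsing of $M^\infty_t$ is also not known a priori.

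What closes the step is a one-sided calibration. Since $N^\lambda_t=\lambda M^\infty_{\lambda^{-2}t}$ is a mean convex entire graph, its vertical translates $\{N^\lambda_t+ae_{n+1}\}_{a\in\mathbf{R}}$ foliate $\mathbf{R}^{n+1}$. The translation-invariant extension of its upward unit normal satisfies $\operatorname{div}\nu=-H\le 0$. Applying the divergence theorem on the region between $N^\lambda_t$ and the lower barrier inside $\overline{B(x_0,r)}$ gives
\[
\mathcal{H}^n\bigl(N^\lambda_t\cap \overline{B(x_0,r)}\bigr)\le \mathcal{H}^n\bigl((\lambda\overline{M}_{\lambda^{-2}t}-\lambda C_0e_{n+1})\cap\overline{B(x_0,r)}\bigr)+\mathcal{H}^n(S_\lambda),
\]
where $S_\lambda$ is the part of $\partial B(x_0,r)$ between the two barriers. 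The lower barrier converges smoothly with multiplicity one to $M^{n,k}_t$, and $\mathcal{H}^n(S_\lambda)\to0$. Hence $\mu_t(B(x_0,r))\le\mathcal{H}^n(M^{n,k}_t\cap B(x_0,r))$, so the density is at most $1$ and $m=1$.

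The same calibration, with competitor $\partial B(x_0,r)$, also gives the uniform area-ratio bound. You need that bound before using Brakke compactness and Huisken monotonicity to get a self-similar limit to which the constancy theorem applies. You skipped this, and also skipped checking that the limit is nonzero, which is needed for $m\ge1$.

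\textbf{Smaller points in the classification step.}
\begin{itemize}
\item For $k\ge2$ in the bowl case, Haslhofer's uniqueness applies only after splitting off $\mathbf{R}^{k-1}$. Here $u^\infty(\cdot,0)$ is convex (it is a noncollapsed translator) and lies within $C_0$ of $\overline{u}(\hat x)$. So each $\tilde x\mapsto u^\infty(\hat x,\tilde x)$ is a bounded convex function on $\mathbf{R}^{k-1}$, hence constant.
\item Your alternative via vanishing of $V$ is unavailable, because the Proposition is stated only for non-bowl flows.
\item In the wing case you must first exclude that $M^\infty$ is of bowl type before applying the Proposition to it. The paper does this by rerunning the bowl argument with $M^\infty$ and $\overline{M}$ exchanged.
\item Your worry about aligning reflection hyperplanes is unnecessary. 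Both normalized flows blow down to the same cylinder in the same coordinates. The $O(e^{\tau/2})$ closeness of graph functions on a fixed $D_R$, combined with expansion (i) and finite-dimensionality of $\mathcal{V}_1\oplus\cdots\oplus\mathcal{V}_{-10}$, gives $\|\hat U_0-\overline U_0\|_{L^2_w}\le C|\tau|^{-11}$. The lemma then yields $\hat V=\overline V$ as matrices in those coordinates.
\item Preserving $H\ge0$ via the maximum principle on the complete flow needs global curvature control on $M_0$, which is not assumed. The paper avoids this by approximating with Dirichlet problems on balls, where $\partial_t u=vH$ vanishes on the lateral boundary.
\end{itemize}
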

\begin{remark}
    Note that all asymptotically cylindrical translators are convex, hence mean convex. Thus mean convexity of $M_0$ in theorem \ref{dynamical stability of asymptotically cylindrical translators section theorem} can be viewed as a $C^2$ closeness condition to $\overline{u}$. 
\end{remark}
\begin{remark}\label{independence of constants to subsequence cylindrical remark}
As in the case of theorem \ref{dynamical stability of 2d translators}, it is unclear in general if $(p_1, p_2)$ is independent of choice of (sub)sequence. However, if there exists $\{\Tilde{t}_j\}$ so that
\begin{equation}
    \limsup_{j \to \infty}|\Tilde{t}_{j+1} - \Tilde{t}_j| < \infty,
\end{equation}
then we can show that the constants are independent of choice of (sub)sequence, hence we have full convergence
\begin{equation}
    \lim_{t \to \infty}u(x,t) - t = \overline{u}(x + p_1) + p_2 \text{ in }C^{\infty}_{loc}(\mathbf{R}^n).
\end{equation}
\end{remark}
We first prove corollary \ref{strong convergence to bowl with strong asymptotic behavior}, assuming theorem \ref{dynamical stability of asymptotically cylindrical translators section theorem}. The idea is essentially the same as \cite{bowlstability}, except that we use theorem \ref{dynamical stability of asymptotically cylindrical translators section theorem} to avoid the use of the strong maximum principle in lemma 4.2 in \cite{bowlstability}.
\begin{proof}[Proof of corollary \ref{strong convergence to bowl with strong asymptotic behavior}]
By theorem \ref{dynamical stability of asymptotically cylindrical translators section theorem}, for each $t_j \to \infty$, there exists subsequence $\{\Tilde{t}_j\} \subset\{t_j\}$, and constants $(p_1, p_2) \in \mathbf{R}^n \times \mathbf{R}$ so that
\begin{equation}\label{longtimebehavior}
    \lim_{j \to +\infty}u(x,t + \Tilde{t}_j) - \Tilde{t}_j = \overline{u}(x+p_1) + t+p_2 \text{ in }C^{\infty}_{loc}(\mathbf{R}^n\times \mathbf{R}).
\end{equation}
We first show that $p_1 = 0$. Writing $p_1 = (\hat{p}_1, \Tilde{p}_1) \in \mathbf{R}^{n-k+1}\times \mathbf{R}^{k-1}$, because $\overline{u}$ is independent of the last $k-1$ component, we may simply put $\Tilde{p}_1 = 0$. If $\hat{p}_1 \neq 0$, then we have for each $x = (\hat{x}, 0) \in \mathbf{R}^n$, 
\begin{equation}
    |u(x,\Tilde{t}_j ) - \Tilde{t}_j - \overline{u}(x + \hat{p}_1)| \leq C , \ \ \ |u(x,\Tilde{t}_j) - \Tilde{t}_j - \overline{u}(x)| \leq C
\end{equation}
for some fixed $C > 0$ for all sufficiently large $j$. The first inequality is due to \eqref{longtimebehavior} and the second inequality is due to avoidance principle together with assumption \eqref{strongasymptoticbehaviorassumption}. This implies that 
\begin{equation}\label{bound for bowlsoliton}
    |\overline{u}(x + \hat{p}_1) - \overline{u}(x)| \leq 2C < \infty.
\end{equation}
On the other hand, by recalling the asymptotic behavior of the bowl soliton as $|x| \to \infty$ which is proved in \cite{bowlstability}, we have
\begin{equation}\label{asymptotic behavior of bowl}
    \overline{u}(x) = \overline{u}(\hat{x}, 0)= \frac{|\hat{x}|^2}{2(n-k)} - \ln |\hat{x}| + C + O(|\hat{x}|^{-1}) \text{ as }|\hat{x}| \to \infty.
\end{equation}
This gives a contradiction to inequality \eqref{bound for bowlsoliton} for large $|\hat{x}|$ due to the quadratic growth of $\overline{u}$, hence $\hat{p}_1 = 0$.\\

We now show that with the extra assumption \eqref{strongasymptoticbehaviorassumption}, $p_2 = 0$. To do so, we use the barriers constructed in \cite{bowlstability}.
The assumption \eqref{strongasymptoticbehaviorassumption} implies that for each $\epsilon > 0$, there exists $R > 0$ so that for all $x = (\hat{x}, \Tilde{x})$ with $|\hat{x}| \geq R$
\begin{equation}
    \overline{u}(x) - \epsilon < u_0(x) < \overline{u}(x) + \epsilon.
\end{equation}
Then, we can use the upper/lower half wings constructed in lemma 2.3 in \cite{bowlstability} to construct lower/upper barriers for $u_0$. More specifically, we consider $W^{+}_R\times \mathbf{R}^{k-1}$, $W^{-}_R\times \mathbf{R}^{k-1}$ with $W^{+}_{R}$, $W^-_{R}$ being the `$\epsilon$-shifted $n-k+1$-dimensional lower / upper half wing solutions' in $\mathbf{R}^{n-k+2}$ defined in lemma 4.1 in \cite{bowlstability} with $n-k+1$ instead of $n$. Then by avoidance principle and the fact that $W^{+}_R$, $W^{-}_R$ are asymptotic to $n-k+1$-dimensional bowl soliton as $|\hat{x}| \to \infty$, we see that by possibly making $R > 0$ larger, we have
\begin{equation}\label{asmpytoticclosenessfarried}
    \sup_{t \geq 0, |\hat{x}| \geq R, \Tilde{x}\in \mathbf{R}^{k-1}} |u(x,t) - \overline{u}(x) - t| \leq 2\epsilon.
\end{equation}
Combining with \eqref{longtimebehavior}, this implies that
\begin{equation}
    |p_2| \leq 2\epsilon.
\end{equation}
Since $\epsilon > 0$ is arbitrary, we have $p_2 = 0$, hence proving corollary \ref{strong convergence to bowl with strong asymptotic behavior}.
\end{proof}
We now prove theorem \ref{dynamical stability of asymptotically cylindrical translators section theorem}.
\begin{proof}[Proof of theorem \ref{dynamical stability of asymptotically cylindrical translators section theorem}, remark \ref{independence of constants to subsequence cylindrical remark}]
For convenience, we set 
\begin{equation}
    \overline{M}_t = \overline{M} + te_{n+1}, \ \ t\in \mathbf{R}
\end{equation}
to be the translating solution generated by initially given translator $\overline{M}$. Moreover, by theorem \ref{parametrization of space of solitons by bamler lai}, we may suitably translate $\overline{u}$ and $u_0$ in space so that $\overline{M} \in \textbf{MCF}^{n,k}_{\text{soliton}}$ (see definition \ref{Definition of MCFsoliton}).\\

By classical regularity theory of Ecker and Huisken \cite{eckerhuisken}, \cite{Ecker1991InteriorEF}, one obtains a longtime solution to graphical mean curvature flow with $u_0$ as the initial data by taking the limit of solutions to the following initial-boundary value problem
\begin{equation}
    \begin{cases}
    \frac{\partial u}{\partial t} = \sqrt{1 + |Du|^2}\textit{\emph{div}}(\frac{Du}{\sqrt{1 + |Du|^2}}) \text{ in }B_{\mathbf{R}^n}(0, R) \times (0, \infty) \\ u = u_0 \text{ in }\mathbf{R}^n\times \{0\} \cup \partial B_{\mathbf{R}^n}(0, R)\times (0, \infty)
    \end{cases}
\end{equation}
with $R = 1,2,3,..$. Moreover, since $M_0 = \text{Graph}(u_0)$ is mean convex, by weak maximum principle, we see that the solution we obtain is mean convex for all later time. Let us denote the solution to be 
\begin{equation}
    M_t = \text{Graph}(u(\cdot, t)).
\end{equation}
We now consider the forward limit of the flow, namely for any $t_j \to + \infty$, define
\begin{equation}
    M^j_t = M_{t + t_j} - t_je_{n+1}.
\end{equation}
First note that because of (i) in \eqref{initial assumption on perturbation asympt cyll}, we have
\begin{equation}
    \overline{u}(x) - C_0 \leq u_0(x) \leq \overline{u}(x) + C_0.
\end{equation}
Therefore by avoidance principle \cite{White2024TheAP}, we have
\begin{equation}
    \overline{u}(x) + t - C_0 \leq u(x,t) \leq \overline{u}(x) +t+ C_0
\end{equation}
for all $t \geq 0$. In particular, since
\begin{equation}
    M^j_t = \text{Graph}(u^j(\cdot, t)) \text{ with }u^j(x,t) = u(x, t_j + t) - t_j,
\end{equation}
we have
\begin{equation}\label{C0 bound on appx seq asym cyl}
    \overline{u}(x) + t - C_0 \leq u^j(x,t) \leq \overline{u}(x) +t+ C_0
\end{equation}
Since we have uniform local $C^0$ bound on $u^j$ independent of $j$, by interior regularity theory \cite{Ecker1991InteriorEF}, we also have uniform local $C^m$ bound on $u^j$ independent of $j$. Thus, by possibly passing through subsequence, we can take a local smooth limit
\begin{equation}
    u^{\infty}(x,t) = \lim_{j \to \infty}u^j(x,t)
\end{equation}
which is defined on $\mathbf{R}^n \times \mathbf{R}$. Then the graph
\begin{equation}
    M^{\infty}_t = \text{Graph}(u^{\infty}(\cdot, t))
\end{equation}
is an eternal solution to mean curvature flow. Since the inequality \eqref{C0 bound on appx seq asym cyl} passes through the limit, we have
\begin{equation}\label{C0 bound on forward limit asym cyl} 
    \overline{u}(x) + t - C_0 \leq u^{\infty}(x,t) \leq \overline{u}(x) + t + C_0,
\end{equation}
i.e $M^{\infty}_t$ is trapped between two copies of $\overline{M}_t$.\\

We now claim that $M^{\infty}_t$ is a non-collapsed, translator of velocity $e_{n+1}$. This is immediate once we show that $
\{M^{\infty}_t\}_{t \in (-\infty, \infty)}$ is an asymptotically $(n,k)$-cylindrical flow. Once this is shown, by the recent classification of Bamler-Lai \cite{Bamler2025-thclassification} (see theorem \ref{parametrization of space of solitons by bamler lai}), $M^{\infty}_t$  must be a non-collapsed translating solution. The fact that the velocity is equal to $e_{n+1}$ readily follows from \eqref{C0 bound on forward limit asym cyl}. For example, one can use Hamilton's Harnack inequality \eqref{ancient harnack inequality} as we did in the proof of previous theorems. (Note that we can apply Harnack inequality on asymptotically cylindrical translators since they are complete hypersurfaces with uniformly bounded curvature). \\

To show that $M^{\infty}_t$ is an asymptotically $(n,k)$-cylindrical flow, we follow the ideas of White \cite{White2000TheSO}. We consider the parabolic blowdown of the limit flow, namely for $\lambda \to 0$, define
\begin{equation}
    N^{\lambda}_t = \lambda M^{\infty}_{\lambda^{-2}t} \text{ for }t \in (-\infty, 0).
\end{equation}
First note that since $M_0$ is mean convex, so is $M^{\infty}_t$, and $N^{\lambda}_t$. Moreover, since $N^{\lambda}_t$ is an entire graph, the collection of all translations of $N^{\lambda}_t$ in $e_{n+1}$ direction foliates $\mathbf{R}^{n+1}$. Therefore, by standard calibration argument (see \cite{Haslhofer2013MeanCF}, remark 2.6), we see that $N^{\lambda}_t$ has polynomial volume growth, i.e
\begin{equation}
    \frac{\mathcal{H}^n(N^{\lambda}_t\cap B(x_0,r))}{r^n} \leq c(n) \text{ for all }B(x_0, r) \subset \mathbf{R}^{n+1},
\end{equation}
where $\mathcal{H}^n$ is the $n$-dimensional Hausdorff measure in $\mathbf{R}^{n+1}$. Therefore, one can apply compactness theorem for integral Brakke flows to conclude that there exists a self shrinking tangent flow
\begin{equation}
    \mu_t = \lim_{\lambda \to 0}N^{\lambda}_t
\end{equation}
where the convergence is currently at the level of integral Brakke flow sense. \\

We claim that the tangent flow is a multiplicity one shrinking cylinder $M^{n,k}_{t} = \mathbf{S}^{n-k}(\sqrt{-2(n-k)t})\times \mathbf{R}^k$. We first look at the support of $\mu_t$. Take any $\phi \in C_c^0(\mathbf{R}^{n+1})$ so that 
\begin{equation}
    \text
    {supp}(\phi) \cap M^{n,k}_t = \emptyset.
\end{equation}
Because $M^{\infty}_t$ is trapped between $\overline{M}^+_t = \overline{M}_t + C_0e_{n+1}$ and $\overline{M}^-_t = \overline{M}_t   - C_0e_{n+1}$ due to \eqref{C0 bound on forward limit asym cyl}, the same containment relation remains true after parabolically rescaling by $\lambda$. By assumption that $\overline{M}$ is an asymptotically $(n,k)$-cylindrical translator, we see that both $\overline{M}^+_t, \overline{M}^-_t$ has multiplicity one $M^{n,k}_t$ as its tangent flow at infinity. Therefore by using the rescaled $\overline{M}^+_t, \overline{M}^-_t$ as barriers, we immediately see that for all sufficiently small $\lambda > 0$, 
\begin{equation}
    N^{\lambda}_t \cap \text{supp}(\phi) = \emptyset,
\end{equation}
which implies that 
\begin{equation}
    \text{supp}(\mu_t) \cap \text{supp}(\phi) = \emptyset.
\end{equation}
Since $\phi$ is arbitrary function with support away from $M^{n,k}_t$, 
this implies that 
\begin{equation}
    \text{supp}(\mu_t) \subset M^{n,k}_t.
\end{equation}

To show reverse inclusion, we again use calibration argument. Take any $x_0 \in M^{n,k}_t$, and $r > 0$. We claim that 
\begin{equation}
    \frac{\mu_t(\overline{B(x_0,r)})}{r^n} > 0 
\end{equation}
First note that 
\begin{equation}
    \{\lambda\overline{M}_{\lambda^{-2}t} + ae_{n+1}\}_{a \in \mathbf{R}}
\end{equation}
is a convex foliation of $\mathbf{R}^{n+1}$. Estimate \eqref{C0 bound on forward limit asym cyl} implies that $N^{\lambda}_t$ is trapped between $\lambda\overline{M}_{\lambda^{-2}t} \pm C_0\lambda e_{n+1}$. Since $N^{\lambda}_t$ is an entire graph, and $\lambda\overline{M}_{\lambda^{-2}t} \pm C_0\lambda e_{n+1} \to M^{n,k}_t$ locally smoothly, we can apply standard calibration argument (see \cite{Haslhofer2013MeanCF}, remark 2.6) to compare 
\begin{equation}
\mathcal{H}^n(\lambda\overline{M}_{\lambda^{-2}t} + C_0\lambda e_{n+1}\cap \overline{B(x_0,r)})
\end{equation}
and
\begin{equation}
\mathcal{H}^n(N^{\lambda}_t \cap \overline{B(x_0,r)}) + \mathcal{H}^n(S_{\lambda}\cap \overline{B(x_0,r)})
\end{equation}
where $S_{\lambda}$ is the set of all points on $\partial B(x_0, r)$ which lie between $\lambda\overline{M}_{\lambda^{-2}t} \pm C_0\lambda e_{n+1}$. This implies that
\begin{equation}
  \mathcal{H}^n(N^{\lambda}_t \cap \overline{B(x_0,r)}) + \mathcal{H}^n(S_{\lambda}\cap \overline{B(x_0,r)}) \geq \mathcal{H}^n(\lambda\overline{M}_{\lambda^{-2}t} + C_0\lambda e_{n+1}\cap B(x_0,r)).
  \end{equation}
Therefore, by letting $\lambda \to 0$ and using $x_0 \in M^{n,k}_t$, $\lambda\overline{M}_{\lambda^{-2}t} \pm \lambda C_0 e_{n+1}\to M^{n,k}_t$, we see that 
\begin{equation}
    \mathcal{H}^n(S_{\lambda}\cap \overline{B(x_0, r)}) = \mathcal{H}^n(S_{\lambda}) \to 0 \text{ as }\lambda \to 0,
\end{equation}
thus
\begin{equation}\label{area compartison}
    \mu_t(\overline{B(x_0, r)}) \geq \mathcal{H}^n(M^{n,k}_t \cap B(x_0, r)) > 0.
\end{equation}
This implies that $x_0 \in \text{supp}(\mu_t)$, hence $M^{n,k}_t = \text{supp}(\mu_t)$. \\

Thus, $\mu_t$ is equal to $M^{n,k}_t$ with possible multiplicities. By once again following the ideas of White \cite{White2000TheSO}, we can rule out high multiplicities. Since the argument is straightforward and is very similar to the previous argument, we record it here in detail. Note that $N^{\lambda}_t$ is also a mean convex entire graph. Therefore, we can repeat the previous calibration argument with role of $N^{\lambda}_t$ and $\lambda \overline{M}_{\lambda^{-2}t}$ reversed. More specifically, note that 
\begin{equation}
    \{ N^{\lambda}_t + ae_{n+1}\}_{a \in \mathbf{R}}
\end{equation}
is a mean convex foliation of entire $\mathbf{R}^{n+1}$. Choose any $x_0 \in M^{n,k}_{t}$. Since $N^{\lambda}_t$ is trapped between $\lambda\overline{M}_{\lambda^{-2}t} \pm \lambda C_0 e_{n+1}$, by standard calibration argument, we can compare
\begin{equation}
    \mathcal{H}^n(N^{\lambda}_t \cap \overline{B(x_0, r)})
\end{equation}
and
\begin{equation}
\mathcal{H}^n(\lambda\overline{M}_{\lambda^{-2}t} - C_0\lambda e_{n+1}\cap \overline{B(x_0, r)}) + \mathcal{H}^n(S_{\lambda}),
\end{equation}
where $S_{\lambda}$ is the set of all points on $\partial B(x_0, r)$ which lie between $\lambda \overline{M}_{\lambda^{-2}t} \pm C_0\lambda e_{n+1}$. This gives us
\begin{equation}
    \mathcal{H}^n(\lambda\overline{M}_{\lambda^{-2}t} - C_0\lambda e_{n+1}\cap \overline{B(x_0, r)}) + \mathcal{H}^n(S_{\lambda}) \geq \mathcal{H}^n(N^{\lambda}_t \cap \overline{B(x_0, r)}).
\end{equation}
Then after taking the limit $\lambda \to 0$, since $\mathcal{H}^n(S_{\lambda}) \to 0$, and $\overline{M}$ is an asymptotically cylindrical translator, we obtain a reverse inequality to \eqref{area compartison}, namely
\begin{equation}
 \mu_t(B(x_0, r)) \leq \mathcal{H}^n(M^{n,k}_t \cap B(x_0, r))   
\end{equation}
for all $r > 0$. Thus, by dividing by $\omega_nr^n$ where $\omega_n$ is the volume of an $n$-dimensional unit ball, letting $r \to 0$ and using $x_0 \in M^{n,k}_t$, we see that the $n$-dimensional Hausdorff density of $\mu_t$ at $x_0 \in M^{n,k}_t$ is bounded by 1. Since $\mu_t$ is an integral $n$-varifold with support $M^{n,k}_t$, this implies that $\mu_t$ is equal to multiplicity one $M^{n,k}_t$. Thus by local regularity theorem \cite{White2005-nl}, the convergence is smooth, hence $M^{\infty}_t$ is an asymptotically $(n,k)$-cylindrical flow, hence it must be a non-collapsed, convex translator of velocity $e_{n+1}$. In particular, we can write
\begin{equation}
    u^{\infty}(x,t) = u^{\infty}(x,0) + t = u^{\infty}(x) + t.
\end{equation}

We first consider the case when $  \overline{M}$ is a $n-k+1$ dimensional Bowl soliton times $\mathbf{R}^{k-1}$. This means 
\begin{equation}
    \overline{u}(x_1,..,x_n) = \overline{u}(x_1,..,x_{n-k+1}).
\end{equation}
The convexity of $u^{\infty}(x)$ together with
\begin{equation}
    \overline{u} - C_0 \leq u^{\infty}(x) \leq \overline{u}(x) + C_0
\end{equation}
implies that $u^{\infty}$ is also independent of $(x_{n-k+2},..,x_n)$, i.e $M^{\infty}$ splits off the same $\mathbf{R}^{k-1}$ factor as $\overline{M}$. Then, we see that the lower dimensional translator factor of $M^{\infty}$ is asymptotic to the round cylinder 
\begin{equation}
    \mathbf{S}^{n-k}(\sqrt{2(n-k)})\times \mathbf{R} 
\end{equation}
in $\mathbf{R}^{n-k+2}$. By uniqueness of bowl soliton proved in \cite{Haslhofer2014UniquenessOT}, the lower dimensional translator factor must also be a $n-k+1$-dimensional bowl soliton, hence we see that 
\begin{equation}
    M^{\infty} = \overline{M} + p_0
\end{equation}
for some $p_0 \in \mathbf{R}^{n+1}$.\\

We now consider the remaining case when $\overline{M}$ is a flying wing soliton constructed in \cite{Hoffman2018GraphicalTF} with possible Euclidean factor . In this case, $M^{\infty}$ cannot be a bowl soliton with a Euclidean factor. Indeed, if $M^{\infty}$ was a bowl soliton with a Euclidean factor, then by reversing the role of $M^{\infty}$ and $\overline{M}$, and viewing $M^{\infty}$ as the barrier, previous argument implies that $\overline{M}$ also has to be a bowl soliton with a Euclidean factor which is not the case. Thus, both $M^{\infty}$ and $\overline{M}$ are flying wing solitons with possible Euclidean factors, in particular we can apply proposition \ref{asymptotic bevaior of nkflows} to both $M^{\infty}_t$ and $\overline{M}_t$.\\

We now claim that $M^{\infty} = \overline{M} + p_0$ for some constant $p_0 \in \mathbf{R}^{n+1}$. First, by theorem \ref{parametrization of space of solitons by bamler lai}, there exists some $p_0 \in \mathbf{R}^{n+1}$ so that the translated flow $\hat{M}_t = M^{\infty}_t + p_0$ belongs to $\textbf{MCF}^{n,k}_{\text{soliton}}$. Then by theorem \ref{parametrization of space of solitons by bamler lai}, we can show that $\hat{M} = \overline{M}$ by showing that $Q(\hat{M}) = Q(\overline{M})$, and $b(\hat{M}) = b(\overline{M})$ (see proposition \ref{asymptotic bevaior of nkflows}, definition \ref{definition of parameter Q,b in bamler lai}). Since both $\hat{M}$, and $\overline{M}$ have velocity $e_{n+1}$, by definition \ref{definition of parameter Q,b in bamler lai}, we see that
\begin{equation}
    b(\hat{M}) = \frac{e_{n+1}}{|e_{n+1}|^2} = e_{n+1} = b(\overline{M}).
\end{equation}
Thus it remains to show that 
\begin{equation}
    Q(\hat{M)} = Q(\overline{M}).
\end{equation}
The rough idea is that $Q$ determines the asymptotic expansion of the graph function of the rescaled mean curvature flow over the cylinder $M^{n,k}_{-1} = \mathbf{S}^{n-k}(\sqrt{2(n-k)})\times \mathbf{R}^k$ as $\tau \to -\infty $ up to an error of order $|\tau|^{-3}$ where $\tau$ is the rescaled time parameter (see proposition \ref{asymptotic bevaior of nkflows}). On the other hand, if we denote
\begin{equation}
    \hat{N}_{\tau} = e^{\tau/2}\hat{M}_{-e^{-\tau}}, \ \ \ \overline{N}_{\tau} = e^{\tau/2}\overline{M}_{-e^{-\tau}},
\end{equation}
then because before rescaling, $\hat{M}$ is trapped between two copies of $\overline{M}$ which are apart from each other by a fixed amount for all time slices, similar containment relation also holds after type I rescaling. However, because of the rescaling factor, we see that $\hat{N}_{\tau}$ is trapped between two copies of $\overline{N}_{\tau}$ which are apart from each other by at most $C_0e^{\tau/2}$. In particular, this implies that the difference between two graph functions must be exponentially decaying. This implies that the leading order terms must in fact coincide, i.e $Q(\hat{M}) = Q(\overline{M})$.\\

We make above argument precise. Recalling the definition of $\hat{M}$, and \eqref{C0 bound on forward limit asym cyl}, we see that for each $t \in \mathbf{R}$, $\hat{M}_t$ is trapped between
$\overline{M}_t + p_0 -C_0e_{n+1}$, and $\overline{M}_t + p_0 + C_0e_{n+1}$. Therefore, after type I rescaling, we see that $\hat{N}_{\tau}$ is trapped between $\overline{N}_{\tau} + e^{\tau/2}(p_0 - C_0e_{n+1})$ and $\overline{N}_{\tau} + e^{\tau/2}(p_0 + C_0e_{n+1})$. Note that the two barriers $\overline{N}_{\tau} + e^{\tau/2}(p_0 - C_0e_{n+1})$ and $\overline{N}_{\tau} + e^{\tau/2}(p_0 + C_0e_{n+1})$ are space translations of $\overline{N}_{\tau}$ with the norm of the translation vector bounded by $C_1e^{\tau/2}$ for some fixed uniform constant $C_1$ for all $\tau$. By combining this with the fact that $\hat{N}_{\tau}, \overline{N}_{\tau}$ converges to $M^{n,k}_{-1}$ locally smoothly, we see that for each fixed $R > 0$, we have
\begin{equation}\label{exponential decay of difference of graph function}
    \|\hat{u}(\cdot, \tau) - \overline{u}(\cdot, \tau)\|_{C^0(D_R)} \leq 10C_1e^{\tau/2}
\end{equation}
for all sufficiently small $\tau$, where $D_R = B_{\mathbf{R}^n}(0, R) \cap M^{n,k}_{-1}$.\\

We now apply proposition \ref{asymptotic bevaior of nkflows} to both $\hat{M}$ and $\overline{M}$. Then by possibly taking smaller $\Tilde{\tau}$, there exists $\hat{U}^+ = \hat{U}_1 + \hat{U}_{1/2} + \hat{U}_0 + .. + \hat{U}_{-10},\  \overline{U}^+ = \overline{U}_1 + \overline{U}_{1/2}+ \overline{U}_0 + .. + \overline{U}_{-10}$ with each $\hat{U}_{\lambda}, \overline{U}_{\lambda} \in \mathcal{V}_{\lambda} \subset L^2_{w}(M^{n,k}_{-1})$, so that for each $R > 0$
\begin{equation}\label{asymptotic expansion for each graph function}
    \|\hat{u}(\cdot \tau) - \hat{U}^+(\tau)\|_{C^{0}(D_R)}, \|\overline{u}(\cdot \tau) - \overline{U}^+(\tau)\|_{C^{0}(D_R)} \leq \frac{1}{10}(\Tilde{\tau} - \tau + 10)^{-11}
\end{equation}
for all sufficiently small $\tau \leq \Tilde{\tau}.$ Here, $L^2_w$ denotes the usual Gaussian weighted $L^2$ space over $M^{n,k}_{-1}$, and $\mathcal{V}_{\lambda}$ denote the $\lambda$-eigenspace of the linearized rescaled mean curvature flow operator at $M^{n,k}_{-1}$. Then, by combining \eqref{exponential decay of difference of graph function} and \eqref{asymptotic expansion for each graph function}, we see that
\begin{equation}\label{diff of asymptotic expansion}
    \|\hat{U}^+(\tau) - \overline{U}^+(\tau)\|_{C^0(D_R)} \leq C|\tau|^{-11}
\end{equation}
for each $R > 0$ for all sufficiently small $\tau$. Here, $C$ is independent of $R > 0$ and $\tau$.\\

Since both $\hat{U}^+(\tau)$, and $\overline{U}^+(\tau)$ are just finite sum of the eigenfunctions of the linearized rescaled mean curvature flow operator which are Hermite polynomials, we see that \eqref{diff of asymptotic expansion}, together with orthogonality of the eigenfunctions, imply that
\begin{equation}\label{diff of neutralmode}
    \|\hat{U}_0(\tau) - \overline{U}_0(\tau)\|_{L^2_w} \leq C|\tau|^{-11}
\end{equation}
for all sufficiently small $\tau$. Moreover, again by proposition \ref{asymptotic bevaior of nkflows}, we can find two smooth curves in $\mathcal{V}_{0}$ denoted by $\hat{V}(\tau)$, $\overline{V}(\tau)$ so that (i) $\hat{V}(\tau), \overline{V}(\tau)\to 0 \text{ as }\tau \to -\infty$, (ii) the coefficients of $\hat{V}(\tau), \overline{V}(\tau)$ are longtime solutions to the ODE \eqref{the ode solved by appx asymptotic expansion},  
\begin{equation}\label{Q polynomial determined}
    \textup{(iii)}\ \ \|\hat{U}_0(\tau) - \hat{V}(\tau)\|_{L^2_{w}}, \|\overline{U}_0(\tau) - \overline{V}(\tau)\|_{L^2_w} \leq C|\tau|^{-3}
\end{equation}
for all sufficiently small $\tau$, and (iv) $\hat{V}(\tau), \overline{V}(\tau)$ determines $Q(\hat{M)}, Q(\overline{M})$ respectively. Then by combining \eqref{diff of neutralmode} and \eqref{Q polynomial determined}, we see that 
\begin{equation}\label{cubic bound on diff of Q-poly}
    \|\hat{V}(\tau) - \overline{V}(\tau)\|_{L^2_w} \leq C|\tau|^{-3}
\end{equation}
for all sufficiently small $\tau$. Then by lemma \ref{key property of ODE solution}, \eqref{cubic bound on diff of Q-poly} implies that $\hat{V}(\tau) = \overline{V}(\tau)$, hence $Q(\hat{M}) = Q(\overline{M})$. Therefore, we ultimately have $Q(\hat{M}) = Q(\overline{M})$, and $b(\hat{M}) = b(\overline{M})$, hence by theorem \ref{parametrization of space of solitons by bamler lai}, we have $\hat{M} = \overline{M}$, hence 
\begin{equation}
    M^{\infty}_t = \overline{M} + te_{n+1} + p_0
\end{equation}
for some fixed $p_0 \in \mathbf{R}^{n+1}$. Writing $p_0 = (-p_1, p_2) \in \mathbf{R}^n \times \mathbf{R}$, this is equivalent to
\begin{equation}
    \lim_{j \to +\infty}u(x, t + t_j) - t_j = \overline{u}(x + p_1) + t + p_2.
\end{equation}
To show that the norm of $p_0 = (p_1, p_2)$ is comparable to $C_0$, first note that if $\overline{M}$ (hence $M^{\infty}$) has a Euclidean factor, we may simply take the components of $p_1$ in that factor to be $0$. Thus we might as well assume that $\overline{M}$ is strictly convex. Then the comparability of $|p_0|$ and $C_0$ follows from the strict convexity of $\overline{M}$, and the fact that $M^{\infty}_t = \overline{M} + te_{n+1} + p_0$ has to be trapped between 
$\overline{M} + te_{n+1} + C_0e_{n+1}$ and $\overline{M} + te_{n+1} - C_0e_{n+1}$.\\

Finally, we prove remark \ref{independence of constants to subsequence cylindrical remark}. Assume that 
\begin{equation}
    \lim_{j \to +\infty}u(x, t + t_j) - t_j = \overline{u}(x + p_1) + t + p_2.
\end{equation}
for some $p_0 = (p_1, p_2)$ with 
\begin{equation}\label{BOUND ON SUBSEQUENCE CYL}
    |t_{j+1} - t_j| \leq \alpha \text{ for all }j\text{ for some }\alpha > 0.
\end{equation}
Choose any other subsequence $s_j$, and let $\Tilde{p}_0 = (\Tilde{p}_1, \Tilde{p}_2)$ be the constants corresponding to $s_j$. Define $\Tilde{t}_j$ to be one of $\{t_i\}_{i \in \mathbf{N}}$ which is closest to $s_j$. Then because of \eqref{BOUND ON SUBSEQUENCE CYL}, we have $-\alpha \leq s_j - \Tilde{t}_j \leq \alpha$, hence the local smooth convergence implies 
\begin{align*}
     p_2 &= \lim_{j \to \infty}u(x, s_j-\Tilde{t}_j + \Tilde{t}_j) - s_j - \overline{u}(x + p_1) \\& = \lim_{j \to \infty}u(x, s_j) - s_j - \overline{u}(x + p_1) \\ & = \overline{u}(x + \Tilde{p}_1) - \overline{u}(x+p_1) + \Tilde{p}_2
\end{align*}
for all $x \in \mathbf{R}^n$. By taking some components of $p_1$, $\Tilde{p}_1$ to vanish whenever $\overline{M}$ has a Euclidean factor, strict convexity of $\overline{u}$ with respect to remaining components imply that $p_1 = \Tilde{p}_1$, and $p_2 = \Tilde{p}_2$. This shows that the constants are independent of choice of (sub)sequence, hence we get full convergence
\begin{equation}
    \lim_{t \to \infty}u(x, t) - t= \overline{u}(x + p_1) + p_2 \text{ in }C^{\infty}_{loc}(\mathbf{R}^n).
\end{equation}
\end{proof}
We now prove corollary \ref{alternative dynamical stability}. The proof is a slight modification of proof of theorem \ref{dynamical stability of asymptotically cylindrical translators section theorem}, so we only point out the necessary changes.
\begin{proof}[Proof of corollary \ref{alternative dynamical stability}]
The proof is essentially that of theorem \ref{dynamical stability of asymptotically cylindrical translators section theorem} with slight modifications when proving that $M^{\infty}_t$ is asymptotically $(n,k)$-cylindrical. First, by the entropy bound in (ii) of \eqref{alternative assumption on initial data cylindrical flows}, together with the fact that entropy is non-increasing under mean curvature flow, we see that 
\begin{equation}\label{entropy bound}
    \lambda(M^{\infty}_t) \leq \lambda(M_0) < 2\lambda(\overline{M}) \text{ for all }t \in \mathbf{R}.
\end{equation}
This implies that one can still consider the tangent flow at infinity
\begin{equation}\label{tangent flow at infinity alternative}
    \mu_t = \lim_{\lambda \to 0}N^{\lambda}_t,
\end{equation}
with $N^{\lambda}_t = \lambda M^{\infty}_{\lambda^{-2}t}$. $\mu_t$ is currently just an integral $n$-Brakke flow. This tangent flow is self shrinking flow, i.e. $\mu_t = \sqrt{-t}\mu_{-1}$, with $\mu_{-1}$ being the pushforward of a $F$-stationary integral varifold, denoted by $V(-1)$.
\\

By following the same arguments in proof of theorem \ref{dynamical stability of asymptotically cylindrical translators section theorem}, we can show that
\begin{equation}
    \textup{supp}(\mu_t) = M^{n,k}_t.
\end{equation}
Note that in this part of the proof, we did not use mean convexity, hence above conclusion is still valid under our alternative assumptions \eqref{alternative assumption on initial data cylindrical flows}. Then since we know that the support of a $F$-stationary varifold $V(-1)$ is equal to $M^{n,k}_{-1}$, which is a connected, smooth, embedded hypersurface, by constancy theorem for $F$-stationary integral varifolds (see section 8.4 in \cite{simon1984lectures}), we see that 
\begin{equation}
    \mu_{-1}= m\mathcal{H}^n|_{M^{n,k}_{-1}}
\end{equation}
for some positive integer $m$. The entropy bound \eqref{entropy bound} immediately implies that $m = 1$, i.e $\mu_t$ has multiplicity one. Local regularity theorem \cite{White2005-nl} implies that the convergence \eqref{tangent flow at infinity alternative} is actually smooth, meaning $M^{\infty}_t$ is an asymptotically $(n,k)$-cylindrical flow. Then by following the remaining proof of theorem \ref{dynamical stability of cylindrical translators intro}, we obtain corollary \ref{alternative dynamical stability}.
\end{proof}
\section*{Acknowledgment}
The author would like to thank his advisor Prof. Nata\v sa \v Se\v sum for her various suggestions and comments to improve the results of this paper.
\section*{Statements and Declarations}
\begin{itemize}
    \item Competing interests : The author has no competing interests to declare that are
relevant to the content of this article.
\end{itemize}
\section*{Data availability statements}
Data sharing not applicable to this article as no datasets were generated or analyzed
during the current study
\printbibliography
\end{document}